\newcommand{\Ao}{\ensuremath{\mathbb{A}^1}}
\newcommand{\Spec}{\ensuremath{\operatorname{Spec}}}
\newcommand{\smk}{\ensuremath{\operatorname{Sm}_k}}
\newcommand{\set}{\ensuremath{\mathcal{S}et}}
\newcommand{\hofib}{\ensuremath{\operatorname{hofib}}}
\newcommand{\Nis}{\ensuremath{\operatorname{Nis}}}
\newcommand{\Nil}{\ensuremath{\operatorname{Nil}}}
\newcommand{\tor}{\ensuremath{\operatorname{tor}}}
\newcommand{\classify}[1]{\ensuremath{B\haut({#1})}}
\newcommand{\haut}{\ensuremath{\operatorname{hAut}_\bullet}}
\newtheorem{mainthm}{Theorem}
\newtheorem{theorem}{Theorem}[section]
\newtheorem{definition}[theorem]{Definition}
\newtheorem{lemma}[theorem]{Lemma}
\newtheorem{proposition}[theorem]{Proposition}
\newtheorem{corollary}[theorem]{Corollary}
\newtheorem{remark}[theorem]{Remark}
\newtheorem{examplecore}[theorem]{Example}}
\newenvironment{example}{\begin{examplecore}}{\hspace*{\fill}
$\square$\par\vspace{.1cm}\end{examplecore}}
\begin{document}

\title{Units in Grothendieck-Witt rings and $\Ao$-spherical fibrations} 
\date{February 2013}

\author{Matthias Wendt}

\address{Matthias Wendt, Mathematisches Institut,
Albert-Ludwigs-Uni\-ver\-si\-t\"at Freiburg, Eckerstra\ss{}e 1, 79104, 
  Freiburg im Breisgau, Germany}
\email{matthias.wendt@math.uni-freiburg.de}

\subjclass[2010]{55R25,19G12,14F42}
\keywords{Grothendieck-Witt rings, $\Ao$-homotopy theory}

\begin{abstract}
In this note, we show that the units in Grothendieck-Witt rings extend
to an unramified strictly $\Ao$-invariant sheaf of abelian groups on
the category of smooth schemes. This implies that there is an
$\Ao$-local classifying space of spherical fibrations.  
\end{abstract}

\maketitle
\setcounter{tocdepth}{1}
\tableofcontents

\section{Introduction}

In this note, we discuss units in Grothendieck-Witt rings and
their relation to the theory of spherical fibrations. Via Morel's
generalization of the Brouwer degree \cite[Corollary
1.24]{morel:book}, the Grothendieck-Witt ring is the ring of
continuous self-maps of an algebraic sphere up to $\Ao$-homotopy. A
consequence of this is an identification of the group of homotopy
self-equivalences of a sphere with the units in the Grothendieck-Witt
ring.  
On the other hand, the general theory of localization of fibre
sequences of simplicial sheaves developed in \cite{flocal} implies
the existence of an $\Ao$-local classifying space of fibrations with
fibre $X$ provided the sheaf of homotopy self-equivalences is
strongly $\Ao$-local. These two results are the starting point for the
present work, in which we investigate strong $\Ao$-invariance of the
sheaf of units in Grothendieck-Witt rings. The main result is the
following: 

\begin{mainthm}
Let $k$ be an infinite perfect field of characteristic $\neq
2$. Associating to an extension field $L/k$ the abelian group
$GW(L)^\times$ of units in the Grothendieck-Witt ring extends to a
strictly $\Ao$-invariant sheaf of abelian groups on $\smk$. 
\end{mainthm}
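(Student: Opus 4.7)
The plan is to invoke Morel's characterization \cite{morel:book} of strictly $\Ao$-invariant (SAI) sheaves of abelian groups on $\smk$ as unramified sheaves equipped with specialization and residue data satisfying a list of compatibility axioms, together with the fact that the SAI subcategory is closed under extensions in Nisnevich sheaves of abelian groups. The goal is then to exhibit $L \mapsto GW(L)^\times$ as built from simpler, already-SAI sheaves via such extensions.

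A convenient first reduction uses the injective ring homomorphism $GW(L) \hookrightarrow \mathbb{Z} \times W(L)$ sending a quadratic form to the pair (rank, Witt class); its image is the equalizer of the two reductions modulo $2$. Restricting to unit groups yields a natural isomorphism
\[ GW(L)^\times \;\cong\; \{\pm 1\} \times W(L)^\times, \]
because a Witt unit must lie outside the fundamental ideal $I(L)$, hence has odd dimension, so the parity condition matching $\pm 1 \in \mathbb{Z}^\times$ is automatic. Since $\{\pm 1\} = \mu_2$ is the constant SAI sheaf, the problem reduces to proving that $L \mapsto W(L)^\times$ extends to an SAI sheaf on $\smk$.

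To handle $W^\times$, observe that for nonreal $L$ the Witt ring is local and $W(L)^\times = 1 + I(L)$, filtered by the multiplicative subgroups $1 + I^n(L)$. The successive quotients
\[ (1 + I^n)/(1 + I^{n+1}) \;\cong\; I^n/I^{n+1} \]
(via $1+x \mapsto x$, using $I^n \cdot I^n \subseteq I^{n+1}$) are identified with $\underline{K}^M_n/2$ by Orlov--Vishik--Voevodsky and are SAI. I would then realize $W^\times$ as an iterated extension of these SAI sheaves, with the classes $\langle a \rangle$ providing a natural map $\Gm/\Gm^2 \to W^\times$ at the bottom of the filtration (note $\Gm/\Gm^2$ is SAI via the Kummer sequence). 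For real $L$ one additionally tracks the signatures, but the picture of $W^\times$ as an iterated extension built from $\underline{K}^{MW}_n$, $\Gm/\Gm^2$, and their variants persists.

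The main obstacle is not the construction on fields but the passage to smooth schemes with correctly matching residue data. For a DVR $(\mathcal{O}_v, \kappa(v))$ in $L$ one must show that the unramified units $GW(\mathcal{O}_v)^\times$ have trivial residue, and conversely that the Witt-residue $\partial_v : W(L) \to W(\kappa(v))$ (twisted by the normal line) sends the unramified subsheaf of units into $W(\kappa(v))^\times$ compatibly enough to satisfy Morel's axioms for an unramified sheaf with data. Controlling the filtration uniformly over $\smk$, and checking these axioms for the multiplicative subobject of the SAI ring sheaf $\underline{GW}$, is where the bulk of the technical work lies; this compatibility of the additive residue structure of $W$ with the multiplicative structure on its units is the technical heart of the argument.
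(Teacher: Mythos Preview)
Your reduction $GW(L)^\times \cong \{\pm 1\} \times W(L)^\times$ via the fibre product $GW \cong \mathbb{Z}\times_{\mathbb{Z}/2} W$ is correct and matches the paper's Corollary on the exact sequence $1\to\{\pm 1\}\to GW^\times\to W^\times\to 1$; you even observe the splitting, which the paper does not make explicit.

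The substantive divergence is in how you propose to analyse $W^\times$. You filter $1+I$ by $1+I^n$ and invoke $I^n/I^{n+1}\cong K^M_n/2$. This runs into two genuine problems. First, the identification $W(L)^\times = 1+I(L)$ is only valid for nonreal $L$; over $\mathbb{R}$ one has $W=\mathbb{Z}$, $I=2\mathbb{Z}$, and $1+I$ contains $3$, which is not a unit. Your aside about ``tracking signatures'' does not yield a uniform sheaf-level description, and on $\smk$ the residue fields of a single scheme can be both real and nonreal. Second, even for nonreal $L$ the filtration $\{1+I^n\}$ need not terminate, so ``iterated extension of SAI sheaves'' becomes an inverse-limit statement for which closure of the SAI category is not available; closure under extensions and filtered colimits does not help here.

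The paper avoids both issues by using a different, uniform decomposition coming from the structure theory of Witt rings: every unit of $W(L)$ has the form $\pm\langle u\rangle(1+x)$ with $x$ \emph{nilpotent}, and $\Nil W(L)=I(L)_{\tor}$. This yields a pushout square expressing $W^\times$ in terms of $\mathbb{G}_m/2$ and $1+I_{\tor}$, and $I_{\tor}$ is SAI directly as the filtered colimit of the $2^n$-torsion in the already-SAI sheaf $\mathbf{I}^\ast$ (no appeal to Orlov--Vishik--Voevodsky is needed). The paper then builds the Gersten complex for $\mathbf{GW}^\times$ by transporting the Gersten complex for $\mathbf{I}_{\tor}$ along the set-bijection $I_{\tor}\leftrightarrow 1+I_{\tor}$; the point you do not anticipate is that the multiplicative structure on $1+I_{\tor}$ forces, on contractions, a \emph{modified} addition $a\boxplus_n b = a+b+(-2)^n ab$ on $W_{\tor}$. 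With this twist the residue maps become group homomorphisms and the Gersten complex for $\mathbf{GW}^\times$ is obtained as an extension of the Gersten complexes for $\mathbb{G}_m/2$ and for (a quotient of) $1+\mathbf{I}_{\tor}$, giving strict $\Ao$-invariance.

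In short: replace your filtration by powers of $I$ with the nilradical $I_{\tor}$; that is the missing idea that makes the decomposition uniform over all fields and finitary.
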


There are several ingredients to the proof. First of all, the
structure of the group of units in Grothendieck-Witt rings is
well-known - it is a combination of square classes and units arising
from torsion in the fundamental ideal. Moreover, for the additive
groups of Grothendieck-Witt rings as well as the powers of the
fundamental ideal, strict $\Ao$-invariance and the corresponding
Gersten resolutions are already established \cite{bgpw}
resp. \cite[Chapters 4,5]{morel:book}. We 
describe a Gersten-type resolution of the units in the
Grothendieck-Witt rings, based on the explicit computation of
contractions.  

It is noteworthy and strange that this Gersten resolution has
underlying sets and set maps almost the Gersten resolution of the
torsion in the fundamental ideal, but the abelian group structure of
its terms is different from the usual addition in the Witt rings. 

From the strict $\Ao$-invariance of the units in Grothendieck-Witt
rings, we can establish the existence of an $\Ao$-local classifying
space of spherical fibrations.

\begin{corollary}
Let $S^{2n,n}$ denote an $\Ao$-local fibrant model of the motivic
sphere, and denote by $\classify{S^{2n,n}}$ the corresponding
classifying space of Nisnevich locally trivial torsors, as in
\cite{classify}. Then this space is in fact $\Ao$-local and classifies
Nisnevich locally trivial spherical fibrations.
\end{corollary}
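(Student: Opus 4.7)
The plan is to invoke the fibre sequence localization criterion from \cite{flocal} recalled in the introduction: the classifying space $\classify{S^{2n,n}}$ is $\Ao$-local as soon as the simplicial sheaf $\haut(S^{2n,n})$ of pointed self-equivalences is strongly $\Ao$-local, i.e.\ has all its homotopy sheaves strictly $\Ao$-invariant. Once this is in place, the classification of Nisnevich locally trivial $S^{2n,n}$-fibrations by $\classify{S^{2n,n}}$ follows from the general theory of \cite{classify}, and this is exactly what is meant by spherical fibrations in the statement.

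I would first treat $\pi_0\haut(S^{2n,n})$. Morel's generalization of the Brouwer degree \cite[Corollary 1.24]{morel:book} identifies this sheaf of groups, with its composition product, with the units $GW^\times$ in the Grothendieck-Witt ring. Strict $\Ao$-invariance of this sheaf is then precisely the content of the main theorem of the paper, and this is the novel input.

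For $i\geq 1$, the sheaves $\pi_i\haut(S^{2n,n})$ agree with the homotopy sheaves of the connected component of the identity in the pointed internal mapping sheaf $\inthom_\bullet(S^{2n,n},S^{2n,n})$, and hence identify with $\Ao$-homotopy sheaves of $S^{2n,n}$ itself in the appropriate bidegree. Since the motivic sphere is sufficiently highly $\Ao$-connected, Morel's structure results for $\Ao$-homotopy sheaves \cite[Chapters 5,6]{morel:book} guarantee that these higher sheaves are strictly $\Ao$-invariant. Assembling the two steps, the simplicial sheaf $\haut(S^{2n,n})$ is strongly $\Ao$-local, and the criterion of \cite{flocal} delivers the corollary. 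The main obstacle is exactly the $\pi_0$ case and is the raison d'\^etre of the main theorem; the treatment of higher homotopy sheaves is then standard motivic machinery.
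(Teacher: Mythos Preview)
Your proposal is correct and follows essentially the same route as the paper. The paper's own proof (appearing as the first proposition of \prettyref{sec:sphere}) is a single sentence citing \prettyref{thm:sa1} together with \cite[Theorem 8.1, Corollary 8.2]{flocal}; your write-up simply unpacks what those citations contain --- the $\pi_0$ case handled by the main theorem via Morel's degree, and the higher homotopy sheaves handled by Morel's general structure results for $\Ao$-local spaces --- so there is no substantive difference in approach.
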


The relation between strong $\Ao$-invariance of the units and the
existence of a classifying space of spherical fibrations was already
pointed out in \cite[Remark 8.3]{flocal}.
With this classifying space, we can discuss orientability and
orientation characters for 
spherical fibrations. There is also an obvious analogue of the
unstable $J$-homomorphism $BGL_n\rightarrow \classify{S^{2n,n}}$, whose
homotopy fibre controls reduction of spherical fibrations to vector
bundles. It turns out that non-trivial torsion in the Witt ring
implies the existence of spherical fibrations which can not be reduced
to vector bundles. This is a phenomenon which is not visible in either
real or complex realization. In particular, we find exotic
Poincar{\'e} dualities on the projective line $\mathbb{P}^1$ over
$\mathbb{Q}$, which are not induced from vector bundles.

\emph{Structure of the paper:}
We first recall some basic facts about the structure of
Grothendieck-Witt rings in \prettyref{sec:gw}. In \prettyref{sec:unr},
we show that the units in the Grothendieck-Witt ring can be extended
to an unramified sheaf of abelian groups. The main work is the
computation of contractions in \prettyref{sec:contract} and the
description of the Gersten resolution of the units in
\prettyref{sec:gersten}. Finally, in \prettyref{sec:sphere}, we
discuss some consequences for orientation theory and spherical
fibrations. 

\emph{Acknowledgements:} I would like to thank Aravind Asok and
Annette Huber for discussions about the results of the paper.

\section{Recollection on Grothendieck-Witt rings}
\label{sec:gw}

We recall some basic facts about the structure of Grothendieck-Witt
rings. The reference used will be \cite{knebusch:kolster}. The reader
should be aware that there are more modern and much more high-tech
approaches to Grothendieck-Witt groups available,
cf. \cite{schlichting}. In the present work, we will not need more
than the structure of units and the existence of residue maps.

Throughout the paper we assume that the base field $k$ has
characteristic $\neq 2$. 

\subsection{Basic definitions}

A bilinear space over $k$ is a pair $(V,\phi)$ of a $k$-vector space
$V$ and a symmetric bilinear form $\phi:V\times V\rightarrow k$. For
example, the hyperbolic plane $\mathbb{H}$ is the bilinear space
$(k^2,h)$ with  the hyperbolic bilinear form
$h((x_1,x_2),(y_1,y_2))=x_1y_2+x_2y_1$. 

Denote by $S(k)$ the set of isomorphism classes of bilinear
spaces over $k$. Orthogonal sum of bilinear spaces defines an addition
on $S(k)$ with respect to which $S(k)$ is a commutative monoid. There
is also a multiplication given by tensor product of bilinear spaces:
$$
(V_1,\phi_1)\otimes(V_2,\phi_2)=\left(V_1\otimes
V_2,\phi_1\otimes\phi_2:(e_1\otimes e_2,f_1\otimes f_2)\mapsto
\phi_1(e_1,f_1)\phi_2(e_2,f_2)\right). 
$$

Two bilinear spaces $V_1$ and $V_2$ are called \emph{stably
  isomorphic} if there exists a bilinear space $W$ such that 
$V_1\oplus W\cong V_2\oplus W$. 

\begin{definition}
The Grothendieck-Witt ring $GW(k)$ of the field $k$ is the group
completion of the commutative semiring of stable isomorphism classes
of bilinear spaces over $k$. The Witt ring $W(k)$ is the quotient of
the Grothendieck-Witt ring by the ideal $\mathbb{Z}\cdot[\mathbb{H}]$. 
\end{definition}

It can be shown, cf. \cite{knebusch:kolster}, that the
Grothendieck-Witt ring is the quotient of the group ring
$\mathbb{Z}[k^\times/(k^\times)^2]$ of the group of square classes
modulo the relations  
$$
((1)+(a))((1)-(1+a)), \qquad a\in k^\times\setminus\{-1\}.
$$
with $(a)$ denoting the square class of $a$ in the group ring. This
description via the group ring of square classes is the basis for
various structural theorems on Grothendieck-Witt rings,
cf. \cite{knebusch:kolster}. 

Finally, note that in characteristic $\neq 2$, the Grothendieck-Witt
ring can also be defined as the ring of stable isomorphism classes of
quadratic spaces, i.e. vector spaces equipped with a quadratic form. 

\subsection{Description of units}
Next, we recall from \cite{knebusch:kolster} the description of the
units of the Witt ring of a field:  

\begin{proposition}
\label{prop:units}
For every field $F$ of characteristic $\neq 2$, there is a pushout
square of abelian groups: 
\begin{center}
\begin{minipage}[c]{10cm}
\begin{tikzpicture}[scale=1.2,arrows=->]
\node (A01) at (0,1) {$F^\times/(F^\times)^2\cap (1+I(F)_{\tor})$};
\node (A) at (0,0) {$F^\times/(F^\times)^2$};
\node (GmGm) at (4,1) {$(1+I(F)_{\tor})$};
\node (X) at (4,0) {$W(F)^\times$};
\draw (A01) to  (A);
\draw (A) to (X);
\draw (A01) to (GmGm);
\draw (GmGm) to (X);
\end{tikzpicture}
\end{minipage}
\end{center}
\end{proposition}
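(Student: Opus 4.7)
The plan is to exhibit $W(F)^\times$ as the internal amalgamated product of its subgroups $F^\times/(F^\times)^2$ (image of the square classes) and $1+I(F)_{\tor}$, glued along their intersection. I would organize the argument into three steps.

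First, I would check that both maps into $W(F)^\times$ are well-defined. The assignment $a \mapsto \langle a\rangle$ descends from $F^\times$ to $F^\times/(F^\times)^2$ because $\langle a\rangle\cdot\langle a\rangle = \langle a^2\rangle = \langle 1\rangle = 1$, so each square class is a unit of order at most $2$. For the second subgroup, the essential input is Pfister's theorem $\Nil(W(F)) = I(F)_{\tor}$: every $\alpha \in I(F)_{\tor}$ is nilpotent, hence $1+\alpha$ is a unit with inverse given by a finite geometric series $1 - \alpha + \alpha^2 - \cdots$. Closure of $1+I(F)_{\tor}$ under multiplication follows from $(1+\alpha)(1+\beta) = 1 + (\alpha + \beta + \alpha\beta)$, whose ``tail'' still lies in the ideal $I(F)_{\tor}$.

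Second, and this is where I expect the main obstacle, I would show that every unit $u \in W(F)^\times$ factors as $\langle a\rangle \cdot (1+\beta)$ with $a \in F^\times$ and $\beta \in I(F)_{\tor}$. Reducing modulo $I(F)_{\tor}$, the ring $W(F)/I(F)_{\tor}$ is torsion-free, and Pfister's local-global principle embeds it into $\mathrm{Cont}(X_F,\mathbb{Z})$ via the total signature, where $X_F$ is the space of orderings of $F$. Since $u$ is a unit in $W(F)$, its signature at every ordering lies in $\{\pm 1\}$. The crux is then the classical realization statement that any such sign pattern arising from a Witt-theoretic unit is already realized by a square class $\langle a \rangle \in F^\times/(F^\times)^2$; this yields $u \equiv \langle a\rangle \pmod{I(F)_{\tor}}$, and consequently $u = \langle a\rangle(1+\beta)$ with $\beta \in I(F)_{\tor}$.

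Finally, I would verify the pushout property by analyzing the kernel of the resulting surjection
\[
F^\times/(F^\times)^2 \oplus (1 + I(F)_{\tor}) \longrightarrow W(F)^\times, \qquad (a,\, 1+\alpha) \mapsto \langle a \rangle (1+\alpha).
\]
Whenever $\langle a \rangle (1+\alpha) = 1$, the identity $\langle a \rangle = (1+\alpha)^{-1}$ already lies in $1 + I(F)_{\tor}$, so $\langle a \rangle$ represents an element of the intersection $F^\times/(F^\times)^2 \cap (1+I(F)_{\tor})$; the pair $(a, 1+\alpha)$ is then the image of this element under the antidiagonal map $c \mapsto (c, c^{-1})$. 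This exhibits the kernel as precisely the antidiagonal image of the top-left corner of the diagram, which is the defining property of the pushout of abelian groups.
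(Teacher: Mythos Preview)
Your overall architecture matches the paper's proof: both verify that the two maps land in $W(F)^\times$, invoke the factorization of an arbitrary unit as $\langle a\rangle(1+\beta)$ with $\beta\in I(F)_{\tor}$, and then read off the pushout from the surjection with antidiagonal kernel. The paper simply cites \cite[Corollary 2.25]{knebusch:kolster} for the factorization step; you instead sketch a signature-theoretic route to it. Two points deserve attention.

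First, a technical slip: the claim that $W(F)/I(F)_{\tor}$ is torsion-free is false when $F$ is not formally real (e.g.\ $W(\mathbb{C})/I(\mathbb{C})_{\tor}\cong\mathbb{Z}/2$). The fix is a harmless case split: if $F$ is non-formally-real then $I(F)_{\tor}=I(F)$, and since any unit has odd dimension one already has $u\in 1+I(F)=1+I(F)_{\tor}$, so the factorization is trivial with $a=1$. Your signature argument then only needs to treat the formally real case, where indeed $W(F)_{\tor}=I(F)_{\tor}$ and the reduced Witt ring embeds in $\mathrm{Cont}(X_F,\mathbb{Z})$.

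Second, and more substantively, the ``classical realization statement'' you invoke is exactly the content of the result being cited, so as written your Step~2 is circular in spirit. You correctly flag it as the crux, but it should be proved rather than asserted. One clean way: represent the unit as $u=\langle a_1,\dots,a_n\rangle$ with $n$ odd, and set $d=(-1)^{(n-1)/2}a_1\cdots a_n$. A short parity count shows that at every ordering $P$ where $\mathrm{sgn}_P(u)=\pm 1$, one has $\mathrm{sgn}_P(\langle d\rangle)=\mathrm{sgn}_P(u)$; hence $\langle d\rangle u-1$ has all signatures zero and lies in $I(F)_{\tor}$, giving the desired factorization $u=\langle d\rangle(1+\beta)$. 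With these two corrections your argument is complete and is essentially the unpacked version of what the paper cites.
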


\begin{proof}
The bottom morphism $F^\times/(F^\times)^2\rightarrow W(F)^\times$ is
the obvious one, mapping a square class $a\in F^\times/(F^\times)^2$
to the class of the corresponding one-dimensional symmetric bilinear
space in $W(F)^\times$.   

The torsion of the fundamental ideal is a subgroup
$I(F)_{\tor}\subseteq W(F)$. As a consequence of \cite[Propositions
2.12 and 2.15]{knebusch:kolster}, $I(F)_{\tor}=\Nil(W(F))$. For
$x,y\in \Nil(W(F))=I(F)_{\tor}$, we have $(1+x)(1+y)=1+x+y+xy$ with
$x+y+xy\in \Nil(W(F))=I(F)_{\tor}$. Moreover, for $x\in
\Nil(W(F))$ a nilpotent element, $(1+x)$ is invertible - this provides
the subgroup $(1+I(F)_{\tor})\hookrightarrow W(F)^\times$. 

By \cite[Corollary 2.25]{knebusch:kolster} every unit of $W(F)$ is of
the form $\pm\langle u\rangle (1+x)$ with $u\in F^\times$  and $x\in
\Nil(W(F))$ nilpotent. As in \cite[Remark 2.26]{knebusch:kolster}, the
square classes in $1+\Nil(W(F))$ are precisely the square classes
$\langle u\rangle$ with $u$ a sum of squares. 

The assertion then follows from these statements.
\end{proof}

We are actually interested in the units of the Grothendieck-Witt
ring. We use the above quotient presentation $GW(F)\rightarrow
W(F)$.  

\begin{corollary}
\label{cor:gww}
There is an exact sequence
$$
1\rightarrow \{\pm 1\}\rightarrow GW(F)^\times\rightarrow
W(F)^\times\rightarrow 1.
$$
\end{corollary}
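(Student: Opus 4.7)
The plan is to present $GW(F)$ as a pullback of commutative rings and deduce the sequence formally from the description of units in such a pullback.

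The first step is to identify $GW(F)$ as a cartesian square
\[
GW(F) \;\cong\; \mathbb{Z}\times_{\mathbb{Z}/2} W(F),
\]
where the map $GW(F)\to\mathbb{Z}$ is the dimension and the map $W(F)\to\mathbb{Z}/2$ is the reduction of the dimension modulo $2$. This latter map is well-defined because $\dim[\mathbb{H}]=2$, so dimension mod $2$ descends along the quotient $GW(F)\twoheadrightarrow W(F)=GW(F)/\mathbb{Z}\cdot[\mathbb{H}]$. The only content of cartesianness is that $\mathbb{Z}\cdot[\mathbb{H}]=\ker(GW(F)\to W(F))$ maps isomorphically via dimension onto the kernel $2\mathbb{Z}$ of $\mathbb{Z}\to\mathbb{Z}/2$: injectivity of the natural map $GW(F)\to\mathbb{Z}\times_{\mathbb{Z}/2}W(F)$ amounts to the statement that an element of $\mathbb{Z}[\mathbb{H}]$ with dimension $0$ is zero, and surjectivity is realised by adjusting an arbitrary lift of $\bar\alpha$ by an appropriate multiple of $[\mathbb{H}]$.

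The second step is entirely formal: in any pullback of commutative rings $R=R_1\times_S R_2$, passing to units commutes with the pullback, giving $R^\times = R_1^\times\times_{S^\times} R_2^\times$ (the inverse of a unit pair automatically projects compatibly since $S^\times$ is a group). Applied here, and using $(\mathbb{Z}/2)^\times=\{1\}$ and $\mathbb{Z}^\times=\{\pm 1\}$, this yields
\[
GW(F)^\times \;\cong\; \{\pm 1\}\times_{\{1\}}W(F)^\times \;=\; \{\pm 1\}\times W(F)^\times.
\]
The projection onto the second factor is the map $GW(F)^\times\to W(F)^\times$ of the corollary; it is surjective with kernel $\{\pm 1\}$, giving (and in fact splitting) the asserted exact sequence.

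I do not expect any real obstacle: once one notices that $\dim[\mathbb{H}]=2$ exhibits $GW(F)$ as the advertised pullback, the rest is automatic. A more hands-on alternative would be to prove surjectivity directly by lifting the generators $\pm\langle u\rangle$ and $1+\bar x$ (with $\bar x\in I(F)_{\tor}$) supplied by \prettyref{prop:units} to units in $GW(F)$, and to compute the kernel by observing that an element $1+n[\mathbb{H}]\in GW(F)$ is a unit only for $n\in\{0,-1\}$, since the rank condition forces $1+2n=\pm 1$. The pullback argument is however more conceptual and avoids case-checking.
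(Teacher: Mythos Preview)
Your proof is correct and takes a genuinely different route from the paper's. The paper simply invokes \cite[Proposition 2.24]{knebusch:kolster} and remarks that $-1\in GW(F)$ is not a square class while its image $-1=\langle -1\rangle\in W(F)$ is; the actual work is outsourced to the reference. Your argument is self-contained: the cartesian square $GW(F)\cong\mathbb{Z}\times_{\mathbb{Z}/2}W(F)$ (a standard fact, and you verify it) together with the formal observation that units commute with fibre products of commutative rings gives $GW(F)^\times\cong\{\pm1\}\times W(F)^\times$ directly. This is cleaner, avoids the external citation, and yields the splitting of the sequence for free. One small point worth making explicit: under your identification the nontrivial element of the kernel is the pair $(-1,1)$, which in $GW(F)$ is $1-[\mathbb{H}]=-\langle-1\rangle$, not literally $-1=-\langle 1\rangle$; these coincide only when $\langle-1\rangle=1$ in $W(F)$. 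The statement of the corollary should thus be read with $\{\pm1\}$ as the abstract group $\mathbb{Z}/2$ (or as $\mathbb{Z}^\times$ sitting in the first factor of the pullback), which is exactly how your argument presents it.
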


\begin{proof}
This follows from \cite[Proposition 2.24]{knebusch:kolster} together
with the fact that $-1\in GW(F)$ is not a square class. It becomes a
square class in $W(F)$ since $-1=\langle-1\rangle$.
\end{proof}

\begin{remark}
Note that in general not all units of $W(F)$ are represented by square
classes. For example, if $p$ is an odd prime, the cardinality of
$W(\mathbb{Q}_p)$ is $16$, with $W(\mathbb{Q}_p)^\times$ of size $8$
but only $4$ square classes. 
In this case, we have
$$
W(\mathbb{Q}_p)^\times/(\mathbb{Q}_p^\times/(\mathbb{Q}_p^\times)^2)\cong
(1+I(\mathbb{Q}_p)_{\tor})/(\mathbb{Q}_p^\times/(\mathbb{Q}_p^\times)^2\cap
(1+I(\mathbb{Q}_p)_{\tor}))\cong\mathbb{Z}/2.
$$
We describe a representative. 

Let $a,b\in\mathbb{Q}_p$ with $(a,b)_p=-1$ and consider the norm form
of the quaternion algebra for $(a,b)$:
$$
f(X,Y,Z,T)=Z^2-aX^2-bY^2+abT^2.
$$
This is an element of $I(\mathbb{Q}_p)_{\tor}$ and has invariants
$d(f)=1$ and $\epsilon(f)=-(-1,-1)$. 
On the other hand, any $5$-dimensional form represented by a square
class is of the form $g\sim[a]\oplus\mathbb{H}^2$, and therefore has
invariants $d(g)=a$ and $\epsilon(g)=(-1,-1)$. We see that $1\oplus f$ -
which has the same invariants as $f$ - is a unit of $W(\mathbb{Q}_p)$
not represented by a square class. 

Of course, similar examples can be constructed over $\mathbb{Q}$ using
the Hasse-Minkowski local-global principle. This way, we see that 
$W(\mathbb{Q})^\times/(\mathbb{Q}^\times/(\mathbb{Q}^\times)^2)$
is infinite.

Conversely, note that $I(\mathbb{R})_{\tor}=0$, therefore of the units
in $W(\mathbb{R})=\mathbb{Z}$, one is of the form $1+x$ with $x$
nilpotent, but $-1$ is not.
\end{remark}

\section{The units as unramified sheaf}
\label{sec:unr}

In this section, we will show that the units in Grothendieck-Witt
rings extend to an unramified sheaf of abelian groups on the category
$\smk$ of smooth schemes over the field $k$, in the sense of
\cite[Definition 2.1]{morel:book}. For that, we use the correspondence
between unramified sheaves and unramified $\mathcal{F}_k$-data from
\cite[Section 2]{morel:book}. The data used for the units
(i.e. residue and specialization morphisms) are the same as those for
the Grothendieck-Witt rings, which are in fact compatible with the
ring structure. 

In this section, we assume the base field to be infinite perfect, and
we denote by $\mathcal{F}_k$ the category of field extensions $L/k$
such that $L$ has finite transcendence degree over $k$.  For a
discrete valuation $v$ on the field $L$, we denote by $\kappa(v)$ the 
corresponding residue field.

\subsection{Recollection on unramified sheaves}

We just recall the main points of \cite[Section
2.1]{morel:book}. 
The central definitions in \cite[Section 2.1]{morel:book} are the
definition of \emph{unramified presheaves of sets} in Definition 2.1,
and the definition of \emph{unramified $\mathcal{F}_k$-data} in the combined
Definition 2.6 and Definition 2.9.
The main result of the section is Theorem 2.11 which states that there
is an equivalence of categories between the unramified
$\mathcal{F}_k$-data and unramified sheaves of sets on $\smk$.
The functor from unramified sheaves on $\smk$ to $\mathcal{F}_k$-data
is given by evaluation - all finitely generated field extensions of
$k$ are function fields of smooth schemes over $k$. The functor in the
other direction, from $\mathcal{F}_k$-data to unramified sheaves on
$\smk$ is given by taking the unramified elements: for irreducible $X$
we take 
$$
X\mapsto \mathcal{S}(X)=\bigcap_{x\in
  X^{(1)}}\mathcal{S}(\mathcal{O}_x)\subseteq \mathcal{S}(k(X))
$$
and for general $X\in\smk$, we set
$\mathcal{S}(X)=\prod_{i\in X^{(0)}}\mathcal{S}(X_i)$.

\subsection{Residue and specialization morphisms}

In this section, we recall the residue morphisms on Grothendieck-Witt
rings. We  will mostly follow \cite{milnor} and \cite{morel:book}. The
residue morphisms provide us with the data (D1)-(D3) for unramified
$\mathcal{F}_k$-data. The first such datum is obvious - (D1) just
requires that we have induced morphisms for field extensions. The
obvious functorial homomorphisms for Grothendieck-Witt rings are ring
homomorphisms, which provides data (D1)  for the units of
Grothendieck-Witt rings. We note that $GW^\times$ is a continuous
functor: the Grothendieck-Witt rings are continuous functors,
therefore also $GW^\times$ commutes with filtered colimits. 

For the datum (D2), we need to specify the unramified elements. 
Recall from \cite[Theorem 3.15]{morel:book} that there are
residue homomorphisms $\partial^\pi_v:K^{MW}_\bullet(F)\rightarrow
K^{MW}_{\bullet-1}(\kappa(v))$ for any discrete valuation $v$ on $F$
with valuation ring $\mathcal{O}_v$, residue field $\kappa(v)$ and
choice of uniformizer $\pi$. By \cite[Lemma 3.19]{morel:book}, the
kernel of such a residue homomorphism does not depend on the choice of
uniformizer $\pi$, and is denoted by
$\underline{\mathbf{K}}^{MW}_n(\mathcal{O}_v)$. Finally, \cite[Theorem
3.22]{morel:book} states that this
$\underline{\mathbf{K}}^{MW}_\bullet(\mathcal{O}_v)$ is the subring
generated by $\eta$ and the symbols $[u]\in K^{MW}_1(F)$ with $u\in
\mathcal{O}_v^\times$. Therefore, we can set 
$$
\mathbf{GW}^\times(\mathcal{O}_v)=
\underline{\mathbf{K}}^{MW}_0(\mathcal{O}_v)^\times.
$$
In particular, the unramified elements form an abelian subgroup of
$GW(F)^\times$.  

Finally, we need the datum (D3), i.e. for any $F\in\mathcal{F}_k$ and
any discrete valuation $v$ on $F$ a specialization morphism
$s_v:\mathbf{GW^\times}(\mathcal{O}_v)\rightarrow\mathbf{GW^\times}(\kappa(v))$. 
For this, we have to unwind the proof of \cite[Lemma
2.36]{morel:book}, because the fact that Milnor-Witt K-theory
sheaves are unramified is deduced from other axioms which only make
sense in the $\mathbb{Z}$-graded situation which we do not have here.
The datum (D3) on $\underline{\mathbf{K}}^{MW}_0$ is given by 
$$
s_v:\underline{\mathbf{K}}^{MW}_0(\mathcal{O}_v)\rightarrow
K^{MW}_0(\kappa(v)):\alpha\mapsto\partial^\pi_v([\pi]\alpha)
$$ 
and is in fact independent of the choice of a $\pi$. We compute the value
of $s^\pi_v$ on generators $1+\eta[u]$ with
$u\in\mathcal{O}_v^\times$. From \cite[Theorem 3.15]{morel:book}, we find 
$$
s^\pi_v(\langle u\rangle)=\partial^\pi_v([\pi](1+\eta[u]))=
1+\eta[\overline{u}]=\langle \overline{u}\rangle.
$$
From \cite[Lemma 3.16]{morel:book}, this is in fact a morphism of
rings on $\underline{\mathbf{K}}^{MW}_0(\mathcal{O}_v)$. In
particular, the reduction morphism $\mathcal{O}_v^\times\rightarrow
\kappa(v)^\times$ induces a morphism of units 
$$
s_v:\underline{\mathbf{K}}^{MW}_0(\mathcal{O}_v)^\times\rightarrow
K^{MW}_0(\kappa(v))^\times.
$$ 
This is the relevant datum (D3). 

\begin{remark}
As in \cite[Theorem 3.22]{morel:book},
$\underline{\mathbf{K}}^{MW}_0(\mathcal{O}_v)$ is the subring of
$K^{MW}_0(F)$ generated by the symbols $\langle u\rangle=1+\eta[u]$
with $u\in \mathcal{O}_v^\times$. 
In particular, the morphism
$$
\mathbb{Z}[\mathcal{O}_v^\times/(\mathcal{O}_v^\times)^2]\rightarrow 
K^{MW}_0(F):u\mapsto \langle u\rangle
$$
factors through $\underline{\mathbf{K}}^{MW}_0(\mathcal{O}_v)$. It is
not clear to me if the relations usually imposed in the definition of
the Grothendieck-Witt ring generate the kernel of the above morphism.
\end{remark}

\begin{proposition}
\label{prop:intersect}
Let $F\in\mathcal{F}_k$, and let $v$ be a discrete valuation on $F$. 
We have 
$$
GW(F)^\times\cap
\mathbf{GW}(\mathcal{O}_v)=\mathbf{GW^\times}(\mathcal{O}_v).
$$
\end{proposition}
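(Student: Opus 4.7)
The inclusion $\supseteq$ is immediate: any element of $\mathbf{GW}^\times(\mathcal{O}_v)$ has its two-sided inverse already in $\mathbf{GW}(\mathcal{O}_v)$, and this same inverse witnesses invertibility in $GW(F)$. So the real content is the reverse inclusion. Let $\alpha \in GW(F)^\times \cap \mathbf{GW}(\mathcal{O}_v)$ be given; the task is to show that the unique inverse $\alpha^{-1} \in GW(F)$ actually lies in $\mathbf{GW}(\mathcal{O}_v)$.

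The plan is to exploit the structure theorem for units. By \prettyref{prop:units} and \prettyref{cor:gww}, we may write
$$\alpha = \epsilon \langle u \rangle (1+x)$$
with $\epsilon \in \{\pm 1\}$, $u \in F^\times$, and $x \in GW(F)$ projecting to a nilpotent element of $W(F)$; if $n$ is such that the nilpotent satisfies $x^n = 0$, then in $GW(F)$ we have the explicit formula $\alpha^{-1} = \epsilon \langle u^{-1}\rangle \sum_{k=0}^{n-1}(-x)^k$. The route I would take is via the Leibniz rule for the Milnor-Witt residue: for $\alpha \in \underline{\mathbf{K}}^{MW}_0(\mathcal{O}_v)$ and $\beta \in K^{MW}_0(F)$, the formula
$$\partial^\pi_v(\alpha \cdot \beta) = s_v(\alpha)\cdot \partial^\pi_v(\beta)$$
in $K^{MW}_{-1}(\kappa(v)) = W(\kappa(v))$ is a standard consequence of the axioms in \cite[\S 3]{morel:book}. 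Taking $\beta = \alpha^{-1}$, the left hand side is $\partial^\pi_v(1) = 0$, so $s_v(\alpha) \cdot \partial^\pi_v(\alpha^{-1}) = 0$. Since $\alpha \in GW(F)^\times$ has rank $\pm 1$ (its rank is a unit of $\mathbb{Z}$) and the specialization $s_v$ is a ring homomorphism preserving the augmentation, $s_v(\alpha)$ also has rank $\pm 1$; its image in $W(\kappa(v))$ is therefore of odd rank, hence a unit of $W(\kappa(v))$ by \prettyref{prop:units} applied over $\kappa(v)$, and in particular not a zero divisor. This forces $\partial^\pi_v(\alpha^{-1}) = 0$, i.e.\ $\alpha^{-1} \in \mathbf{GW}(\mathcal{O}_v)$, as required.

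The main obstacle is to cleanly extract the Leibniz formula above from Morel's graded setup and verify it really applies with unramified $\alpha$ in degree zero. Should that extraction prove awkward, I would fall back on a more hands-on structural analysis of $\alpha = \epsilon\langle u\rangle(1+x)$: show that the residue computation of $\langle u\rangle$ forces $v(u)$ to be even, so that after replacing $u$ by $u \pi^{-2k}$ (a modification by a square) we may assume $u \in \mathcal{O}_v^\times$; then deduce that the complementary factor $1+x$ also lies in $\mathbf{GW}(\mathcal{O}_v)$. Since $\mathbf{GW}(\mathcal{O}_v)$ is a subring closed under formation of polynomials in $x$ and contains $\langle u^{-1}\rangle$, the inverse formula $\alpha^{-1} = \epsilon \langle u^{-1}\rangle \sum_{k=0}^{n-1}(-x)^k$ then visibly lies in $\mathbf{GW}(\mathcal{O}_v)$.
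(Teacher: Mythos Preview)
Your main route via the Leibniz rule is a genuinely different approach from the paper's. The paper argues directly from the structural decomposition in \prettyref{prop:units}: it treats square-class units $\langle u\rangle$ and units of the form $1+a$ with $a\in I(F)_{\tor}$ separately, observing in the second case that if $1+a$ is unramified then $a\in\mathbf{I_{\tor}}(\mathcal{O}_v)$, whence the geometric-series inverse $1+\sum_{i\ge 1}(-a)^i$ is visibly unramified as well. Your fallback plan is essentially this argument.

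Your primary argument, however, contains a real error. You assert that because $s_v(\alpha)$ has rank $\pm 1$, its image in $W(\kappa(v))$ is ``of odd rank, hence a unit of $W(\kappa(v))$ by \prettyref{prop:units}''. This is false: \prettyref{prop:units} does not say that odd-dimensional elements are units, and they need not be. For $\kappa(v)=\mathbb{R}$ one has $W(\mathbb{R})=\mathbb{Z}$, and $3=\langle 1,1,1\rangle$ is odd-dimensional but not a unit. What you actually need is only the weaker statement that an odd-dimensional element of $W(\kappa(v))$ is a \emph{non-zero-divisor}, and this \emph{is} true: if $q$ has odd dimension and $qr=0$, then on each graded piece $I^{n}/I^{n+1}$ multiplication by $q$ acts as multiplication by its class $\overline{q}=1\in W/I\cong\mathbb{Z}/2$, so inductively $r\in I^n$ for all $n$, and then $r=0$ by the Arason--Pfister Hauptsatz $\bigcap_n I^n=0$. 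With this correction your Leibniz argument goes through, provided the projection formula $\partial^\pi_v(\alpha\beta)=s_v(\alpha)\cdot\partial^\pi_v(\beta)$ for unramified $\alpha$ in degree zero can indeed be extracted from \cite[\S3]{morel:book}, as you anticipate. The paper's structural route is shorter precisely because it sidesteps both that extraction and the non-zero-divisor argument.
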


\begin{proof}
Only the $\subseteq$-direction needs a proof. We use the
square of \prettyref{prop:units}.  
For units coming from $F^\times/(F^\times)^2$, the inclusion is
true: if such a unit is unramified, then it is the square class of a
unit in $\mathcal{O}_v$, hence invertible in
$\mathbf{GW}(\mathcal{O}_v)$.   

Now let $1+a\in GW(F)^\times$, i.e. $a\in I(F)_{\tor}$. Assume
$1+a\in\mathbf{GW}(\mathcal{O}_v)$, hence
$a\in\mathbf{I_{\tor}}(\mathcal{O}_v)$. Its inverse in $GW(F)^\times$ is 
given by $1+\sum_{i=1}^{n-1}(-a)^i$, where $n$ is the smallest positive
number such that $a^n=0$. But then
$\sum_{i=1}^{n-1}(-a)^i\in\mathbf{I_{\tor}}(\mathcal{O}_v)$, hence $1+a\in
\mathbf{GW^\times}(\mathcal{O}_v)$. 
\end{proof}

\subsection{Verification of the axioms}

Our next goal is the verification of the axioms (A1)-(A4) for
unramified $\mathcal{F}_k$-data from \cite[Chapter 2]{morel:book}.

\begin{lemma}[Axiom (A1)]
Let $\iota:E\hookrightarrow F$ be a separable extension in
$\mathcal{F}_k$, and let $v$ be a discrete valuation on $F$ which
restricts to a discrete valuation $w$ on $E$ with ramification index
$1$. Then $GW(\iota)^\times$ maps $\mathbf{GW^\times}(\mathcal{O}_w)$ into
$\mathbf{GW^\times}(\mathcal{O}_v)$. If furthermore $\iota$ induces an
isomorphism $\overline{\iota}:\kappa(w)\rightarrow \kappa(v)$ of residue
fields, then the following square is cartesian:  
\begin{center}
\begin{minipage}[c]{10cm}
\begin{tikzpicture}[scale=1.2,arrows=->]
\node (A01) at (0,1) {$\mathbf{GW^\times}(\mathcal{O}_w)$};
\node (A) at (0,0) {$GW(E)^\times$};
\node (GmGm) at (3,1) {$\mathbf{GW^\times}(\mathcal{O}_v)$};
\node (X) at (3,0) {$GW(F)^\times$};
\draw (A01) to  (A);
\draw (A) to (X);
\draw (A01) to (GmGm);
\draw (GmGm) to (X);
\end{tikzpicture}
\end{minipage}
\end{center}
\end{lemma}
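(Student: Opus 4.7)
The plan is to reduce both parts of the lemma to the corresponding statements for the additive structure of $\underline{\mathbf{K}}^{MW}_0$, which are already part of Morel's verification that Milnor-Witt $K$-theory yields unramified $\mathcal{F}_k$-data, and then to pass to units via \prettyref{prop:intersect}.

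For the first assertion, I would begin by observing that $GW(\iota): GW(E)\rightarrow GW(F)$ is a ring homomorphism and hence automatically sends units to units. The work thus reduces to checking that $GW(\iota)$ maps $\underline{\mathbf{K}}^{MW}_0(\mathcal{O}_w)$ into $\underline{\mathbf{K}}^{MW}_0(\mathcal{O}_v)$. Since the ramification index is $1$, any uniformizer $\pi$ of $\mathcal{O}_w$ can be used as a uniformizer of $\mathcal{O}_v$, and the naturality of the residue map of \cite[Theorem 3.15]{morel:book} with respect to $\iota$ gives $\partial^\pi_v\circ GW(\iota) = K^{MW}_{-1}(\overline{\iota})\circ \partial^\pi_w$. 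In particular the kernel of $\partial^\pi_w$ lands in the kernel of $\partial^\pi_v$, which is precisely the claimed inclusion.

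For the cartesian square, I would take $\beta\in GW(E)^\times$ with $GW(\iota)(\beta)\in \mathbf{GW^\times}(\mathcal{O}_v)$ and aim to show $\beta\in \mathbf{GW^\times}(\mathcal{O}_w)$. Forgetting the multiplicative structure, $GW(\iota)(\beta)$ lies in $\mathbf{GW}(\mathcal{O}_v)=\underline{\mathbf{K}}^{MW}_0(\mathcal{O}_v)$, so invoking the cartesian-square property of axiom (A1) for $\underline{\mathbf{K}}^{MW}_0$ from \cite[Chapter 3]{morel:book} --- applicable exactly because $\overline{\iota}$ is an isomorphism on residue fields --- yields $\beta\in \mathbf{GW}(\mathcal{O}_w)$. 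Applying \prettyref{prop:intersect} to $\beta\in GW(E)^\times\cap\mathbf{GW}(\mathcal{O}_w)$ then upgrades this to $\beta\in\mathbf{GW^\times}(\mathcal{O}_w)$, finishing the argument.

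The hard part has already been done elsewhere: the additive cartesian square is Morel's result, and the bridge between ``unit that is also unramified'' and ``unit in the unramified subring'' is exactly \prettyref{prop:intersect}. No substantial obstacle remains beyond citing these correctly; the only point one might want to double-check by hand is the naturality of residue maps along the separable extension $\iota$ when the ramification index equals $1$, which is built into the construction of \cite[Theorem 3.15]{morel:book}.
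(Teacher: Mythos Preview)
Your proposal is correct and follows essentially the same strategy as the paper's proof: reduce to Axiom~(A1) for the underlying sheaf $\mathbf{GW}=\underline{\mathbf{K}}^{MW}_0$ and then pass to units via \prettyref{prop:intersect}. The only cosmetic difference is that for the first assertion the paper argues via the generator description of $\underline{\mathbf{K}}^{MW}_0(\mathcal{O}_w)$ (it is generated by symbols $\langle u\rangle$ with $u\in\mathcal{O}_w^\times$, which visibly map into $\underline{\mathbf{K}}^{MW}_0(\mathcal{O}_v)$), whereas you use the naturality of residue maps; for the cartesian square your argument coincides verbatim with the first of the two alternatives the paper gives.
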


\begin{proof}
As $\mathbf{GW}(\mathcal{O}_w)=
\underline{\mathbf{K}}^{MW}_0(\mathcal{O}_w)$ is the subring
generated by symbols $\langle u\rangle$ with $u\in \mathcal{O}_w$ and
the ring homomorphism $GW(E)\rightarrow GW(F)$ is induced by $\langle
x\rangle\mapsto \langle \iota(x)\rangle$ for $x\in E^\times$, we find that
$GW(E)^\times\rightarrow GW(F)^\times$ restricts to a homomorphism
$\mathbf{GW}^\times(\mathcal{O}_w)\rightarrow
\mathbf{GW}^\times(\mathcal{O}_v)$.   

The second assertion states that $\mathbf{GW^\times}(\mathcal{O}_w)$
is the preimage of $\mathbf{GW^\times}(\mathcal{O}_v)$ under the
homomorphism $GW(E)^\times\rightarrow GW(F)^\times$. Axiom (A1) for
$GW$ states that $\mathbf{GW}(\mathcal{O}_w)$ is the
preimage of $\mathbf{GW}(\mathcal{O}_v)$ under the ring homomorphism
$GW(E)\rightarrow GW(F)$. Then we apply
\prettyref{prop:intersect}. Alternatively, given an element $x\in
GW(E)^\times$ which under the homomorphism lands in
$\mathbf{GW^\times}(\mathcal{O}_v)$. This means that both $x$ and
$x^{-1}$ are unramified in $GW(F)$. By Axiom (A1) for $GW$, both $x$
and $x^{-1}$ lie in $\mathbf{GW}(\mathcal{O}_w)$, hence $x\in
\mathbf{GW^\times}(\mathcal{O}_w)$. 
\end{proof}

\begin{lemma}[Axiom (A2)]
For an irreducible smooth scheme $X$ over $k$ with function field $F$,
every element $x\in GW(F)^\times$ lies in all but a finite number of
$\mathbf{GW^\times}(\mathcal{O}_v)$, with $v\in X^{(1)}$ running
through the (valuations for) codimension $1$ points of $X$. 
\end{lemma}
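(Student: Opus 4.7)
The plan is to deduce axiom (A2) for $\mathbf{GW^\times}$ from the analogous fact for $\mathbf{GW}$ itself. The latter is already available in the present setup via the identification $\mathbf{GW}(\mathcal{O}_v)=\underline{\mathbf{K}}^{MW}_0(\mathcal{O}_v)$ used in the previous subsection, together with the fact that the Milnor--Witt K-theory sheaves are unramified, as established in \cite{morel:book}. So the strategy is: reduce to (A2) for $\mathbf{GW}$ and then upgrade unramifiedness to unramifiedness of units using \prettyref{prop:intersect}.

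First I would invoke axiom (A2) for $\mathbf{GW}$: for any $x\in GW(F)$, the element $x$ lies in $\mathbf{GW}(\mathcal{O}_v)$ for all but finitely many $v\in X^{(1)}$. Applying this to the given unit $x\in GW(F)^\times$, regarded simply as an element of $GW(F)$, one obtains a cofinite subset $U\subseteq X^{(1)}$ such that $x\in\mathbf{GW}(\mathcal{O}_v)$ for every $v\in U$. For each such $v$ one then has simultaneously $x\in GW(F)^\times$ and $x\in\mathbf{GW}(\mathcal{O}_v)$, so \prettyref{prop:intersect} immediately gives
$$
x\in GW(F)^\times\cap\mathbf{GW}(\mathcal{O}_v)=\mathbf{GW^\times}(\mathcal{O}_v),
$$
which is exactly the conclusion of the lemma.

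The only point one could regard as an obstacle, namely that $x^{-1}$ is automatically unramified at the same $v$'s where $x$ is unramified, has already been absorbed into \prettyref{prop:intersect}, whose proof used the explicit description of units as products of square classes $\langle u\rangle$ (which are their own inverses up to a square class of units) and factors $1+a$ with $a\in I(F)_{\tor}$ nilpotent (whose inverses are polynomials in $a$ with coefficients in $\mathbb{Z}$). Consequently there is no serious obstacle left here: the lemma is essentially a formal corollary of axiom (A2) for $\mathbf{GW}$ combined with \prettyref{prop:intersect}, and I do not expect any subtlety beyond invoking these two inputs.
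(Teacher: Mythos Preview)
Your argument is correct and very close to the paper's own proof. The only difference is in packaging: the paper applies axiom (A2) for $\mathbf{GW}$ twice, once to $x$ and once to $x^{-1}$, and at the cofinitely many $v$ where both $x$ and $x^{-1}$ lie in $\mathbf{GW}(\mathcal{O}_v)$ it concludes directly that $x\in\mathbf{GW^\times}(\mathcal{O}_v)$; you instead apply (A2) for $\mathbf{GW}$ only to $x$ and then invoke \prettyref{prop:intersect} to upgrade $x\in\mathbf{GW}(\mathcal{O}_v)$ to $x\in\mathbf{GW^\times}(\mathcal{O}_v)$. As you yourself observe, the ``$x^{-1}$ is also unramified'' step is precisely what is hidden inside \prettyref{prop:intersect}, so the two proofs have the same content; the paper's version is slightly more self-contained, while yours makes explicit that this axiom is a formal consequence of a result already proved.
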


\begin{proof}
Any element in $K^{MW}_0(F)$ lies in all but a
finite number of $\underline{\mathbf{K}}^{MW}_0(\mathcal{O}_v)$. 
For $x\in GW(F)^\times$, both $x$ and $x^{-1}$ lie in all but a finite
number of $\mathbf{GW^\times}(\mathcal{O}_v)$, proving the claim. 
\end{proof}

\begin{lemma}[Axiom (A3)]
Let $\iota:E\hookrightarrow F$ be a separable extension in
$\mathcal{F}_k$, and let $v$ be a discrete valuation on $F$.
\begin{enumerate}[(i)]
\item
If $v$ restricts to a discrete valuation $w$ on $E$ with ramification
index $1$, then the following diagram is commutative: 
\begin{center}
\begin{minipage}[c]{10cm}
\begin{tikzpicture}[scale=1.2,arrows=->]
\node (A01) at (0,1) {$\mathbf{GW^\times}(\mathcal{O}_w)$};
\node (A) at (0,0) {$GW(\kappa(w))^\times$};
\node (GmGm) at (3,1) {$\mathbf{GW^\times}(\mathcal{O}_v)$};
\node (X) at (3,0) {$GW(\kappa(v))^\times$};
\draw (A01) to node [midway,left] {$s_w$} (A);
\draw (A) to (X);
\draw (A01) to (GmGm);
\draw (GmGm) to node [midway,right] {$s_v$} (X);
\end{tikzpicture}
\end{minipage}
\end{center}
\item If $v$ restricts to $0$ on $E$, then the image of
  $GW(E)^\times$ is contained in $\mathbf{GW^\times}(\mathcal{O}_v)$
  and - denoting $j:E\hookrightarrow \kappa(v)$ - we have a
  commutative triangle: 
\begin{center}
\begin{minipage}[c]{10cm}
\begin{tikzpicture}[scale=1.2,arrows=->]
\node (A) at (0,1) {$GW(E)^\times$};
\node (B) at (4,1) {$\mathbf{GW^\times}(\mathcal{O}_v)$};
\node (C) at (3,0) {$GW(\kappa(v))^\times$};
\draw (A) to (B);
\draw (B) to node [midway, right] {$s_v$} (C);
\draw (A) to node [midway, below] {$GW(j)$} (C);
\end{tikzpicture}
\end{minipage}
\end{center}
\end{enumerate}
\end{lemma}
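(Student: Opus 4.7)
My plan is to reduce both parts of the lemma to the corresponding compatibility statements for the sheaf of rings $\underline{\mathbf{K}}^{MW}_0$, which are essentially due to Morel, and then observe that everything restricts to units.

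For part (i), I would first recall that the specialization morphism $s_v$ on units was defined as the restriction to units of the ring homomorphism $s_v:\underline{\mathbf{K}}^{MW}_0(\mathcal{O}_v)\rightarrow K^{MW}_0(\kappa(v))$, $\alpha\mapsto \partial_v^\pi([\pi]\alpha)$. Since $\iota$ has ramification index $1$ at $w$, a uniformizer $\pi$ for $w$ remains a uniformizer for $v$, and the functoriality properties of the residue homomorphism $\partial_v^\pi$ stated in \cite[Theorem 3.15]{morel:book} yield commutativity of the analogous square for the rings $\underline{\mathbf{K}}^{MW}_0(\mathcal{O}_\bullet)$ and $K^{MW}_0(\kappa(\bullet))$. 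Restricting to units then gives the square claimed in the lemma, since the vertical maps are already known to preserve units by the previous lemma (Axiom (A1)) and $s_v$, $s_w$ are ring homomorphisms hence do the same.

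For part (ii), assume $v$ restricts to $0$ on $E$, so that $E\subseteq \mathcal{O}_v$. The ring homomorphism $GW(\iota):GW(E)\rightarrow GW(F)$ then factors through $\mathbf{GW}(\mathcal{O}_v)=\underline{\mathbf{K}}^{MW}_0(\mathcal{O}_v)$. Hence for $x\in GW(E)^\times$, both $x$ and $x^{-1}$ land in $\mathbf{GW}(\mathcal{O}_v)$, so by \prettyref{prop:intersect} we have $x\in\mathbf{GW^\times}(\mathcal{O}_v)$, proving the first claim. For commutativity of the triangle, I would again reduce to $\underline{\mathbf{K}}^{MW}_0$: for $x\in \mathbf{GW}(\mathcal{O}_v)$ coming from $E$ we have $s_v(x)=\partial_v^\pi([\pi]x)$, and since $x$ is unramified and pulled back from $E$, the formulas in \cite[Theorem 3.15]{morel:book} together with multiplicativity (\cite[Lemma 3.16]{morel:book}) identify this with the image of $x$ under the reduction $j:E\hookrightarrow\mathcal{O}_v\twoheadrightarrow\kappa(v)$, namely $GW(j)(x)$. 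Restricting to units concludes the argument.

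The main obstacle, as for the preceding axioms, is really just bookkeeping: the specialization morphism on units has no intrinsic description, it is merely the restriction of a ring-level construction on $\underline{\mathbf{K}}^{MW}_0$. Once one records that the ring-level specializations satisfy the analogues of (A3) for $\underline{\mathbf{K}}^{MW}_0$, which is part of Morel's proof that $\underline{\mathbf{K}}^{MW}_0$ defines unramified $\mathcal{F}_k$-data, and that these ring homomorphisms restrict to maps of units, both diagrams become formal.
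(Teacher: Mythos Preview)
Your proposal is correct and follows essentially the same strategy as the paper: both parts are reduced to the ring-level compatibility statements for $\underline{\mathbf{K}}^{MW}_0=\mathbf{GW}$ and then restricted to units. The only cosmetic difference is that the paper verifies the ring-level squares explicitly on generators $\langle u\rangle$ (reducing (i) to $\overline{\iota}(\langle\overline{u}\rangle)=\langle\overline{\iota(u)}\rangle$ and (ii) to $\langle u\rangle\mapsto\langle\overline{\iota(u)}\rangle$), whereas you invoke Morel's functoriality statements for $\partial_v^\pi$ as a black box; and in (ii) your appeal to \prettyref{prop:intersect} is harmless but slightly redundant, since noting that both $x$ and $x^{-1}$ land in $\mathbf{GW}(\mathcal{O}_v)$ already exhibits $x$ as a unit there.
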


\begin{proof}
Part (i) again can be verified using the whole Grothendieck-Witt
ring. The ring $\mathbf{GW}(\mathcal{O}_w)$ is generated by elements
of the form $\langle u\rangle$ with $u\in\mathcal{O}_w$. Commutativity
of the corresponding diagram for $GW$ instead of $GW^\times$ reduces
to the obvious $\overline{\iota}(\langle\overline{u}\rangle)=
\langle\overline{\iota(u)}\rangle$.  Since all morphisms in the
diagram for $GW$ respect the ring structure, we obtain a commutative
diagram of units, as required. 

For part (ii), we note again that the group $GW(E)$ is generated by
$\langle u\rangle$ with $u\in E^\times$. Since $v(u)=0$,
i.e. $\iota(u)\in\mathcal{O}_v^\times$, we have
$\langle\iota(u)\rangle\in
\underline{\mathbf{K}}^{MW}_0(\mathcal{O}_v)$. In particular
the image $GW(E)^\times$ in $GW(F)^\times$ is contained in
$\underline{\mathbf{K}}^{MW}_0(\mathcal{O}_v)^\times$. The
homomorphism denoted by $s_v\circ \mathcal{S}(i)$ in \cite{morel:book}
is therefore $\langle u\rangle\mapsto
\langle\overline{\iota(u)}\rangle$. The homomorphism denoted by
$j:E\rightarrow \kappa(v)$ equals $u\mapsto
\overline{\iota(u)}$. Again, everything is compatible with the ring
structures, hence we have verified (A3ii).
\end{proof}

\begin{lemma}[Axiom (A4)]
\begin{enumerate}[(i)]
\item Let $X$ be an essentially smooth local scheme of dimension $2$,
  let $z$ be the closed point of $X$ and let $y_0$ be a codimension $1$
  point with essentially smooth closure. Then the specialization 
  $s_{y_0}:\mathbf{GW^\times}(\mathcal{O}_{y_0})\rightarrow
  GW(\kappa(y_0))^\times$ maps $\bigcap_{y\in
    X^{(1)}}\mathbf{GW^\times}(\mathcal{O}_y)$ into
    $\mathbf{GW^\times}(\mathcal{O}_{\overline{y_0},z})\subseteq
    GW(\kappa(y_0))^\times$. 
\item The composition
$$
\bigcap_{y\in X^{(1)}}\mathbf{GW^\times}(\mathcal{O}_v)\rightarrow
\mathbf{GW^\times}(\mathcal{O}_{\overline{y_0},z})\rightarrow
\mathbf{GW^\times}(\kappa(z))
$$
does not depend on the choice of a codimension $1$ point $y_0$ with
essentially smooth closure. 
\end{enumerate}
\end{lemma}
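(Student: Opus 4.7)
The plan is to bootstrap both parts from the corresponding statements for the full sheaf $\mathbf{GW} = \underline{\mathbf{K}}^{MW}_0$, which are part of Morel's verification that the Milnor-Witt $K$-theory sheaves are unramified \cite[Chapter 2]{morel:book}. The bridge between ring-level and unit-level statements is \prettyref{prop:intersect}.

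For part (i), I would start with an element $x \in \bigcap_{y \in X^{(1)}} \mathbf{GW^\times}(\mathcal{O}_y)$. Since $s_{y_0}$ was constructed in the previous subsection as the restriction to groups of units of Morel's ring-valued specialization $\underline{\mathbf{K}}^{MW}_0(\mathcal{O}_{y_0}) \to K^{MW}_0(\kappa(y_0))$, the image $s_{y_0}(x)$ automatically lies in $GW(\kappa(y_0))^\times$. Forgetting the unit structure, the element $x$ also lies in $\bigcap_y \mathbf{GW}(\mathcal{O}_y)$, and Axiom (A4)(i) for $\mathbf{GW}$ forces $s_{y_0}(x) \in \mathbf{GW}(\mathcal{O}_{\overline{y_0},z})$. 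The ring $\mathcal{O}_{\overline{y_0},z}$ is a discrete valuation ring (as $\overline{y_0}$ is essentially smooth of dimension $1$ and local at $z$), so applying \prettyref{prop:intersect} gives $s_{y_0}(x) \in GW(\kappa(y_0))^\times \cap \mathbf{GW}(\mathcal{O}_{\overline{y_0},z}) = \mathbf{GW^\times}(\mathcal{O}_{\overline{y_0},z})$, as desired.

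For part (ii), I would argue by naturality. The composition in question is, by construction, the restriction to units of the analogous ring-level composition
\[
\bigcap_{y \in X^{(1)}} \mathbf{GW}(\mathcal{O}_y) \to \mathbf{GW}(\mathcal{O}_{\overline{y_0},z}) \to \mathbf{GW}(\kappa(z)),
\]
whose second arrow is Morel's specialization at the closed point of the DVR $\mathcal{O}_{\overline{y_0},z}$. Axiom (A4)(ii) applied to $\mathbf{GW}$ already asserts that this composition is independent of the choice of $y_0$. Since the inclusion $\mathbf{GW^\times}(\kappa(z)) \hookrightarrow \mathbf{GW}(\kappa(z))$ is injective, independence on units follows at once.

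The main obstacle, such as it is, is purely bookkeeping: one must make sure that every map and every subgroup appearing in the unit-level diagrams is the honest restriction of the corresponding object from the ring-level setup in \cite{morel:book}. This is tautological from our definitions, since $\mathbf{GW^\times}(\mathcal{O}_v)$ was introduced as $\underline{\mathbf{K}}^{MW}_0(\mathcal{O}_v)^\times$ and the specialization $s_v$ on units was the restriction of the ring-valued $s_v$; the ring-structure compatibility (already used in the proofs of (A1)--(A3)) then transports Morel's axiom verbatim to units.
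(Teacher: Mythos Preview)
Your proposal is correct and follows essentially the same approach as the paper: both parts are deduced from Axiom (A4) for the ring-level sheaf $\mathbf{GW}$, using that the specialization maps on units are restrictions of ring homomorphisms and invoking \prettyref{prop:intersect} to pass from $\mathbf{GW}(\mathcal{O}_{\overline{y_0},z}) \cap GW(\kappa(y_0))^\times$ back to $\mathbf{GW^\times}(\mathcal{O}_{\overline{y_0},z})$. Your explicit remark that $\mathcal{O}_{\overline{y_0},z}$ is a DVR (needed for \prettyref{prop:intersect} to apply) is a welcome point of precision that the paper leaves implicit.
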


\begin{proof}
Again, we know Axiom (A4) for the unramified sheaf  $\mathbf{GW}$ of
Grothendieck-Witt rings. 

In (i), the specialization morphism $\mathbf{GW}(\mathcal{O}_{y_0})\rightarrow
GW(\kappa(y_0))$  is a ring homomorphism, cf. \cite[Lemma
3.16]{morel:book}, and the specialization morphisms for units 
$s_{y_0}:\mathbf{GW^\times}(\mathcal{O}_{y_0})\rightarrow
GW(\kappa(y_0))^\times$ is induced from the corresponding
specialization morphism for $GW$. By Axiom (A4) for $\mathbf{GW}$,
$\bigcap_{y\in X^{(1)}}\mathbf{GW}(\mathcal{O}_y)$ lands in
$\mathbf{GW}(\mathcal{O}_{\overline{y_0},z})\subseteq
GW(\kappa(y_0))$. Therefore, $\mathbf{GW^\times}(\mathcal{O}_{y_0})$
lands in $\mathbf{GW}(\mathcal{O}_{\overline{y_0},z})\cap
GW(\kappa(y_0))^\times$. The conclusion follows from
\prettyref{prop:intersect}. 

The composition in (ii) is a composition of specialization
morphisms. As mentioned before, these are the restrictions of the
specialization morphisms from $\mathbf{GW}$ to the group of
units. Axiom (A4ii) for $\mathbf{GW}$ states that the composition 
$$
\bigcap_{y\in X^{(1)}}\mathbf{GW}(\mathcal{O}_v)\rightarrow
\mathbf{GW}(\mathcal{O}_{\overline{y_0},z})\rightarrow
\mathbf{GW}(\kappa(z))
$$
is independent of the choice of $y_0$. Therefore, so is the
restriction of this morphism to the groups of units. 
\end{proof}

Having verified the axioms, the following is now a consequence of
\cite[Theorem 2.11]{morel:book}.

\begin{proposition}
Let $k$ be an infinite perfect field of characteristic $\neq 2$. The
assignment  
$$
GW^\times:\mathcal{F}_k\rightarrow\set:L\mapsto GW(L)^\times
$$
together with the data (D1)-(D3) at the beginning of the section
satisfy the axioms for an unramified $\mathcal{F}_k$-datum. In
particular, $GW^\times$ extends to an unramified sheaf of abelian
groups on $\smk$. 
\end{proposition}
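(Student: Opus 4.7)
The plan is to assemble, in one place, the pieces constructed through the section and apply Morel's equivalence between unramified $\mathcal{F}_k$-data and unramified sheaves on $\smk$. The data (D1)--(D3) have already been laid out at the beginning of the subsection: (D1) is given by the functoriality of $GW^\times$ under separable field extensions, obtained by passing to units in the ring homomorphism $GW(\iota)$; (D2) takes the unramified subgroup to be $\mathbf{GW}^\times(\mathcal{O}_v) = \underline{\mathbf{K}}^{MW}_0(\mathcal{O}_v)^\times$, whose identification as a subgroup of $GW(F)^\times$ is exactly \prettyref{prop:intersect}; and (D3) is the specialization $s_v$ obtained by restricting the ring specialization $\underline{\mathbf{K}}^{MW}_0(\mathcal{O}_v) \to K^{MW}_0(\kappa(v))$ to units.

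The first step of the proof is then to observe that the four preceding lemmas constitute exactly a verification of axioms (A1)--(A4) of \cite[Definitions 2.6 and 2.9]{morel:book} for this datum, together with the continuity statement noted in the preamble to (D1). Invoking \cite[Theorem 2.11]{morel:book} then yields an unramified sheaf of sets on $\smk$, whose sections over an irreducible $X$ with function field $F$ are the subset $\bigcap_{v\in X^{(1)}} \mathbf{GW}^\times(\mathcal{O}_v) \subseteq GW(F)^\times$.

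The remaining task is to upgrade from a sheaf of sets to a sheaf of abelian groups. The key observation is that every structural map appearing in the $\mathcal{F}_k$-datum --- each restriction (D1) and each specialization (D3) --- is a group homomorphism, because it arises as the restriction to units of a ring homomorphism between the ambient $\underline{\mathbf{K}}^{MW}_0$'s. Consequently the intersection defining sections is a subgroup of $GW(F)^\times$, and the transition maps between sections of the unramified sheaf are automatically group homomorphisms, so the sheaf of sets canonically lifts to a sheaf of abelian groups under the \emph{multiplicative} structure inherited from $GW$.

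I do not anticipate a genuine obstacle here, as the technical heart of the argument has already been distributed across \prettyref{prop:intersect} and the four axiom-verifying lemmas. The only point requiring a little care is to keep track of the fact that $GW^\times$ carries the multiplicative group structure and to check that the specialization maps preserve this rather than the additive one --- but this is immediate from the construction of (D3) as the restriction of a ring homomorphism.
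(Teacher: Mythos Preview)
Your proposal is correct and follows essentially the same approach as the paper: assemble the data (D1)--(D3), note that the preceding lemmas verify axioms (A1)--(A4), invoke \cite[Theorem 2.11]{morel:book}, and observe that the abelian group structure comes along because all structural maps are restrictions to units of ring homomorphisms. The paper in fact states the proposition as an immediate consequence of the verified axioms and Morel's theorem, relegating your ``upgrade to abelian groups'' observation to the subsequent remark about unramified sheaves of rings having unramified sheaves of units.
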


\begin{remark}
Note that what we have proved above is: there is a reasonable notion
of unramified sheaf of rings in which all the data (D1)-(D3) are
compatible with the ring structures, and an unramified sheaf of rings
has an unramified sheaf of units. In lack of examples other than the
one above, we chose not to axiomatize this.
\end{remark}

\section{Contractions}
\label{sec:contract}

In this section, we want to compute the contractions of the unramified
sheaf $\mathbf{GW^\times}$. Recall from \cite{morel:book}, that for a
presheaf of groups $\mathcal{G}$ on $\smk$, one can define its
contraction  
$$
\mathcal{G}_{-1}:\smk^{\operatorname{op}}\rightarrow\mathcal{G}rp:X\mapsto
\ker\left(\operatorname{ev}_1:\mathcal{G}(\mathbb{G}_m\times
  X)\rightarrow\mathcal{G}(X)\right),
$$
where the morphism $\operatorname{ev}_1$ is induced from the inclusion
$1:\Spec(k)\hookrightarrow \mathbb{G}_m$.

In the course of computing contractions, we will need a modified
version of Witt groups. We introduce some notation:

\begin{definition}
\label{def:halfwitt}
Let $k$ be a field. We denote by $W(k)_{\tor}$ the torsion subgroup of 
the Witt ring. For $n\in\mathbb{N}$, we denote by $W(k)_{\tor}^{(n)}$
the set of torsion elements in $W(k)$ with the following modified
addition 
$$
a\boxplus_n b=a+b+(-2)^n\cdot a\cdot b,
$$
where $+$ and $\cdot$ on the right are the usual addition and
multiplication in the Witt ring.
\end{definition}

\begin{lemma}
The addition $\boxplus_n$ defined above turns $W(k)_{\tor}^{(n)}$ into an
abelian group. 
\end{lemma}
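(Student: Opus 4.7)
The plan is to verify the group axioms for $\boxplus_n$ directly, with the guiding observation being that the affine substitution $\phi_n\colon a\mapsto 1+(-2)^n a$ converts $\boxplus_n$ into ordinary multiplication in $W(k)$. A short expansion gives
$$
\phi_n(a)\phi_n(b) \;=\; \bigl(1+(-2)^n a\bigr)\bigl(1+(-2)^n b\bigr) \;=\; 1 + (-2)^n\bigl(a+b+(-2)^n ab\bigr) \;=\; \phi_n(a\boxplus_n b),
$$
so every algebraic property of $\boxplus_n$ transports from multiplication in $W(k)$ via $\phi_n$.

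The easy axioms come first. Closure is immediate because $W(k)_{\tor}$ is an ideal of $W(k)$ (stable under addition and under multiplication by any ring element), so $a+b+(-2)^n ab$ remains torsion. The element $0$ is plainly a two-sided identity, and commutativity is obvious from the formula. Associativity can be read off either from the identity $\phi_n(a\boxplus_n b)=\phi_n(a)\phi_n(b)$ together with associativity of multiplication in $W(k)$, or by direct expansion: both $(a\boxplus_n b)\boxplus_n c$ and $a\boxplus_n(b\boxplus_n c)$ reduce to the symmetric expression $a+b+c+(-2)^n(ab+bc+ca)+(-2)^{2n}abc$.

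The content of the lemma lies in the existence of inverses. Given $a\in W(k)_{\tor}$, an inverse must satisfy $b\bigl(1+(-2)^n a\bigr)=-a$, so it suffices to show that $1+(-2)^n a$ is a unit in $W(k)$. By Pfister's theorem torsion in $W(k)$ is $2$-primary, so if $2^m a=0$ then $2^m a^N=0$ for every $N\geq 1$, and therefore $\bigl((-2)^n a\bigr)^N = \pm 2^{nN} a^N = 0$ as soon as $nN\geq m$. Thus $(-2)^n a$ is nilpotent (for $n\geq 1$), the element $1+(-2)^n a$ is a unit of $W(k)$ with inverse given by the finite geometric series $\sum_{i\geq 0}\bigl(-(-2)^n a\bigr)^i$, and $b := -a\bigl(1+(-2)^n a\bigr)^{-1}$ lies in $W(k)_{\tor}$ (being $-a$ times a ring element) and satisfies $a\boxplus_n b = 0$.

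The main obstacle is precisely this last step, and it is the specific coefficient $(-2)^n$ that makes it work: it forces $(-2)^n a$ into the nilradical of $W(k)$ via $2$-primary torsion. For the degenerate case $n=0$ the same argument would require $a$ itself to be nilpotent, so there the statement should really be read with $W(k)_{\tor}$ replaced by $I(k)_{\tor}=\Nil(W(k))$ as identified in the proof of \prettyref{prop:units}, where the inverse formula remains literally the same.
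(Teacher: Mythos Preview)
Your proof is correct and follows essentially the same approach as the paper: both verify the group axioms directly and produce the inverse via the geometric series for $(1+(-2)^n a)^{-1}$. Your version is in fact more complete on the one substantive point --- the paper simply writes down the inverse formula and calls it ``the usual formula for inverting a nilpotent element'' without saying why such a finite sum exists, whereas you supply the missing justification by invoking Pfister's theorem to see that $(-2)^n a$ is nilpotent for $n\geq 1$. The observation that $\phi_n(a)=1+(-2)^n a$ turns $\boxplus_n$ into ring multiplication is a nice conceptual packaging that the paper does not make explicit, and your remark on the degenerate case $n=0$ (where one needs $I(k)_{\tor}=\Nil(W(k))$ rather than all of $W(k)_{\tor}$) is a useful caveat that the paper leaves unaddressed.
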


\begin{proof}
The product of torsion elements is again a torsion element, hence the
modified addition is in fact an operation on $W(k)_{\tor}$.
Associativity is 
\begin{eqnarray*}
(a\boxplus_n b)\boxplus_n c&=&
a+b+c+(-2)^n\left(ab+ac+bc+(-2)^nabc\right)\\
&=&a\boxplus_n(b\boxplus_n c).
\end{eqnarray*}
The neutral element is $0$, since $0\boxplus_n a=0+a+(-2)^n\cdot 0\cdot
a$. 
Commutativity is clear from the defining formula and commutativity of
the Witt ring multiplication.
Finally, the $\boxplus_n$-inverse of $a$ is given by 
$$
-\sum_{i=1}^j (-1)^{(i-1)(n-1)} 2^{n(i-1)}a^i.
$$
This is the usual formula for inverting a nilpotent element, the bound
$j$ for the summation is the smallest natural number such that
$(-1)^{(j-1)(n-1)} 2^{nj}a^{j+1}=0$. 
\end{proof}

In the following, we use the notation $H^1_v$ from \cite[Section
3]{morel:book}. 

\begin{lemma}
\label{lem:contract}
Let $F$ be a field in $\mathcal{F}_k$, let $v$ be a discrete
valuation on $F$ and let $\pi$ be a uniformizing element for $v$. Then
there exist isomorphisms
\begin{enumerate}[(i)]
\item
$H^1_v(\mathcal{O}_v,\mathbb{G}_m/2)=
F^\times/((F^\times)^2\cdot \mathcal{O}_v^\times)
\stackrel{\cong}{\longrightarrow} \mathbb{Z}/2$, and
\item
$H^1_v(\mathcal{O}_v,1+\mathbf{I_{\tor}})=
(1+I(F)_{\tor})/(1+\mathbf{I_{\tor}}(\mathcal{O}_v))
\stackrel{\cong}{\longrightarrow} W(\kappa(v))_{\tor}^{(1)}$.
\end{enumerate}
\end{lemma}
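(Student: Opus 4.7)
Part (i) is direct: every $f \in F^\times$ factors as $f = u\pi^{v(f)}$ with $u \in \mathcal{O}_v^\times$, and modulo $(F^\times)^2 \cdot \mathcal{O}_v^\times$ only $v(f) \bmod 2$ survives. The resulting homomorphism $F^\times/((F^\times)^2 \cdot \mathcal{O}_v^\times) \to \mathbb{Z}/2$ is surjective, witnessed by $\pi$, and injective, since $v(f) = 2m$ forces $f = u(\pi^m)^2 \in \mathcal{O}_v^\times \cdot (F^\times)^2$.

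For part (ii), my approach hinges on a single observation: the element $g := \langle\pi\rangle - \langle 1\rangle \in I(F)$ satisfies $g^2 = -2g$, by direct computation:
\[
g^2 = \langle\pi\rangle^2 - 2\langle\pi\rangle + \langle 1\rangle = \langle 1\rangle - 2\langle\pi\rangle + \langle 1\rangle = -2g,
\]
using $\langle\pi\rangle^2 = \langle\pi^2\rangle = \langle 1\rangle$ in $W(F)$. Using Springer's decomposition (passing to the Henselization of $\mathcal{O}_v$ if needed, where $W(\mathcal{O}_v^h) \to W(\kappa(v))$ is an isomorphism), I would write every $a \in I(F)_{\tor}$ uniquely as $a = A_0 + g A_1$ with $A_0 \in \mathbf{I_{\tor}}(\mathcal{O}_v)$ and $A_1 \in W(\mathcal{O}_v)$ (the torsion claim for $A_0$ and $A_1$ follows from uniqueness of the decomposition together with the injectivity of $W(\mathcal{O}_v) \hookrightarrow W(F)$), and recall that $\partial^2_\pi(a) = \bar{A_1}$.

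For the multiplicative quotient, I normalize $1 + a$ by multiplying with the unit $(1 + A_0)^{-1} \in 1 + \mathbf{I_{\tor}}(\mathcal{O}_v)$ to obtain the canonical representative $1 + g\alpha$ with $\alpha := A_1(1+A_0)^{-1} \in W(\mathcal{O}_v)$, and then I define $\Phi([1+a]) := \bar\alpha \in W(\kappa(v))_{\tor}$. Well-definedness on the quotient follows from the uniqueness of the normal form, and bijectivity reduces to the surjectivity and injectivity of $\partial^2_\pi$ restricted to $I(F)_{\tor}$. The verification that $\Phi$ intertwines the multiplicative structure on the source with $\boxplus_1$ on the target is then a one-line computation: for two normal forms,
\[
(1 + g\alpha)(1 + g\beta) = 1 + g(\alpha + \beta) + g^2\alpha\beta = 1 + g(\alpha + \beta - 2\alpha\beta),
\]
which is already in normal form thanks to $g^2 = -2g$. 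Taking residues yields the induced operation $\bar\alpha + \bar\beta - 2\bar\alpha\bar\beta = \bar\alpha \boxplus_1 \bar\beta$, exactly matching \prettyref{def:halfwitt} and explaining the otherwise mysterious factor $-2$. The main technical obstacle is thus not the final computation but the setting-up: showing that $\partial^2_\pi$ restricts to a surjection $I(F)_{\tor} \twoheadrightarrow W(\kappa(v))_{\tor}$ in the non-Henselian setting, which I expect to require constructing torsion lifts via explicit Pfister-type formulas together with the identification $I_{\tor} = \Nil$ from Pfister's theorem in characteristic $\neq 2$.
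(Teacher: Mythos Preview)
Your core computation is exactly the one the paper carries out: set $g=\langle\pi\rangle-1$, use $g^2=-2g$, normalize every coset to a representative $1+g\alpha$, and read off the $\boxplus_1$-addition from $(1+g\alpha)(1+g\beta)=1+g(\alpha+\beta-2\alpha\beta)$. So for the homomorphism property and for injectivity your route and the paper's coincide. The paper phrases the map as $1+a\mapsto\partial(a)$ and then immediately passes to normal forms; your formulation via $\bar\alpha=\overline{A_1(1+A_0)^{-1}}$ is the same thing once one unwinds the normalization.

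Two points deserve comment. First, you lean on \emph{uniqueness} of the Springer decomposition $a=A_0+gA_1$, invoking Henselization ``if needed''. In the non-Henselian setting this decomposition is not unique, and passing to $F^h$ changes the group you are computing; one should instead argue directly that the assignment $[1+a]\mapsto\bar\alpha$ is independent of the chosen decomposition (your observation that $\alpha$ is invariant under multiplying by $(1+c)$ with $c$ unramified is the right check, but independence from the choice of $A_0,A_1$ also needs to be said). The paper sidesteps this by working with $\partial$ and normal forms simultaneously.

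Second, and more substantially, surjectivity: you correctly flag it as the real obstacle but only gesture at ``explicit Pfister-type formulas''. The paper's argument is quite different from what you sketch. It uses Lindel's presentation lemma to find a subring $A=L[X]_{\mathfrak m}\subset\mathcal{O}_v$ with the same residue field and ramification index $1$, and then appeals to Milnor's split exact sequence
\[
0\to I(L)\to I(L(T))\to\bigoplus_\pi W(L[T]/(\pi))\to 0
\]
to lift any torsion class in $W(\kappa(v))$ to a torsion class in $I(L(T))$; pushing forward along $\operatorname{frac}(A)\hookrightarrow F$ gives the desired preimage in $1+I(F)_{\tor}$. This is a cleaner and more robust route than constructing lifts by hand.
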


\begin{proof}
The isomorphism (i) is clear: every element of $x\in F^\times$ can be
written as $x=\pi^n\cdot u$ with $u\in\mathcal{O}_v^\times$, and the
isomorphism is given by mapping $x=\pi^nu\mapsto n\mod 2$.

For (ii), we need a little more work. Recall (e.g. from \cite[Proof of
Corollary 5.2]{milnor}) the definition of the residue homomorphism
$$
\partial: I(F)\stackrel{\partial}{\longrightarrow}
W(\kappa(v)):\langle \pi^n u\rangle\mapsto\left\{ 
\begin{array}{ll}
\langle\overline{u}\rangle & 2\nmid n\\
0 & \textrm{otherwise}
\end{array}\right.
$$
This is evidently an epimorphism, and its kernel is
$\mathbf{I}(\mathcal{O}_v)$. The residue morphism restricts to a map
$$
\partial:1+I(F)_{\tor}\rightarrow
W(\kappa(v))_{\tor}^{(1)}:1+a\mapsto \partial(a), 
$$
where the group structure on the source is multiplication in the Witt
ring, the group structure on the target is the modified addition of
\prettyref{def:halfwitt}. We need to establish that we have a
homomorphism.  
First note that $GW(F)$ is generated by $\langle x\rangle$ with $x\in
F^\times$. Any such $x$ can be written as $x=\pi^nu$ with $u\in
\mathcal{O}_v^\times$. Since  $\langle \pi^2\rangle=1$, any
element of $1+I(F)_{\tor}$ can be written as follows:
\begin{eqnarray*}
1+a+\langle\pi\rangle b&=&1+a+b+(\langle\pi\rangle-1)b\\
&\equiv & 1+(\langle\pi\rangle-1)b(1+a+b)^{-1}\mod
\mathbf{GW^\times}(\mathcal{O}_v).
\end{eqnarray*}
Note that we have used here that if $a+\langle\pi\rangle b$ is
torsion, then (via $\partial$) also $b$ and hence $a$ are torsion, so
$1+a+b$ is in fact invertible. The above decomposition is unique up to
multiplication with elements from
$\mathbf{GW^\times}(\mathcal{O}_v)$. Now let
$(1+(\langle\pi\rangle-1)a)$ and $(1+(\langle\pi\rangle-1)b)$ be 
such elements in $1+I(F)_{\tor}$. We find 
\begin{eqnarray*}
(1+(\langle\pi\rangle-1)a)\cdot(1+(\langle\pi\rangle-1)b)&=&
1+(\langle\pi\rangle-1)(a+b)+(\langle\pi\rangle-1)^2ab\\
&=&1+(\langle\pi\rangle-1)(a+b-2ab), 
\end{eqnarray*}
where we use $(\langle\pi\rangle-1)^2=2-2\langle\pi\rangle$. Applying
$\partial$ to the above equation, we have on the left side the
reduction of the product, and on the right side, we have
$\boxplus_1$-sum $a+b-2ab$. Hence, $\partial$ is a homomorphism.  

For injectivity, we note that the kernel of $\partial$  is identified
with  $1+\mathbf{I_{\tor}}(\mathcal{O}_v)$, since the set
$\mathbf{I_{\tor}}(\mathcal{O}_v)$ of unramified elements of
$I(F)_{\tor}$ is exactly the kernel of
$\partial:I(F)_{\tor}\rightarrow W(\kappa(v))_{\tor}$. 

Finally, we need surjectivity of $\partial$. Let  $\overline{a}\in  
W(\kappa(v))_{\tor}$ be an element. The ring $\mathcal{O}_v$ is smooth
and essentially of finite type over a field
$L\subseteq\mathcal{O}_v$. By \cite[Proposition 2]{lindel},
there exists a subring $A\hookrightarrow \mathcal{O}_v$ such that
$A=L[X]_{\mathfrak{m}}$ with $\mathfrak{m}\subseteq L[X]$ a maximal
ideal. Moreover, the inclusion $A\subseteq \mathcal{O}_v$ has the
following properties: in the corresponding field extension
$\operatorname{frac}(A)\hookrightarrow F$, the valuation $v$ on $F$
restricts to the valuation of $\operatorname{frac}(A)$ having $A$ as
valuation ring, the ramification index is $1$ and the inclusion
$A\hookrightarrow\mathcal{O}_v$ induces an isomorphism of residue
fields. Now we use Milnor's split exact sequence \cite[Theorem
5.3]{milnor} 
$$
0\rightarrow I(L)\rightarrow I(L(T))\rightarrow
\bigoplus_{\pi} W(L[T]/(\pi))\rightarrow 0. 
$$
Let $\pi\in L[T]$ be a generator of the maximal ideal $\mathfrak{m}$
above. The element $\overline{\alpha}$ can be viewed as an element of
$W(L[T]/(\pi))_{\tor}$, since the inclusion
$A\hookrightarrow\mathcal{O}_v$ induces an isomorphism of residue
fields. Because Milnor's sequence is split exact, there exists an
element $\alpha\in I(L(T))_{\tor}$ mapping to $\overline{\alpha}\in
W(L[T]/(\pi))_{\tor}$ under the corresponding residue
homomorphism. The image of $1+\alpha$ under the canonical homomorphism
$1+I(\operatorname{frac}(A))_{\tor}\rightarrow 1+I(K)_{\tor}$ provides a
$\partial$-preimage of $\overline{\alpha}$.
\end{proof}

Note that the first isomorphism in the statement is independent of
the choice of a $\pi$, the second is not. 

\begin{proposition}
\label{prop:contract}
Let $F$ be a field in $\mathcal{F}_k$, let $v$ be a discrete
valuation on $F$ and let $\pi$ be a uniformizing element for $v$. Then
there exists an isomorphism 
$$
H^1_v(\mathcal{O}_v,\mathbf{GW^\times})=
GW(F)^\times/\mathbf{GW^\times}(\mathcal{O}_v)
\stackrel{\cong}{\longrightarrow} 
\mathcal{A}\oplus W(\kappa(v))_{\tor}^{(1)},
$$
where $\mathcal{A}=0$ if $\pi$ is a sum of squares in
$F$, and $\mathcal{A}\cong\mathbb{Z}/2$ otherwise.
\end{proposition}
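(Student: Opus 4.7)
I will plan to compute $GW(F)^\times/\mathbf{GW^\times}(\mathcal{O}_v)$ by first reducing to the analogous quotient of Witt units and then analyzing the pushout description of \prettyref{prop:units} modulo its unramified part.

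The first step is a formal reduction. The short exact sequence of \prettyref{cor:gww}, together with the observation that $\langle -1\rangle \in \mathbf{GW^\times}(\mathcal{O}_v)$ (since $-1\in\mathcal{O}_v^\times$), yields
$$GW(F)^\times/\mathbf{GW^\times}(\mathcal{O}_v) \cong W(F)^\times/\mathbf{W^\times}(\mathcal{O}_v).$$
The second step applies \prettyref{prop:units}: $W(F)^\times$ is the pushout in abelian groups of $A:=F^\times/(F^\times)^2$ and $B:=1+I(F)_{\tor}$ along their intersection $C:=A\cap B$, and $\mathbf{W^\times}(\mathcal{O}_v)$ has the analogous pushout presentation. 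Quotienting a pushout by a compatibly decomposed pushout of subgroups gives a pushout of quotients, so $W(F)^\times/\mathbf{W^\times}(\mathcal{O}_v)$ is the pushout of $A/\mathrm{unr}_A$ and $B/\mathrm{unr}_B$ over the image of $C/\mathrm{unr}_C$. By \prettyref{lem:contract}, the two outer quotient corners are $\mathbb{Z}/2$ (generated by $[\pi]$) and $W(\kappa(v))_{\tor}^{(1)}$.

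The substantive step is to compute the image of $C/\mathrm{unr}_C$ inside the direct sum $\mathbb{Z}/2\oplus W(\kappa(v))_{\tor}^{(1)}$. As recalled in the proof of \prettyref{prop:units}, $C$ consists of the square classes $\langle u\rangle$ for $u$ a sum of squares in $F$. The map to $\mathbb{Z}/2$ is $\langle u\rangle \mapsto v(u)\bmod 2$, and since sums of squares form a subgroup of $F^\times$ its image is a subgroup of $\mathbb{Z}/2$, which I expect to show is nontrivial precisely when $\pi$ is a sum of squares: one direction is immediate ($\langle \pi\rangle$ then lies in $C$ and hits $[\pi]$), while the other requires the arithmetic claim that a sum of squares of odd valuation can exist only if $\pi$ itself is a sum of squares. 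When $\pi$ is a sum of squares, $\langle\pi\rangle\in C$ maps to $[\pi]$ on the left and, via the residue formula from the proof of \prettyref{lem:contract}, to $\partial(\langle\pi\rangle-1)=\langle 1\rangle\in W(\kappa(v))_{\tor}^{(1)}$ on the right.

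The final step is a pushout calculation in each case. If $\pi$ is not a sum of squares, the left leg is zero and the pushout is the direct sum $\mathbb{Z}/2\oplus W(\kappa(v))_{\tor}^{(1)}$, whence $\mathcal{A}\cong\mathbb{Z}/2$. If $\pi$ is a sum of squares, we quotient $\mathbb{Z}/2\oplus W(\kappa(v))_{\tor}^{(1)}$ by the cyclic subgroup generated by $([\pi],\langle 1\rangle)$ (the sign being irrelevant by the torsion identity below). The key identity
$$\langle 1\rangle \boxplus_1 \langle 1\rangle = 2\langle 1\rangle - 2\langle 1\rangle = 0$$
shows $\langle 1\rangle$ is $\boxplus_1$-self-inverse, from which one checks directly that the natural map $W(\kappa(v))_{\tor}^{(1)} \to \mathrm{pushout}$, $y\mapsto (0,y)$, is an isomorphism, giving $\mathcal{A}=0$. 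The main obstacle I foresee is the sum-of-squares analysis in step three; the remaining verifications are formal pushout manipulation plus the straightforward $\boxplus_1$ computation.
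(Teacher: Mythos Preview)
Your approach is essentially identical to the paper's: reduce to $W^\times$ via \prettyref{cor:gww}, use the pushout of \prettyref{prop:units} together with its unramified analogue, pass to the pushout of quotients, and identify the corners using \prettyref{lem:contract}. The only differences are cosmetic---the paper performs the $GW^\times\to W^\times$ reduction last rather than first, and it justifies the unramified pushout decomposition by inspecting \cite[Proposition 2.24]{knebusch:kolster} whereas you instead spell out the final pushout quotient via the $\boxplus_1$ identity; the sum-of-squares subtlety you flag is not addressed in the paper's proof either.
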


\begin{proof}
We first prove the result for $H^1_v(\mathcal{O}_v,\mathbf{W^\times})=
W(F)^\times/\mathbf{W^\times}(\mathcal{O}_v)$ instead. In that case,
the result will follow from the pushout square of
\prettyref{prop:units} and the previous lemma. First, we claim
that the intersection of the pushout square with
$\mathbf{W^\times}(\mathcal{O}_v)$ results in the corresponding pushout
square of the unramified subgroups:
\begin{center}
\begin{minipage}[c]{10cm}
\begin{tikzpicture}[scale=1.2,arrows=->]
\node (A01) at (0,1) {$\mathcal{O}_v^\times/(\mathcal{O}_v^\times)^2
\cap (1+\mathbf{I_{\tor}}(\mathcal{O}_v))$};
\node (A) at (0,0) {$\mathcal{O}_v^\times/(\mathcal{O}_v^\times)^2$};
\node (GmGm) at (4,1) {$(1+\mathbf{I_{\tor}}(\mathcal{O}_v))$};
\node (X) at (4,0) {$\mathbf{W^\times}(\mathcal{O}_v)$};
\draw (A01) to  (A);
\draw (A) to (X);
\draw (A01) to (GmGm);
\draw (GmGm) to (X);
\end{tikzpicture}
\end{minipage}
\end{center}
An inspection of the proof of \cite[Proposition
2.24]{knebusch:kolster} shows that the unramified units are precisely
those that can be written as product of an unramified square class and
an unramified element from $(1+\mathbf{I_{\tor}})$: if a unit is
unramified, we can write it as sum $\sum\langle u_i\rangle$ with
$u_i\in\mathcal{O}_v^\times$. In loc.cit., the element can be
decomposed multiplicatively as the product of the square class
$(-1)^n\prod\langle u_i\rangle$ and an element of the form $1+a$ with
$a$ nilpotent. In particular, both factors have to be unramified.
Therefore, intersecting the pushout square with
$\mathbf{W^\times}(\mathcal{O}_v)$ indeed produces exactly the
unramified subgroups. 

It follows, that the quotient of
the squares is also a pushout square:
\begin{center}
\begin{minipage}[c]{10cm}
\begin{tikzpicture}[scale=1.2,arrows=->]
\node (A01) at (0,1) {$\mathcal{S}$};
\node (A) at (0,0) {$H^1_v(\mathcal{O}_v,\mathbb{G}_m/2)\cong\mathbb{Z}/2$};
\node (GmGm) at (4,1)
{$H^1_v(\mathcal{O}_v,1+\mathbf{I_{\tor}})\cong W(\kappa(v))_{\tor}^{(1)}$}; 
\node (X) at (4,0) {$H^1_v(\mathcal{O}_v,\mathbf{W^\times})$};
\draw (A01) to  (A);
\draw (A) to (X);
\draw (A01) to (GmGm);
\draw (GmGm) to (X);
\end{tikzpicture}
\end{minipage}
\end{center}
The identifications in the lower left and upper right corner follow
from \prettyref{lem:contract}. Similarly, the $\mathcal{S}$ in the
upper left corner is $H^1_v(\mathcal{O}_v,\mathbb{G}_m/2\cap
(1+\mathbf{I_{\tor}}))$. As can be seen from the first isomorphism of
\prettyref{lem:contract}, this is $\mathbb{Z}/2$ or $0$ if
$\pi\in(1+I(F)_{\tor})$ or not, respectively. But the intersection 
$F^\times/(F^\times)^2\cap (1+I(F)_{\tor})$ consists precisely of the
sums of squares. Hence the statement for $\mathbf{W^\times}$. 

The statement for $\mathbf{GW^\times}$ follows from this together with
\prettyref{cor:gww}, since $-1\in
GW(F)$ is unramified, hence does not contribute to the quotient.
\end{proof}

\begin{remark}
The above implies in particular that we have computed the first 
contraction $\mathbf{W^\times}_{-1}$. In that language, the above
result reads
$$
\mathbf{W^\times}_{-1}(L)\cong\mathcal{A}\oplus W(L)_{\tor}^{(1)}.
$$
\end{remark}

\begin{proposition}
Let $F$ be a field in $\mathcal{F}_k$. Then we have the following
short exact sequence 
$$
1\rightarrow GW(F)^\times\rightarrow
GW(F(T))^\times\stackrel{\sum \partial_{(\pi)}^\pi}{\longrightarrow} 
\bigoplus_{\pi\in F[T]}\left(\mathcal{A}_\pi\oplus
  W(F[T]/(\pi))_{\tor}^{(1)}\right)\rightarrow 0, 
$$
where the direct sum is taken over all irreducible monic polynomials.
\end{proposition}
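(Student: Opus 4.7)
The plan is to reduce the statement to the analogous Gersten-type sequence for $W(F)^\times$ via \prettyref{cor:gww}, and to extract that from the pushout description of units in \prettyref{prop:units} applied to $F(T)$, combined with Milnor's split exact sequence \cite[Theorem 5.3]{milnor} and the classical Gersten sequence for square classes over $\mathbb{A}^1_F$.

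The easy parts I would dispose of first. The composition $(\sum\partial_{(\pi)}^\pi)\circ(GW(F)^\times\to GW(F(T))^\times)$ vanishes because every element of $GW(F)^\times$ is unramified at every $(\pi)$ in $F[T]$ by axiom (A3ii). Injectivity of $GW(F)^\times\to GW(F(T))^\times$ follows from the retraction given by specialization at $T=0$, again by axiom (A3ii). For exactness at the middle and surjectivity, the strategy is to decompose both sides using \prettyref{prop:units}. On the square-class factor $F(T)^\times/(F(T)^\times)^2$ one has the classical short exact sequence
$$
1\to F^\times/(F^\times)^2\to F(T)^\times/(F(T)^\times)^2\to \bigoplus_\pi \mathbb{Z}/2\to 0,
$$
where the residue at $\pi$ is $v_\pi\bmod 2$, matching \prettyref{lem:contract}(i). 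On the nilpotent factor $1+I(F(T))_{\tor}$ one restricts Milnor's split exact sequence to torsion and translates it multiplicatively, producing
$$
1\to 1+I(F)_{\tor}\to 1+I(F(T))_{\tor}\to \bigoplus_\pi W(F[T]/(\pi))_{\tor}^{(1)}\to 0,
$$
with the modified addition $\boxplus_1$ on the target, as \prettyref{lem:contract}(ii) demands.

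To combine these two sequences, one needs to verify that the pushout square of \prettyref{prop:units} persists at the level of $W(F)^\times$, of $W(F(T))^\times$, and also on the residues, where it produces precisely the decomposition $\mathcal{A}_\pi\oplus W(F[T]/(\pi))_{\tor}^{(1)}$ of \prettyref{prop:contract}. The intersection term, consisting of the sums of squares inside $F(T)^\times/(F(T)^\times)^2$, contributes to $\mathcal{A}_\pi=\mathbb{Z}/2$ exactly when $\pi$ is not a sum of squares. A pushout diagram chase (or an application of the five lemma) then yields the desired sequence for $W^\times$; the extension by $\{\pm 1\}$ from \prettyref{cor:gww} promotes it to $GW^\times$, since $-1$ is unramified everywhere and therefore appears identically in the kernel and the middle term of the sequence without affecting the residues.

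The main obstacle I expect is twofold. First, one must check that Milnor's split exact sequence restricts cleanly to torsion subgroups (both for $I(F)_{\tor}$, for $I(F(T))_{\tor}$, and for a compatible splitting of the torsion in $\bigoplus_\pi W(F[T]/(\pi))$), so that an arbitrary torsion residue genuinely lifts to a torsion element of $I(F(T))$; the surjectivity step in the proof of \prettyref{lem:contract}(ii) already contains the crux of this argument and should globalize. Second, one must show that the $\mathcal{A}_\pi$ component from the square-class side glues correctly with the $\boxplus_1$-structure on $W(F[T]/(\pi))_{\tor}^{(1)}$ from the nilpotent side when passing from the pushout to the residues; the algebraic identity $(\langle\pi\rangle-1)^2=2-2\langle\pi\rangle$ of \prettyref{lem:contract}(ii) is precisely what produces $\boxplus_1$, and the bookkeeping required to extend that computation consistently across all primes $\pi$ is the heart of the argument.
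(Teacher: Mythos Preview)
Your proposal is correct and follows essentially the same route as the paper: reduce from $GW^\times$ to $W^\times$ via \prettyref{cor:gww}, then use the decomposition of units from \prettyref{prop:units} into a square-class part (handled by the classical Gersten sequence for $\mathbb{G}_m/2$ over $\mathbb{A}^1_F$) and a $1+I_{\tor}$ part (handled by restricting Milnor's split exact sequence to torsion). The paper's write-up is slightly more hands-on---it verifies surjectivity by explicitly lifting a torsion residue $\overline{\alpha}$ to $1+\alpha$ via the splitting of Milnor's sequence, and checks exactness in the middle by decomposing a unit in the kernel and treating the two factors separately---whereas you package the same steps as a pushout/diagram argument; but the ingredients and the order in which they are assembled are the same.
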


\begin{proof}
As in the proof of \prettyref{prop:contract}, it suffices to consider
$W(F)^\times$ instead of $GW(F)^\times$.
Recall Milnor's split exact sequence \cite[Theorem 5.3]{milnor}
$$
0\rightarrow W(F)\rightarrow W(F(T))\rightarrow
\bigoplus_{\pi} W(F[T]/(\pi))\rightarrow 0. 
$$
This immediately settles injectivity $W(F)^\times\hookrightarrow
W(F(T))^\times$. 

For surjectivity of $\sum \partial_{(\pi)}^\pi$, let
$\overline{\alpha}\in\bigoplus_{\pi\in F[T]}W(F[T]/(\pi)))$ be
torsion. Using the splitting of Milnor's sequence, there exists
$\alpha\in W(F(T))_{\tor}$ lifting $\overline{\alpha}$. Setting
$1+\alpha\in W(F(T))^\times$, we have
$$
\left(\sum\partial_{(\pi)}^\pi\right)(1+\alpha)=\overline{\alpha}.
$$
If $\pi$ is such that $\mathcal{A}_\pi$ is $\mathbb{Z}/2$, then
$\langle\pi\rangle$ provides a lift of $1\in \mathcal{A}_\pi$. 

Finally, exactness in the middle. Let $\langle u\rangle\in
F(T)^\times/(F(T)^\times)^2$ be a square class unit with
$(\sum\partial_{(\pi)}^\pi)(\langle u\rangle)=0$. Decomposing $u$ into
primes, it suffices to consider the two cases $u=\pi$ irreducible
monic and $u\in F^\times$ constant. In the second case, $u$ is
obviously in the image and we are done. 
Let $1+a\in (1+I(F(T))_{\tor})$ with
$(\sum\partial_{(\pi)}^\pi)(1+a)=0$. By \prettyref{prop:contract},
this means that $a$ maps to $0$ under the residue morphism
$W(F(T))\rightarrow \bigoplus_{\pi}W(F[T]/(\pi))$ in Milnor's
sequence. Exactness of Milnor's sequence implies that $a$ is in the
image of $W(F)_{\tor}$. Hence the unit $1+a$ lies in the image of
$W(F)^\times$. 
\end{proof}

We record a similar but much easier computation for the torsion in the
Witt groups. This will lead to an identification of the higher
contractions of the units.

\begin{proposition}
Let $F$ be a field in $\mathcal{F}_k$, let $v$ be a discrete
valuation on $F$ and let $\pi$ be a uniformizing element for $v$. Then
there exists an isomorphism 
$$
H^1_v(\mathcal{O}_v,\mathbf{W_{\tor}^{(n)}})=
W(F)_{\tor}^{(n)}/\mathbf{W_{\tor}^{(n)}}(\mathcal{O}_v)
\stackrel{\cong}{\longrightarrow} 
W(\kappa(v))_{\tor}^{(n+1)}.
$$
Moreover, there is a short exact sequence 
$$
1\rightarrow W(F)_{\tor}^{(n)}\rightarrow
W(F(T))_{\tor}^{(n)}\stackrel{\sum \partial_{(\pi)}^\pi}{\longrightarrow} 
\bigoplus_{\pi\in F[T]}  W(F[T]/(\pi))_{\tor}^{(n+1)}\rightarrow 0, 
$$
where the direct sum is taken over all irreducible monic polynomials.
\end{proposition}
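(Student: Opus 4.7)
The plan is to imitate the proofs of \prettyref{lem:contract}(ii) and of the preceding proposition for $\mathbf{GW^\times}$ almost verbatim, the key simplification being that we no longer have to track a square-class summand $\mathcal{A}$ --- only the $I_{\tor}$-part is present, just with a deformed group law.

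For the local computation I would first observe that the underlying set of $W(F)_{\tor}^{(n)}$ does not depend on $n$, so set-theoretically the quotient $W(F)_{\tor}^{(n)}/\mathbf{W_{\tor}^{(n)}}(\mathcal{O}_v)$ coincides with $W(F)_{\tor}/\mathbf{I_{\tor}}(\mathcal{O}_v)$. It therefore suffices to show that the classical residue $\partial\colon W(F)_{\tor}\to W(\kappa(v))_{\tor}$ is surjective with kernel $\mathbf{I_{\tor}}(\mathcal{O}_v)$, and both of these are already established inside the proof of \prettyref{lem:contract}(ii): surjectivity via the Lindel reduction combined with Milnor's split sequence, and the kernel statement being immediate from the definition of the unramified elements.

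The substantive step is verifying that $\partial$ carries $\boxplus_n$ on the source to $\boxplus_{n+1}$ on the target. I would reuse the decomposition from \prettyref{lem:contract}(ii): modulo $\mathbf{I_{\tor}}(\mathcal{O}_v)$ every torsion element has a representative of the form $(\langle\pi\rangle-1)c$ with $c$ unramified. Using the identity $(\langle\pi\rangle-1)^2 = 2 - 2\langle\pi\rangle = -2(\langle\pi\rangle-1)$ in $W(F)$, one then computes
\begin{align*}
(\langle\pi\rangle-1)a \boxplus_n (\langle\pi\rangle-1)b
&= (\langle\pi\rangle-1)(a+b) + (-2)^n(\langle\pi\rangle-1)^2 ab \\
&= (\langle\pi\rangle-1)\bigl(a + b + (-2)^{n+1}ab\bigr) \\
&= (\langle\pi\rangle-1)(a\boxplus_{n+1} b),
\end{align*}
and applying $\partial$ converts $\boxplus_n$-addition on the source into $\boxplus_{n+1}$-addition on the target, giving the asserted group isomorphism.

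For the short exact sequence on $\mathbb{A}^1$, I would transpose the proof of the analogous result for $\mathbf{GW^\times}$ almost word for word, substituting the local computation just established for its appeal to \prettyref{prop:contract}. Injectivity of $W(F)_{\tor}^{(n)}\hookrightarrow W(F(T))_{\tor}^{(n)}$ is inherited from Milnor's split exact sequence at the level of underlying sets; surjectivity of $\sum\partial_{(\pi)}^\pi$ uses Milnor's splitting to lift a given torsion target class to $W(F(T))_{\tor}$, whose residues are then, by the first part, the prescribed ones in $W(F[T]/(\pi))_{\tor}^{(n+1)}$; and exactness in the middle follows because any element of $W(F(T))_{\tor}^{(n)}$ with vanishing residues must, by exactness of Milnor's sequence, come from $W(F)_{\tor}$, and hence from $W(F)_{\tor}^{(n)}$ as a set. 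The only real obstacle is the algebraic identity displayed above, but this is essentially the same computation that already drove \prettyref{lem:contract}(ii), so no genuinely new difficulty arises in passing to arbitrary $n$.
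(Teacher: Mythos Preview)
Your proposal is correct and follows essentially the same route as the paper: both reduce the set-level bijectivity to the ordinary residue on $W_{\tor}$ (kernel by definition of unramified elements, surjectivity via \prettyref{lem:contract}), and both establish the homomorphism property by the identical computation using $(\langle\pi\rangle-1)^2=-2(\langle\pi\rangle-1)$ to show $\boxplus_n$ on representatives $(\langle\pi\rangle-1)a$ goes to $\boxplus_{n+1}$ after applying $\partial$. The exact sequence over $\mathbb{A}^1$ is likewise handled in both by appealing to Milnor's split sequence restricted to torsion.
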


\begin{proof}
The second part without the modifying $(n)$-s is a direct consequence of
Milnor's split exact sequence \cite[Theorem 5.3]{milnor}, restricted
to the torsion. 

For the first part, note that the morphism $W(F)_{\tor}\rightarrow
W(\kappa(v))_{\tor}$ is simply the residue restricted to
torsion, and it is split by $a\mapsto (\langle\pi\rangle-1)a$. As a
set map, it is the same residue morphism as for the Witt groups
with the usual addition. Therefore, as a set map, injectivity of
$\theta_\pi$ is the Gersten conjecture for Witt groups in a very
simple case, surjectivity follows as in \prettyref{lem:contract}. 

We need to check the homomorphism property. By the form of the
splitting mentioned above, we need to compute the product
\begin{eqnarray*}
\left((\langle\pi\rangle-1)a\right)\boxplus_n
\left((\langle\pi\rangle-1)b\right)&=&
(\langle\pi\rangle-1)(a+b)+(-2)^n(\langle\pi\rangle-1)^2ab \\
&=&(\langle\pi\rangle-1)(a+b+(-2)^{n+1}ab).
\end{eqnarray*}
This now implies both claims.
\end{proof}

\begin{remark}
The above can be reformulated in the language of contractions as
follows: for $n\geq 2$, we have an isomorphism of abelian groups
$$
\mathbf{GW}^\times_{-n}(L)\cong W(L)_{\tor}^{(n)}.
$$
\end{remark}

\section{Gersten resolution and strict 
\texorpdfstring{$\Ao$}{A1}-invariance}
\label{sec:gersten}

In this section, we discuss a Gersten-type resolution for the
unramified sheaf $\mathbf{GW^\times}$. The description of units in
\prettyref{prop:units} provides an exact sequence which decomposes
$\mathbf{GW^\times}$ into a part coming from $\mathbf{K_1/2}$ and a part
$\mathbf{NQ}$ which is closely related to the torsion in the
fundamental ideal $\mathbf{I_{\tor}}$. From the Gersten resolution, we
can deduce strict $\Ao$-invariance. 

\subsection{The unramified sheaf 
\texorpdfstring{$\mathbf{NQ}$}{NQ}} 

We first introduce an unramified sheaf $NQ$. Recall from
\prettyref{prop:units} that the units in the Grothendieck-Witt ring
decompose into a square-class part and a part coming from nilpotent
elements in the Witt ring. The sheaf $NQ$ deals with the part of the
units of the Witt ring which comes from nilpotent elements:

\begin{definition}
\label{def:nq}
For a field $F$, we denote 
$$
NQ(F)=(1+I(F)_{\tor})/\left(F^\times/(F^\times)^2\cap
  (1+I(F)_{\tor})\right)\cong W^\times(F)/(F^\times),
$$
which is the quotient of the horizontal maps of the pushout square in 
\prettyref{prop:units}. 
\end{definition}

The notation $NQ$ is
supposed to suggest that this group is the \textbf{Q}uotient of the
subgroup of the units coming from \textbf{N}ilpotent elements of the
Witt ring. 

Since $NQ$ is defined as the quotient of $W^\times$
modulo the square classes $\mathbb{G}_m/2$, the data (D1)-(D3) for
$W^\times$ induce corresponding data for $NQ$. With these data, $NQ$
extends to an unramified sheaf of abelian groups. For an irreducible
smooth scheme $X$ with function field $F$, we denote by  
$$
\mathbf{NQ}(X)=\bigcap_{x\in
  X^{(1)}}\mathbf{NQ}(\mathcal{O}_x)\subseteq \mathbf{NQ}(F)
$$ 
the subgroup of unramified elements in $NQ(F)$. 

\subsection{Gersten resolution for
  \texorpdfstring{$I_{\tor}$}{torsion}}

We  next discuss the Gersten resolution for the torsion in the
fundamental ideal of the Witt ring. 
The assignment $L\in\mathcal{F}_k\mapsto I^\ast(L)$ mapping a field to
the $\mathbb{Z}$-graded abelian group of powers $I^n(L)$ of the
fundamental ideal in the Witt ring $W(L)$ extends to a
$\mathbb{Z}$-graded family of strictly $\Ao$-invariant sheaves of
groups $\mathbf{I^\ast}$ on $\smk$. This is a 
consequence of Example 3.34, Lemma 3.35 and Theorem 2.46 of
\cite{morel:book}. 

Now we consider for $n\in \mathbb{N}$ the morphism
$2^n:\mathbf{I^\ast}\rightarrow \mathbf{I^\ast}$. As this concerns
only the abelian group structure of $I^\ast$, it commutes with all the
data (D1)-(D4) making $\mathbf{I^\ast}$ an unramified sheaf of abelian
groups.  Using Lemma 3.35 and Theorem 2.46 of \cite{morel:book} again,
we have the following result:

\begin{lemma}
The assignment 
$$
I^\ast[2^n]:\mathcal{F}_k\rightarrow \mathcal{A}b:L\mapsto
\ker\left(2^n:I^\ast\rightarrow I^\ast\right)
$$
extends to a strictly $\Ao$-invariant sheaf of abelian groups. 
The same is true for the filtered colimit
$$
I^\ast_{\tor}=I^\ast[2^\infty]=\operatorname{colim}I^\ast[2^n].
$$ 
\end{lemma}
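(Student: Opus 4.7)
The plan is to reapply the machinery Morel uses for $\mathbf{I^\ast}$ itself, namely Example 3.34, Lemma 3.35, and Theorem 2.46 of \cite{morel:book}, now for the kernel of $2^n$. Since $\mathbf{I^\ast}$ is established as a strictly $\Ao$-invariant sheaf of abelian groups and multiplication by $2^n$ is an endomorphism of sheaves of abelian groups, the assignment $L\mapsto \ker(2^n:I^\ast(L)\to I^\ast(L))$ is a subfunctor. The goal is to show that it extends to an unramified $\mathcal{F}_k$-datum satisfying Morel's axioms for strict $\Ao$-invariance.

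First, I would observe that all the data (D1)-(D3) and axioms (A1)-(A4) making $\mathbf{I^\ast}$ an unramified sheaf are homomorphisms of abelian groups (residues and specializations are additive, and field extension maps are induced from ring homomorphisms), hence they restrict to the kernel subsheaves $I^\ast[2^n]$. The verification of (A1)-(A4) for $I^\ast[2^n]$ then reduces to intersecting the corresponding cartesian/intersection statements for $\mathbf{I^\ast}$ with $\ker(2^n)$, which is straightforward since being annihilated by $2^n$ is preserved by every map in the datum. In particular, the Gersten-type complex for $I^\ast[2^n]$ at a smooth local scheme is obtained termwise as the kernel of $2^n$ acting on the Gersten complex of $\mathbf{I^\ast}$, and this is where Lemma 3.35 will enter: the key axiom to check is that this termwise kernel remains exact, which follows from the left-exactness of $\ker$ applied to a short exact sequence of Gersten complexes together with the exactness of the Gersten complex for $\mathbf{I^\ast}$.

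The input for Theorem 2.46 (strict $\Ao$-invariance from unramified data) involves the $\Ao$-invariance on essentially smooth local schemes and compatibility with contractions. Both pass through: the pullback $\mathbf{I^\ast}(X)\to \mathbf{I^\ast}(X\times \Ao)$ is an isomorphism commuting with $2^n$, so its kernel $I^\ast[2^n](X)\to I^\ast[2^n](X\times\Ao)$ is also an isomorphism; analogously, $I^\ast[2^n]$ inherits the contraction axioms from $\mathbf{I^\ast}$. For the filtered colimit $I^\ast_{\tor}=\colim_n I^\ast[2^n]$, strict $\Ao$-invariance is preserved because on the Noetherian site $\smk$ the Nisnevich cohomology of a fixed finite-dimensional $X$ commutes with filtered colimits of abelian sheaves, so $\Ao$-invariance of the cohomology of each $I^\ast[2^n]$ passes to the colimit; equivalently, the unramified $\mathcal{F}_k$-data form a category stable under filtered colimits, and all the relevant axioms for Morel's Theorem 2.46 are preserved under them.

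The main obstacle is the exactness of the kernel of the Gersten complex alluded to above: while the set-theoretic axioms (A1)-(A4) pass trivially, verifying that $I^\ast[2^n]$ genuinely extends to a sheaf whose cohomology is $\Ao$-invariant requires knowing that the cokernel $\mathbf{I^\ast}/2^n$ is itself controlled enough, or equivalently that $2^n$ acts compatibly with the resolutions used in the proof of Theorem 2.46. This is where one genuinely uses that the Gersten complex of $\mathbf{I^\ast}$ is a complex of abelian group sheaves (not just pointed sheaves), so that $\ker(2^n)$ applied termwise gives a bona fide complex of sheaves computing the cohomology of $I^\ast[2^n]$.
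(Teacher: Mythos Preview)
Your proposal is correct and follows essentially the same approach as the paper: the paper's argument (given in the paragraph preceding the lemma) is simply that $2^n$ commutes with all the unramified data (D1)--(D4) since it only involves the abelian group structure, so one can reapply Lemma 3.35 and Theorem 2.46 of \cite{morel:book}. Your write-up elaborates the details of this reduction (and the filtered colimit step) more carefully than the paper does, but the underlying strategy is identical.
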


Using this together with the results from \cite{morel:book}, we have a
Gersten resolution for $\mathbf{I_{\tor}}$.  

\begin{lemma}
\label{lem:itor}
\begin{enumerate}[(i)]
\item 
There is a complex
$$
0\rightarrow \mathbf{I_{\tor}}(X)\rightarrow
I(K)_{\tor}\rightarrow\bigoplus_{x_1\in
  X^{(1)}} W(\kappa(x_1))_{\tor}\rightarrow \cdots \rightarrow
\bigoplus_{x_{n}\in
  X^{(n)}}W(\kappa(x_{n}))_{\tor}\rightarrow 0
$$
for any essentially smooth $k$-scheme $X$ of dimension $n$ with
function field $K$.  
\item This complex is a Nisnevich flasque resolution of
  $\mathbf{I_{\tor}}$. 
\item For $X$ essentially smooth and local, the complex is exact. 
\item The cohomology of this complex is $\Ao$-invariant for $X$ an
  essentially smooth scheme. 
\end{enumerate}
\end{lemma}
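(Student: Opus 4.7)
The plan is to deduce all four claims from Morel's general Gersten machinery for strictly $\Ao$-invariant sheaves, applied directly to the sheaf $\mathbf{I_{\tor}}$. By the preceding lemma, $\mathbf{I_{\tor}}$ is a strictly $\Ao$-invariant unramified sheaf of abelian groups, so the Rost--Schmid complex construction in \cite[Chapter~5]{morel:book} furnishes a canonical Gersten-type complex for it, which is automatically a flasque Nisnevich resolution (giving (i) and (ii)), is exact on essentially smooth local schemes (giving (iii)), and whose cohomology is $\Ao$-invariant on essentially smooth schemes (giving (iv)). This reduces the entire lemma to identifying the terms of that canonical complex.

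For the identification, recall that the $i$-th term of the Gersten complex of a strictly $\Ao$-invariant sheaf $\mathcal{F}$ is $\bigoplus_{x_i \in X^{(i)}} \mathcal{F}_{-i}(\kappa(x_i))$. Since the contraction $(-)_{-1}$ is defined as the kernel of an evaluation map, it commutes with all limits and with filtered colimits, and hence
$$
(\mathbf{I_{\tor}})_{-i} \;\cong\; (\mathbf{I}_{-i})_{\tor}.
$$
By Morel's computation $(\mathbf{I^n})_{-1} \cong \mathbf{I^{n-1}}$ (with the standard convention $\mathbf{I^n} = \mathbf{W}$ for $n \leq 0$), one has $\mathbf{I}_{-i} \cong \mathbf{W}$ for all $i \geq 1$. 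Combined with the classical identity $W(L)_{\tor} = I(L)_{\tor}$ in characteristic $\neq 2$ (torsion forms have even rank), the $i$-th term of the complex is therefore $\bigoplus_{x_i \in X^{(i)}} W(\kappa(x_i))_{\tor}$, which matches the statement; the initial term $I(K)_{\tor}$ is consistent as well.

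The main pitfall to avoid is the temptation to produce the resolution by applying the left-exact torsion functor term-by-term to Morel's existing Gersten resolution of $\mathbf{I}$. That approach yields only a subcomplex: the connecting residue maps have images that need not be pure $2$-torsion, so exactness at the middle terms would require a separate argument, and the flasqueness of the subsheaves is also non-obvious. Routing everything through Morel's general construction applied directly to $\mathbf{I_{\tor}}$ bypasses these issues, since his construction is functorial in the pair consisting of the sheaf and its contractions, both of which we have identified. The substantive input is then simply the previous lemma --- strict $\Ao$-invariance of $\mathbf{I_{\tor}}$ --- after which (i)--(iv) are formal.
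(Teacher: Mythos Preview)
Your approach is essentially the paper's: both invoke Morel's Rost--Schmid machinery (Theorems 5.31, 5.38, 5.41 and Corollaries 5.43, 5.44 of \cite{morel:book}) applied directly to the strictly $\Ao$-invariant sheaf $\mathbf{I_{\tor}}$ furnished by the preceding lemma; the paper is simply terser, citing the results without spelling out the contraction computation you give. One small correction: the ``classical identity'' $W(L)_{\tor} = I(L)_{\tor}$ is false in general (take $L = \mathbb{F}_p$ with $p\equiv 3\bmod 4$, where $W(L)\cong\mathbb{Z}/4$ is entirely torsion but $I(L)$ is a proper subgroup), so drop that aside --- your argument does not need it, since $(\mathbf{I_{\tor}})_{-i} \cong (\mathbf{I}_{-i})_{\tor} \cong \mathbf{W}_{\tor}$ already identifies the $i$-th term as $W(\kappa(x_i))_{\tor}$ directly.
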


\begin{proof}
By \cite[Corollary 5.44]{morel:book}, the Gersten resolution can be
identified with what Morel calls Rost-Schmid complex.  

(i) follows from \cite[Theorem 5.31]{morel:book}, (ii) is 
\cite[Corollary 5.43]{morel:book},(iii) is \cite[Theorem
5.41]{morel:book}, and (iv) is \cite[Theorem
5.38]{morel:book}.
\end{proof}  

\subsection{Resolution for \texorpdfstring{$\mathbf{NQ}$}{NQ}}

The next step is an appropriate modification of the resolution for
$\mathbf{I_{\tor}}$ to obtain a resolution for $\mathbf{NQ}$. 

\begin{lemma}
\label{lem:nq}
\begin{enumerate}[(i)]
\item 
There is a complex
$$
1\rightarrow \mathbf{1+I_{\tor}}(X)\rightarrow
1+I(K)_{\tor}\rightarrow\bigoplus_{x_1\in
  X^{(1)}} W(\kappa(x_1))_{\tor}^{(1)}\rightarrow $$
$$\rightarrow\cdots \rightarrow
\bigoplus_{x_{n}\in
  X^{(n)}}W(\kappa(x_{n}))_{\tor}^{(n)}\rightarrow 0
$$
for any essentially smooth $k$-scheme $X$ of dimension $n$ with
function field $K$. The cohomology of this complex is $\Ao$-invariant
for $X$ an essentially smooth scheme, and the complex is exact for $X$
an essentially smooth local scheme. 
\item The complex of (i) contains the following subcomplex: 
$$
1\rightarrow \mathbb{G}_m/2(X)\cap \mathbf{1+I_{\tor}}(X)\rightarrow 
(K^\times/(K^\times)^2)\cap (1+I(K)_{\tor})\rightarrow
\bigoplus_{x_1\in X^{(1)}}\mathcal{S}_{\pi(x_1)}\rightarrow 0,
$$
where $\pi(x_1)$ is a uniformizer for the point $x_1$ and
$\mathcal{S}_{\pi(x_1)}=\mathbb{Z}/2$ or $0$ if the square class of
the uniformizer $\pi$ is a sum of squares or not, respectively.
The cohomology of this complex is $\Ao$-invariant
for $X$ an essentially smooth scheme, and the complex is exact for $X$
an essentially smooth local scheme. 
\item The statements in (i) also hold for 
$$
1\rightarrow \mathbf{NQ}(X)\rightarrow
NQ(K)\rightarrow\bigoplus_{x_1\in
  X^{(1)}} W(\kappa(x_1))_{\tor}^{(1)}/\mathcal{S}_{\pi(x_1)}\rightarrow $$
$$\rightarrow\cdots \rightarrow
\bigoplus_{x_{n}\in
  X^{(n)}}W(\kappa(x_{n}))_{\tor}^{(n)}\rightarrow 0
$$

\end{enumerate}
\end{lemma}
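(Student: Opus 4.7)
The plan is to deduce all three parts from Lemma \ref{lem:itor} by combining the set-theoretic bijection $\alpha \leftrightarrow 1+\alpha$ with the contraction computations of Section \ref{sec:contract}. The central observation is that the proposed complexes, viewed as diagrams of \emph{sets}, coincide with complexes derived from the Gersten resolution of $\mathbf{I_{\tor}}$; the modified addition $\boxplus_n$ on $W(\kappa(x_i))_{\tor}^{(i)}$ is precisely the group structure that turns these set-theoretic residue maps into honest homomorphisms, as established in Lemma \ref{lem:contract} and the proposition computing the higher contractions $\mathbf{W_{\tor}^{(n)}}$.

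For part (i), I would begin by identifying each term of the proposed complex with the corresponding term of the Gersten resolution of $\mathbf{I_{\tor}}$ from Lemma \ref{lem:itor}: on the generic point via $\alpha \leftrightarrow 1+\alpha$, and on codimension-$i$ points via the underlying set bijection between $W(\kappa(x_i))_{\tor}^{(i)}$ and $W(\kappa(x_i))_{\tor}$. The contraction computations of Section \ref{sec:contract} guarantee that the Gersten differentials of the $\mathbf{I_{\tor}}$-complex transport, along these bijections, to group homomorphisms with respect to the multiplicative structure on $1+I(K)_{\tor}$ and the $\boxplus_i$-structures on $W(\kappa(x_i))_{\tor}^{(i)}$. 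Thus the diagram is a complex, and is exact for local $X$, by Lemma \ref{lem:itor}(iii). The Nisnevich cohomology sheaves of the new complex have the same underlying sets as those of the $\mathbf{I_{\tor}}$-complex, so the $\mathbb{A}^1$-invariance provided by Lemma \ref{lem:itor}(iv) transfers directly to the modified-group cohomology.

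Part (ii) proceeds similarly, but now relative to the Gersten resolution of the square-class part $\mathbb{G}_m/2 \cap (1+\mathbf{I_{\tor}})$. By the computation following Proposition \ref{prop:units}, the intersection $F^\times/(F^\times)^2 \cap (1+I(F)_{\tor})$ is precisely the group of square classes of sums of squares, and Lemma \ref{lem:contract}(i) identifies the residues as the $\mathcal{S}_{\pi(x_1)}$ terms. The complex is only two-term because $(\mathbb{G}_m/2)_{-1}$ is already constant $\mathbb{Z}/2$ and further contractions vanish, so the restriction of the higher Gersten differentials of (i) to this subsheaf are zero. Exactness for local $X$ and $\mathbb{A}^1$-invariance of cohomology then come from the analogous properties of the Gersten resolution of $\mathbb{G}_m/2$. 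For part (iii), one takes termwise quotients of the complex in (i) by the subcomplex in (ii): the degree-$0$ quotient is $NQ(K)$ by Definition \ref{def:nq}, the degree-$1$ quotient is $\bigoplus W(\kappa(x_1))_{\tor}^{(1)}/\mathcal{S}_{\pi(x_1)}$, and the higher terms are unchanged. All statements for $\mathbf{NQ}$ follow from the long exact cohomology sequence of the resulting short exact sequence of complexes.

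The main obstacle is the bookkeeping around residue maps, especially verifying that (ii) genuinely defines a subcomplex of (i) that truncates after degree $1$, i.e.\ that the residue $\bigoplus_{x_1} \mathcal{S}_{\pi(x_1)} \to \bigoplus_{x_2} W(\kappa(x_2))_{\tor}^{(2)}$ inherited from (i) vanishes. This reduces to the vanishing of the higher contractions of $\mathbb{G}_m/2$, combined with the fact that the elements of $\mathcal{S}_{\pi(x_1)}$ lift canonically to square classes in $(K^\times/(K^\times)^2) \cap (1+I(K)_{\tor})$. Once this is verified, everything else follows formally from Lemma \ref{lem:itor} and the long exact sequence of Nisnevich cohomology.
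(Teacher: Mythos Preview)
Your proposal is correct and follows essentially the same route as the paper: part (i) is deduced from the Gersten resolution of $\mathbf{I_{\tor}}$ in Lemma~\ref{lem:itor} via the set-theoretic identification $\alpha\leftrightarrow 1+\alpha$ together with the homomorphism property of the residues for the modified additions established in Section~\ref{sec:contract}; part (ii) is obtained by restricting the Gersten resolution of $K^M_1/2\cong\mathbb{G}_m/2$ to the subgroup of square classes lying in $1+I_{\tor}$; and part (iii) follows from the quotient short exact sequence of complexes and its associated long exact sequence. The paper handles the truncation issue you flag more tersely, simply invoking the two-term Gersten resolution of $\mathbb{G}_m/2$, but the underlying logic is the same.
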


\begin{proof}
(i) is a direct consequence of \prettyref{lem:itor}. The complex is
obtained by taking the complex from \prettyref{lem:itor}, and changing
the group structures: $1+I(F)_{\tor}$ as a set is the same as
$I(F)_{\tor}$, but with multiplication instead of addition. Similarly,
the underlying sets of $W(F)_{\tor}^{(n)}$ and $W(F)_{\tor}$ are the
same, but the addition is different. From \prettyref{sec:contract}, we
know that the differentials of the complex of \prettyref{lem:itor} are
homomorphisms for the modified group structures. Moreover, the neutral
elements for $W(F)_{\tor}^{(n)}$ and $W(F)_{\tor}$ are the
same. Therefore, the property $\partial^2=0$ claimed in the present
lemma does not depend on the abelian group structure, only on the
underlying set maps and the neutral elements - hence it follows from
\prettyref{lem:itor}. Similarly, vanishing of cohomology in the case
of smooth local schemes does not depend on the abelian group
structure, only on the underlying sets and set maps. Hence it follows
from \prettyref{lem:itor}.

(ii) From the Gersten resolution for $K^M_1/2$, we know that the claim
holds for 
$$
1\rightarrow \mathbb{G}_m/2(X)\rightarrow
K^\times/(K^\times)^2\rightarrow 
\bigoplus_{x_1\in X^{(1)}}\mathbb{Z}/2\rightarrow 0.
$$
Now any $u\in K^\times$, is either unramified or generates the
corresponding $\mathbb{Z}/2$-copy in the quotient. Therefore,
restricting to the square classes in $1+I(K)_{\tor}$ provides the
diagram in (ii), hence proves the claim. In particular, the complex is
exact for $X$ an essentially  smooth local scheme.

(iii) follows by taking the quotient of the complex in (i) by the
subcomplex in (ii). Exactness for essentially smooth local schemes
follows by the long exact sequence for the corresponding exact
sequence of complexes. The same is true for $\Ao$-invariance.
\end{proof}

\subsection{Resolution for 
\texorpdfstring{$\mathbf{GW^\times}$}{units}}

\begin{proposition}
\label{prop:exact}
Let $X$ be an essentially smooth scheme of dimension $n$ over $X$ with
function field $K$.
We have the following exact sequence of complexes:
\begin{center}
\begin{minipage}[c]{12cm}
\begin{tikzpicture}[scale=1.1,arrows=->]
\node (L0b) at (2,10) {$0$};
\node (L0c) at (5,10) {$0$};
\node (L0d) at (8.5,10) {$0$};

\node (L1a) at (0.5,9) {$0$};
\node (L1b) at (2,9) {$\mathbb{G}_m/2(X)$};
\node (L1c) at (5,9) {$\mathbf{GW^\times}(X)$};
\node (L1d) at (8.5,9) {$\mathbf{NQ}(X)$};
\node (L1e) at (10.5,9) {$0$};
\draw (L1a) to  (L1b);
\draw (L1b) to  (L1c);
\draw (L1c) to  (L1d);
\draw (L1d) to  (L1e);
\draw (L0b) to (L1b);
\draw (L0c) to (L1c);
\draw (L0d) to (L1d);

\node (L2a) at (0.5,8) {$0$};
\node (L2b) at (2,8) {$K^\times/(K^\times)^2$};
\node (L2c) at (5,8) {$GW(K)^\times$};
\node (L2d) at (8.5,8) {$NQ(K)$};
\node (L2e) at (10.5,8) {$0$};
\draw (L2a) to  (L2b);
\draw (L2b) to  (L2c);
\draw (L2c) to  (L2d);
\draw (L2d) to  (L2e);
\draw (L1b) to (L2b);
\draw (L1c) to (L2c);
\draw (L1d) to (L2d);

\node (L3a) at (0.5,7) {$0$};
\node (L3b) at (2,7) {$\bigoplus\mathbb{Z}/2$};
\node (L3c) at (5,7) {$\bigoplus\left(\mathcal{A}_{x_1}\oplus W(\kappa(x_1))_{\tor}^{(1)}\right)$};
\node (L3d) at (8.5,7) {$\bigoplus W(\kappa(x_1))_{\tor}^{(1)}/\mathcal{S}_{x_1}$};
\node (L3e) at (10.5,7) {$0$};
\draw (L3a) to  (L3b);
\draw (L3b) to  (L3c);
\draw (L3c) to  (L3d);
\draw (L3d) to  (L3e);
\draw (L2b) to (L3b);
\draw (L2c) to (L3c);
\draw (L2d) to (L3d);

\node (L4b) at (2,6) {$0$};
\node (L4c) at (5,6) {$\bigoplus W(\kappa(x_2))_{\tor}^{(2)}$};
\node (L4d) at (8.5,6) {$\bigoplus W(\kappa(x_2))_{\tor}^{(2)}$};
\node (L4e) at (10.5,6) {$0$};
\draw (L4b) to  (L4c);
\draw (L4c) to  (L4d);
\draw (L4d) to  (L4e);
\draw (L3b) to (L4b);
\draw (L3c) to (L4c);
\draw (L3d) to (L4d);

\node (L5c) at (5,5) {$\vdots$};
\node (L5d) at (8.5,5) {$\vdots$};
\draw (L4c) to (L5c);
\draw (L4d) to (L5d);

\node (L6b) at (2,4) {$0$};
\node (L6c) at (5,4) {$\bigoplus W(\kappa(x_n))_{\tor}^{(n)}$};
\node (L6d) at (8.5,4) {$\bigoplus W(\kappa(x_n))_{\tor}^{(n)}$};
\node (L6e) at (10.5,4) {$0$};
\draw (L6b) to  (L6c);
\draw (L6c) to  (L6d);
\draw (L6d) to  (L6e);
\draw (L5c) to (L6c);
\draw (L5d) to (L6d);

\node (L7c) at (5,3) {$0$};
\node (L7d) at (8.5,3) {$0$};
\draw (L6c) to (L7c);
\draw (L6d) to (L7d);

\end{tikzpicture}
\end{minipage}
\end{center}
The vertical morphisms are the boundary maps discussed in
\prettyref{sec:contract}. The omitted index sets of the direct sums
are the respective sets of codimension $i$ points $x_i\in X^{(i)}$. 
\end{proposition}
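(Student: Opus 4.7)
The proposition asserts that the diagram is an exact sequence of three Gersten-type complexes, one for each of the unramified sheaves $\mathbb{G}_m/2$ (left), $\mathbf{GW^\times}$ (middle), and $\mathbf{NQ}$ (right). The plan is to identify each column as a Gersten complex and then verify exactness row by row.

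My first task is to identify the three columns. The left column is the classical Gersten resolution of $\mathbb{G}_m/2 = K_1^M/2$, which vanishes in codimensions $\geq 2$ and has $\bigoplus \mathbb{Z}/2$ in codimension $1$. The right column is the complex of \prettyref{lem:nq}(iii) already built for $\mathbf{NQ}$. The middle column is the Gersten complex for $\mathbf{GW^\times}$: at codim $1$ the group of residues is computed by \prettyref{prop:contract} as $\mathcal{A}_{x_1} \oplus W(\kappa(x_1))_{\tor}^{(1)}$, while the remark following \prettyref{prop:contract} identifies the higher contractions as $W(\kappa(x_i))_{\tor}^{(i)}$.

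Next I would establish the short exact sequence of sheaves $0 \to \mathbb{G}_m/2 \to \mathbf{GW^\times} \to \mathbf{NQ} \to 0$ responsible for the top two rows. At each $L \in \mathcal{F}_k$, the pushout square of \prettyref{prop:units} together with \prettyref{cor:gww} expresses $W(L)^\times$ as the amalgam of the square classes $L^\times/(L^\times)^2$ and $1 + I(L)_{\tor}$ along the sums of squares; hence the quotient by square classes is $NQ(L)$ by \prettyref{def:nq}. \prettyref{prop:intersect} then shows this SES restricts to the unramified sections on smooth $X$, so the top two rows are short exact.

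The codim-$1$ row reduces pointwise to the statement
\[
0 \to \mathbb{Z}/2 \to \mathcal{A}_{x_1} \oplus W(\kappa(x_1))_{\tor}^{(1)} \to W(\kappa(x_1))_{\tor}^{(1)}/\mathcal{S}_{x_1} \to 0,
\]
which can be checked by the $\mathcal{A}/\mathcal{S}$ dichotomy: if the uniformizer $\pi$ is a sum of squares, then $\mathcal{A}_{x_1}=0$ and $\mathcal{S}_{x_1}=\mathbb{Z}/2$, and the left-hand $\mathbb{Z}/2$ maps isomorphically onto $\mathcal{S}_{x_1} \subseteq W(\kappa(x_1))_{\tor}^{(1)}$; otherwise $\mathcal{A}_{x_1}=\mathbb{Z}/2$ and $\mathcal{S}_{x_1}=0$, and the $\mathbb{Z}/2$ maps isomorphically onto $\mathcal{A}_{x_1}$. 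The rows in codimension $\geq 2$ are trivial, since the left column is $0$ and the middle and right columns both equal $\bigoplus_{x_i} W(\kappa(x_i))_{\tor}^{(i)}$. Commutativity of all squares follows from the naturality of the residue and specialization morphisms, which is already embedded in their construction in \prettyref{sec:contract} via the ring homomorphisms relating $\mathbf{GW^\times}$, $\mathbf{W^\times}$ and $\mathbf{NQ}$.

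The main obstacle is the top row: the short exact sequence $0 \to \mathbb{G}_m/2 \to \mathbf{GW^\times} \to \mathbf{NQ} \to 0$ must be pinned down carefully because, by \prettyref{cor:gww}, $-1\in GW(F)$ is not a square class although $\langle -1\rangle \in W(F)$ is. One has to check that the apparent $\{\pm 1\}$-ambiguity in the kernel of $GW(F)^\times \to NQ(F)$ is absorbed on the unramified level (since $-1$ is constant, hence unramified and killed by every residue) and contributes only to the middle term's unramified sections, not to the differentials of the complex. Once this subtlety is handled, the rest of the diagram assembles formally from the computations already carried out in \prettyref{sec:contract}.
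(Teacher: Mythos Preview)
Your approach is essentially identical to the paper's: identify the left column as the Gersten complex for $K_1^M/2$, the right column as the complex of \prettyref{lem:nq}(iii), verify commutativity from the naturality of residues, check row exactness (including the same $\mathcal{A}/\mathcal{S}$ dichotomy in codimension one), and infer that the middle column is a complex by a diagram chase. The paper organizes these as steps (i)--(v) in the same order of ideas.

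Your closing paragraph on the $\{\pm 1\}$-ambiguity is a genuine observation that the paper glosses over. As you note, the kernel of $GW(K)^\times \to NQ(K)$ is $\{\pm 1\}\cdot\{\langle u\rangle\}$, not just the square classes, so the second row is strictly speaking only exact up to this constant $\{\pm 1\}$. Your resolution---that $-1$ is unramified and annihilated by every residue, hence invisible to the differentials and to cohomology in positive degree---is exactly the right way to handle it, and is more careful than the paper's treatment in step (iv).
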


\begin{proof}
(i) The diagram is commutative. This is almost by definition: the
second line is the one coming from the definition of $NQ$ resp. the
pushout square of \prettyref{prop:units}. The first line is the
restriction to unramified elements, so these squares commute. The
further lines are those computed in \prettyref{sec:contract}, so these
squares also do commute.

(ii) The left column is a complex. In fact, it is the Gersten
resolution for the strictly $\Ao$-invariant sheaf $\mathbf{K_1^M/2}$. 

(iii) The right column is a complex by \prettyref{lem:nq}. 

(iv) The rows of the diagram are exact. This follows from the
description of units, cf. \prettyref{prop:units}. In fact, $NQ$ is
defined in such a way that the second row is exact, whence also the
exactness of the first row. In the third row, we have the following
case distinction: if $\pi$ is a sum of squares, then $\mathcal{A}=0$
and $\mathcal{S}\cong\mathbb{Z}/2$, in which case we have an exact
sequence dividing out a copy of $\mathbb{Z}/2$ from
$W(\kappa(x_1))_{\tor}^{(1)}$. If $\pi$ is not a sum of squares, then
$\mathcal{A}\cong\mathbb{Z}/2$, and $\mathcal{S}=0$, in which case we
have an exact sequence dividing out the additional $\mathbb{Z}/2$ in
the middle. All rows below the third are obviously exact. 

(v) It follows by a diagram chase that the middle column is also a
complex. 
\end{proof}

\subsection{Strict \texorpdfstring{$\Ao$}{A1}-invariance}

Recall that a sheaf of groups $\mathcal{G}$ on $\smk$ is called
\emph{strictly $\Ao$-invariant} if for each $i$ and each smooth scheme
$X$ over $k$ the associated morphism
$H^i_{\Nis}(X,\mathcal{G})\rightarrow
H^i_{\Nis}(X\times\Ao,\mathcal{G})$ is an isomorphism. 

\begin{theorem}
Let $X$ be an essentially smooth local scheme of dimension $n$ over
$k$, let $K$ denote its function field, and let $z$ denote its closed
point. Then the following sequence is exact:
$$
0\rightarrow \mathbf{GW^\times}(X)\rightarrow
GW(K)^\times\rightarrow\bigoplus_{x_1\in
  X^{(1)}}\left(\mathcal{A}_x\oplus
  W(\kappa(x_1))_{\tor}^{(1)}\right)\rightarrow 
$$
$$
\rightarrow \bigoplus_{x_2\in
  X^{(2)}}W(\kappa(x_2))_{\tor}^{(2)}\rightarrow\cdots \rightarrow
\bigoplus_{x_{n-1}\in
  X^{(n-1)}}W(\kappa(x_{n-1}))_{\tor}^{(n-1)}\rightarrow 
W(\kappa(z))_{\tor}^{(n)}\rightarrow 0. 
$$
\end{theorem}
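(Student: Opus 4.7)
The plan is to deduce exactness from the short exact sequence of complexes assembled in \prettyref{prop:exact}, by showing that the outer two columns are acyclic on essentially smooth local schemes and then invoking the long exact sequence in cohomology. Since part (v) of the proof of \prettyref{prop:exact} already verifies that the middle column is a complex, the only remaining issue is exactness at every spot.

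First I would treat the top of the sequence separately. By the definition of an unramified sheaf combined with \prettyref{prop:contract}, the subgroup $\mathbf{GW^\times}(X) \subseteq GW(K)^\times$ coincides with $\bigcap_{x_1 \in X^{(1)}} \mathbf{GW^\times}(\mathcal{O}_{x_1})$, which is precisely the kernel of the first differential $GW(K)^\times \to \bigoplus_{x_1}\bigl(\mathcal{A}_{x_1}\oplus W(\kappa(x_1))_{\tor}^{(1)}\bigr)$. Hence exactness at $\mathbf{GW^\times}(X)$ and at $GW(K)^\times$ is automatic from the very definition.

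Next I would observe that the left column of the diagram in \prettyref{prop:exact} is the Gersten resolution of $\mathbf{K_1^M/2}$, which is exact on essentially smooth local schemes — this is a standard consequence of the Gersten conjecture for mod-$2$ Milnor K-theory, and also falls out of the Rost–Schmid complex machinery of \cite[Chapter 5]{morel:book}. The right column is exact on essentially smooth local schemes by \prettyref{lem:nq}(iii). Since the rows of the diagram in \prettyref{prop:exact} are term-wise exact, the three columns form a short exact sequence of chain complexes of abelian groups.

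Finally I would apply the long exact sequence in cohomology attached to this short exact sequence of complexes. Vanishing of the cohomology of the left and right columns forces vanishing of the cohomology of the middle column at every degree, which is the claimed exactness. Surjectivity at the last term $W(\kappa(z))_{\tor}^{(n)}$ is a free bonus: the bottom row of the diagram is an identity between the middle and right columns, so the surjectivity in the right column transfers directly. There is no substantive obstacle beyond this bookkeeping, since the real work — identifying the contractions, checking that the differentials are homomorphisms for the twisted group laws $\boxplus_n$, and constructing the Gersten complexes for $\mathbf{K_1^M/2}$ and $\mathbf{NQ}$ — has already been carried out in \prettyref{sec:contract} and in \prettyref{lem:nq}.
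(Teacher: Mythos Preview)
Your proposal is correct and follows essentially the same strategy as the paper: both arguments invoke the short exact sequence of complexes from \prettyref{prop:exact}, use that the left column is the (exact) Gersten complex for $\mathbf{K_1^M/2}$ and that the right column is exact by \prettyref{lem:nq}, and conclude exactness of the middle column. The paper merely compresses this into a single sentence, while you spell out the long exact sequence argument and treat the top and bottom of the sequence explicitly.
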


\begin{proof}
Using \prettyref{prop:exact}, we immediately reduce to the exactness
of the resolution of $\mathbf{NQ}$ which follows from \prettyref{lem:nq}. 
\end{proof}

\begin{theorem}
\label{thm:sa1}
The unramified sheaf $\mathbf{GW^\times}$ is strictly $\Ao$-invariant.
\end{theorem}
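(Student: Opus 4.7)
The plan is to leverage the short exact sequence of Gersten-type complexes assembled in \prettyref{prop:exact}, reducing strict $\Ao$-invariance of $\mathbf{GW^\times}$ to the analogous statement for the two flanking sheaves $\mathbf{K_1^M/2}$ and $\mathbf{NQ}$. In Morel's framework (\cite[Chapter 5]{morel:book}), once an unramified sheaf admits a Gersten/Rost--Schmid-type resolution whose cohomology sheaves are $\Ao$-invariant, strict $\Ao$-invariance follows. The preceding theorem already provides the resolution property for $\mathbf{GW^\times}$ (exactness on essentially smooth local schemes), so the work remaining is to verify $\Ao$-invariance of the cohomology of the Gersten-type complex in the middle column of \prettyref{prop:exact}.

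The left column of that diagram is the classical Gersten resolution for the strictly $\Ao$-invariant sheaf $\mathbf{K_1^M/2}$, whose cohomology is $\Ao$-invariant by standard results summarized in \cite[Section 5]{morel:book}. The right column is exactly the complex of \prettyref{lem:nq}(iii), whose cohomology is declared to be $\Ao$-invariant in that lemma. Combining these with the short exact sequence of complexes in \prettyref{prop:exact} produces a long exact sequence
\[
\cdots \to H^i(C^\bullet(X,\mathbb{G}_m/2)) \to H^i(C^\bullet(X,\mathbf{GW^\times})) \to H^i(C^\bullet(X,\mathbf{NQ})) \to \cdots
\]
which is natural with respect to the projection $X\times\Ao\to X$. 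Comparing this sequence for $X$ and for $X\times\Ao$ and applying the five lemma yields $\Ao$-invariance of the cohomology of $C^\bullet(-,\mathbf{GW^\times})$.

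The remaining point — and, I expect, the main subtlety — is identifying the cohomology of this Gersten-type complex with the Nisnevich sheaf cohomology $H^i_{\Nis}(X,\mathbf{GW^\times})$. This is where one must actually invoke the resolution property of the preceding theorem together with acyclicity of the terms. The terms are direct sums $\bigoplus_{x_i\in X^{(i)}} W(\kappa(x_i))_{\tor}^{(i)}$, which as sheaves are iterated pushforwards from residue fields at points of codimension $i$ and hence have vanishing higher Nisnevich cohomology — exactly as in the analogous verification for $\mathbf{I_{\tor}}$ in \prettyref{lem:itor}(ii). Thus the complex is a flasque Nisnevich resolution of $\mathbf{GW^\times}$, and its cohomology genuinely computes $H^i_{\Nis}(X,\mathbf{GW^\times})$.

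Putting the pieces together: the hypercohomology of the middle complex equals $H^*_{\Nis}(X,\mathbf{GW^\times})$, and by the five-lemma argument above this is $\Ao$-invariant, which is precisely the content of strict $\Ao$-invariance. The conceptually interesting aspect is that although $\mathbf{NQ}$ is not defined by the usual Witt-ring addition (its group law is the shifted $\boxplus_n$ of \prettyref{def:halfwitt}), the underlying set-theoretic Gersten complex coincides with a twist of the one for $\mathbf{I_{\tor}}$, so inheritance of exactness and of $\Ao$-invariance goes through on the level of underlying sets — which is what makes the whole strategy succeed despite the exotic abelian group structure.
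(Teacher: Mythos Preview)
Your proposal is correct and follows essentially the same approach as the paper: use the short exact sequence of complexes from \prettyref{prop:exact}, invoke the Gersten resolution for $\mathbf{K^M_1/2}$ on the left, and reduce to the $\Ao$-invariance of the $\mathbf{NQ}$-complex from \prettyref{lem:nq}. The paper's proof is a one-line pointer to these same ingredients; you have simply spelled out the five-lemma step and the flasque-resolution identification that are implicit there.
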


\begin{proof}
Again, we use \prettyref{prop:exact} and the Gersten resolution for
$K^M_1/2$ to reduce to the $\Ao$-invariance of $\mathbf{NQ}$ which
follows \prettyref{lem:nq}.
\end{proof}

From the description of the units in the
Grothendieck-Witt ring, we obtain a long exact sequence decomposing
the $\mathbf{GW^\times}$-homology into a weight one $K$-theory part and
a modified Witt-ring homology part. This is a direct consequence of
\prettyref{prop:exact}. 

\begin{proposition}
\label{prop:nq}
There is a short exact sequence of strictly $\Ao$-invariant sheaves of
groups on $\smk$
$$
1\rightarrow\mathbb{G}_m/2\rightarrow \mathbf{GW^\times}\rightarrow
\mathbf{NQ}\rightarrow 1.
$$
This induces (functorially on $\smk$) an  exact sequence 
$$
0\rightarrow
\mathbb{G}_m/2(X)\rightarrow 
\mathbf{GW^\times}(X)\rightarrow \mathbf{NQ}(X)\rightarrow 
\operatorname{Pic}/2(X)\rightarrow 
$$
$$
\rightarrow H^1_{\Nis}(X,\mathbf{GW}^\times)
\rightarrow H^1_{\Nis}(X,\mathbf{NQ})\rightarrow 0
$$
as well as (for $i\geq 2$) isomorphisms
$$
H^i_{\Nis}(X,\mathbf{GW}^\times)\stackrel{\cong}{\longrightarrow}
H^i_{\Nis}(X,\mathbf{NQ}).
$$
\end{proposition}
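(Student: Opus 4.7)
The plan is to first establish the short exact sequence of sheaves, and then derive both the six-term exact sequence and the higher-degree isomorphisms as its cohomological consequences via the long exact sequence in Nisnevich cohomology.

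First I would produce the short exact sequence $1 \to \mathbb{G}_m/2 \to \mathbf{GW}^\times \to \mathbf{NQ} \to 1$ of unramified sheaves of abelian groups on $\smk$. It suffices to exhibit it on values on fields $L \in \mathcal{F}_k$ together with compatibility with the data (D1)--(D3); the corresponding sequence of Nisnevich sheaves then follows from \cite[Theorem 2.11]{morel:book}. On field values, the map $\mathbb{G}_m/2(L) \to GW(L)^\times$ sends a square class $u$ to $\langle u \rangle$ and is injective because $\langle u \rangle = 1$ in $GW(L)$ forces $u \in (L^\times)^2$. The identification $GW(L)^\times / \mathbb{G}_m/2(L) \cong NQ(L)$ comes from combining the pushout square of \prettyref{prop:units} (which gives $W(L)^\times/\mathbb{G}_m/2(L) \cong NQ(L)$) with \prettyref{cor:gww} (which shows that the kernel $\{\pm 1\} = \ker(GW^\times \twoheadrightarrow W^\times)$ is absorbed into the image of $\mathbb{G}_m/2$, since $-1 = \langle -1 \rangle$ is itself a square class). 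Compatibility with the residue and specialization data, and exactness on unramified subgroups, is visible in the top row of the diagram of \prettyref{prop:exact}.

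Second, the three terms are all strictly $\Ao$-invariant: $\mathbf{GW}^\times$ by \prettyref{thm:sa1}, $\mathbf{NQ}$ by \prettyref{lem:nq}(iii), and $\mathbb{G}_m/2 = \mathbf{K}_1^M/2$ by \cite{morel:book}. Applying Nisnevich cohomology to the short exact sequence yields a functorial long exact sequence
\[
\cdots \to H^i_{\Nis}(X, \mathbb{G}_m/2) \to H^i_{\Nis}(X, \mathbf{GW}^\times) \to H^i_{\Nis}(X, \mathbf{NQ}) \to H^{i+1}_{\Nis}(X, \mathbb{G}_m/2) \to \cdots
\]
for $X \in \smk$, and it remains to identify $H^i_{\Nis}(X, \mathbb{G}_m/2)$ for $i \geq 1$.

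Third, for this I would invoke the Kummer-style four-term exact sequence of Nisnevich sheaves $1 \to \mu_2 \to \mathbb{G}_m \xrightarrow{\cdot 2} \mathbb{G}_m \to \mathbb{G}_m/2 \to 1$ (valid since $\operatorname{char} k \neq 2$), split into two short exact sequences through the image subsheaf. Using the standard identifications $H^1_{\Nis}(X, \mathbb{G}_m) = \operatorname{Pic}(X)$ and $H^i_{\Nis}(X, \mathbb{G}_m) = 0$ for $i \geq 2$ on smooth $X$ (from Hilbert's Theorem 90 together with the Zariski divisor resolution of $\mathbb{G}_m$), one reads off $H^1_{\Nis}(X, \mathbb{G}_m/2) \cong \operatorname{Pic}(X)/2$ and $H^i_{\Nis}(X, \mathbb{G}_m/2) = 0$ for $i \geq 2$. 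Substituting into the long exact sequence above gives exactly the claimed six-term exact sequence and the isomorphisms for $i \geq 2$. The main anticipated hurdle is the first step: rigorously verifying that surjectivity of $\mathbf{GW}^\times \to \mathbf{NQ}$ holds Nisnevich-locally, i.e.\ on essentially smooth henselian local schemes. This is essentially the content of the top row of \prettyref{prop:exact} and relies on the residue computation of \prettyref{prop:contract}, which allows one to modify a lift of an unramified element of $\mathbf{NQ}$ by a suitable square class (in the $\mathcal{A}$-factor) to obtain an unramified element of $\mathbf{GW}^\times$; once this is in hand, everything else is a formal consequence of the long exact sequence in cohomology.
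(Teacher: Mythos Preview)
Your proposal is correct and follows the paper's approach: both derive the proposition from the short exact sequence of sheaves built into \prettyref{prop:exact} and its associated long exact sequence in Nisnevich cohomology. The one place you diverge is your third step, where you compute $H^i_{\Nis}(X,\mathbb{G}_m/2)$ via a Kummer-style four-term sequence; the paper instead just reads this off the left column of the diagram in \prettyref{prop:exact}, which is the two-term Gersten complex for $\mathbf{K}^M_1/2$ and hence immediately gives $H^1_{\Nis}(X,\mathbb{G}_m/2)\cong\operatorname{Pic}(X)/2$ and vanishing for $i\geq 2$. Your route works too, but as written it tacitly needs the extra input $H^{i}_{\Nis}(X,\mu_2)=0$ for $i\geq 1$ on irreducible $X$ (which holds since $\mu_2$ is the constant sheaf $\mathbb{Z}/2$ and constant sheaves are flasque on irreducible schemes); without that you cannot pass from the vanishing of $H^{\geq 2}_{\Nis}(X,\mathbb{G}_m)$ alone to the vanishing of $H^{\geq 2}_{\Nis}(X,\mathbb{G}_m/2)$, so you should make that step explicit.
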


Note that from the proof above, we have a bijection of sets
$H^i(X,\mathbf{NQ})\cong H^i(X,\mathbf{W_{\tor}})$ for $i\geq
2$. However, as the abelian group structures of $W_{\tor}$ and
$W_{\tor}^{(n)}$ are different, the above bijection of sets does not
respect the group structure. Nevertheless, non-triviality of elements
in $H^i(X,\mathbf{NQ})$ can be detected in $H^i(X,\mathbf{W_{\tor}})$.

\section{\texorpdfstring{$\Ao$}{A1}-spherical fibrations and
  orientation theory}  
\label{sec:sphere}

In this final section, we discuss consequences of the previous results
for orientation theory and spherical fibrations in $\Ao$-homotopy
theory. 

\subsection{Classifying space of spherical fibrations}

\begin{proposition}
The space $\classify{S^{2n,n}}$ is $\Ao$-local. Hence it is in fact
the classifying space of (Nisnevich locally trivial) spherical
fibrations. 
\end{proposition}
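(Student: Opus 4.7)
The plan is to invoke Morel's characterisation of $\Ao$-local connected simplicial sheaves from \cite{morel:book}: a pointed connected simplicial sheaf is $\Ao$-local if and only if its $\pi_1$ is a strongly $\Ao$-invariant sheaf of groups and its $\pi_i$ for $i\geq 2$ are strictly $\Ao$-invariant sheaves of abelian groups. Since $\classify{S^{2n,n}}$ is connected by construction with $\pi_i(\classify{S^{2n,n}})\cong\pi_{i-1}(\haut S^{2n,n})$, the problem reduces to verifying the appropriate invariance of the homotopy sheaves of the simplicial sheaf $\haut S^{2n,n}$ of pointed homotopy self-equivalences.

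First I would identify $\pi_0(\haut S^{2n,n})$ with $\mathbf{GW^\times}$. By Morel's generalisation of the Brouwer degree, \cite[Corollary 1.24]{morel:book}, the sheaf of $\Ao$-homotopy classes of pointed endomorphisms of $S^{2n,n}$ is the Grothendieck--Witt sheaf $\mathbf{GW}$, and the components corresponding to self-equivalences are exactly the units. Hence $\pi_0(\haut S^{2n,n})\cong\mathbf{GW^\times}$, which by \prettyref{thm:sa1} is strictly, and therefore in particular strongly, $\Ao$-invariant.

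For the higher homotopy sheaves, I would use that $S^{2n,n}$ is by assumption an $\Ao$-local fibrant model, so the pointed mapping space $\map_{\ast}(S^{2n,n},S^{2n,n})$ is $\Ao$-local as the internal hom into an $\Ao$-local object. The sub-simplicial sheaf $\haut S^{2n,n}$ is then a union of connected components of an $\Ao$-local simplicial sheaf, so it is itself $\Ao$-local, and Morel's criterion applied to $\haut S^{2n,n}$ gives strict $\Ao$-invariance of $\pi_i(\haut S^{2n,n})$ for $i\geq 1$. Combining both ingredients, Morel's criterion applied to $\classify{S^{2n,n}}$ yields its $\Ao$-locality. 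For the second assertion, the general theory of \cite{classify} together with \cite[Remark 8.3]{flocal} identifies the Nisnevich-locally trivial $\haut(S^{2n,n})$-torsors classified by $\classify{S^{2n,n}}$ with Nisnevich-locally trivial spherical fibrations, and once $\Ao$-locality is in place this classification is automatically compatible with $\Ao$-equivalence of fibres. The main technical obstacle I anticipate is the sheafification of the Brouwer-degree identification: Morel's result is phrased pointwise on fields, so one must check that the induced isomorphism of Nisnevich sheaves is compatible with the composition in $\haut S^{2n,n}$, which is precisely what allows the strong $\Ao$-invariance of $\mathbf{GW^\times}$ established in \prettyref{thm:sa1} to translate into the required invariance of $\pi_1(\classify{S^{2n,n}})$.
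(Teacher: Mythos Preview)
Your argument is correct and lands on the same reduction as the paper: everything hinges on \prettyref{thm:sa1}, the strict $\Ao$-invariance of $\mathbf{GW^\times}$. The paper, however, does not argue via Morel's characterisation directly; it simply invokes \cite[Theorem~8.1 and Corollary~8.2]{flocal}, which package precisely the implication ``$\pi_0(\haut F)$ strongly $\Ao$-invariant and $F$ an $\Ao$-local fibrant object $\Rightarrow$ $\classify{F}$ is $\Ao$-local'' as a black box. What you have written is, in effect, an outline of how those results from \cite{flocal} are proved in this special case: the internal hom into an $\Ao$-local target is $\Ao$-local, so the higher homotopy sheaves of $\haut S^{2n,n}$ are automatically strictly $\Ao$-invariant, and only $\pi_0\cong\mathbf{GW^\times}$ needs the input of \prettyref{thm:sa1}. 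Your route is thus more self-contained and makes the role of Morel's degree computation explicit, while the paper's citation keeps the proof to one line at the cost of relying on the external reference. The technical concern you flag about sheafifying the Brouwer-degree isomorphism is real but is already handled in \cite{morel:book}; once that is accepted, both proofs are complete.
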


\begin{proof}
This is a direct consequence of \prettyref{thm:sa1} and
\cite[Theorem 8.1]{flocal} resp. \cite[Corollary 8.2]{flocal}.
\end{proof}

As a consequence, we can consider an unstable version of the
$J$-homomorphism on the classifying space level. 

\begin{proposition}
The homomorphism $GL_n\rightarrow \haut{S^{2n,n}}$ induces a morphism
of $\Ao$-local simplicial sheaves
$$
J^{\Ao}(n):L_{\Ao}BGL_n\rightarrow \classify{S^{2n,n}}.
$$
\end{proposition}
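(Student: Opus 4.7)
The plan is to proceed in two steps: first promote the homomorphism $GL_n\to \haut{S^{2n,n}}$ to a map of classifying spaces, and then invoke the universal property of $\Ao$-localization using the preceding proposition. The main input is already in hand, namely that $\classify{S^{2n,n}}$ is $\Ao$-local.

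For the first step, I would start from the linear action of $GL_n$ on $\mathbb{A}^n$. This action fixes the origin, hence descends to a pointed action on $\mathbb{A}^n/(\mathbb{A}^n\setminus 0)$, which models $S^{2n,n}$ after a (functorial) $\Ao$-fibrant replacement. These are pointed automorphisms, in particular pointed self-equivalences, so one obtains a homomorphism of simplicial sheaves of groups (or grouplike monoids) $GL_n\to \haut{S^{2n,n}}$. Applying the classifying space functor of \cite{classify} functorially then yields a morphism of simplicial sheaves
$$
BGL_n\longrightarrow B\haut{S^{2n,n}}=\classify{S^{2n,n}}.
$$

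For the second step, the preceding proposition asserts that $\classify{S^{2n,n}}$ is $\Ao$-local. The canonical map $BGL_n\to L_{\Ao}BGL_n$ is an $\Ao$-weak equivalence, and maps into $\Ao$-local targets invert $\Ao$-weak equivalences up to simplicial homotopy. Hence the map above factors essentially uniquely through $L_{\Ao}BGL_n$, producing the desired morphism $J^{\Ao}(n):L_{\Ao}BGL_n\to \classify{S^{2n,n}}$ of $\Ao$-local simplicial sheaves.

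There is no real obstacle here: all substantive content was absorbed into \prettyref{thm:sa1} and the $\Ao$-locality of $\classify{S^{2n,n}}$. The only minor points of care are ensuring that the linear $GL_n$-action can be transported to a chosen $\Ao$-fibrant model of $S^{2n,n}$ so that the homomorphism to $\haut{S^{2n,n}}$ is well-defined as sheaves on $\smk$, and verifying that the functorial $B$-construction of \cite{classify} preserves the relevant simplicial structure; both are standard once a functorial fibrant replacement is chosen.
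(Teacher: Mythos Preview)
Your proposal is correct and follows essentially the same approach as the paper: describe the homomorphism $GL_n\to\haut{S^{2n,n}}$ via the linear action on $\mathbb{A}^n/(\mathbb{A}^n\setminus\{0\})$ transported along a functorial fibrant replacement, apply the classifying space functor, and then pass to the $\Ao$-localization. The paper phrases the last step simply as ``a functorial fibrant replacement again'' rather than invoking the $\Ao$-locality of the target explicitly, but this amounts to the same thing.
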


\begin{proof}
We describe the homomorphism: a matrix $M\in GL_n$ induces a scheme
morphism $M:\mathbb{A}^n\rightarrow\mathbb{A}^n$. Since this morphism
preserves $\mathbb{A}^n\setminus\{0\}$, it descends to a morphism 
$$
M:\mathbb{A}^n/(\mathbb{A}^n\setminus\{0\})\rightarrow
\mathbb{A}^n/(\mathbb{A}^n\setminus\{0\}).  
$$
This construction evidently maps matrix multiplication to
composition. We choose a functorial fibrant replacement
$\mathbb{A}^n/(\mathbb{A}^n\setminus\{0\})\rightarrow S^{2n,n}$  (note
the abuse of notation), and have thus described the homomorphism
$GL_n\rightarrow\haut{S^{2n,n}}$. A homomorphism between simplicial
monoids induces a morphism of the corresponding simplicial classifying
spaces, and we get the required morphism between the $\Ao$-local
simplicial sheaves by a functorial fibrant replacement again.
\end{proof}

The morphisms $BGL_n\rightarrow \classify{S^{2n,n}}$ stabilize
to a morphism $BGL_\infty\rightarrow \classify{S^{2\infty,\infty}}$, where on
the source we stabilize by adding a trivial line bundle, and on the
target we stabilize by fibrewise suspension with $S^{2,1}$. 
We denote by $G/O(n)$ the space 
$$
G/O(n)=\hofib\left(J^{\Ao}(n):L_{\Ao}BGL_n\rightarrow\classify{S^{2n,n}}\right), 
n\in \mathbb{N}\cup\{\infty\}.
$$
This space controls the difference between the classification of
rank $n$ vector bundles and the rank $n$ classification of spherical
fibrations. The notation is chosen to fit the usual topological
notation in which $G/O$ is the homotopy fibre of the $J$-homomorphism
$BO\rightarrow  BG$.

It would be very interesting to study the morphism
$[X,J^{\Ao}(n)]:[X,BGL_n]_{\Ao}\rightarrow[X,\classify{S^{2n,n}}]_{\Ao}$,
i.e. really compare vector bundles and spherical fibrations. However,
lack of descriptions of higher homotopy groups of
$\classify{S^{2n,n}}$ restricts us to the study of the first Postnikov
section of $J^{\Ao}(n)$, i.e. the homomorphism
$$
[X,J^{\Ao}(n)^{(1)}]:[X,B\mathbb{G}_m]_{\Ao}\rightarrow
[X,B\pi_1^{\Ao}\classify{S^{2n,n}}]_{\Ao}\cong 
[X,B\mathbf{GW^\times}]_{\Ao}.
$$
This is what we will do in the rest of the section. 

\subsection{Cohomology with unit coefficients}

In this section, we discuss the cohomology of
$\mathbf{GW^\times}$. 

\begin{proposition}
We have 
$$
\pi_1^{\Ao}J^{\Ao}(n):\mathbb{G}_m\rightarrow
\mathbf{GW^\times}: u\in F^\times\mapsto \langle u\rangle \in
GW(F)^\times.
$$
The composition $\operatorname{Pic}(X)\rightarrow
\operatorname{Pic}/2(X)\rightarrow H^1_{\Nis}(X,\mathbf{GW}^\times)$
of the obvious projection with the morphism 
from \prettyref{prop:nq} is precisely the induced morphism 
$$
H^1_{\Nis}(X,J^{\Ao}(n)^{(1)}):H^1_{\Nis}(X,\mathbb{G}_m)\rightarrow
H^1_{\Nis}(X,\mathbf{GW}^\times). 
$$
\end{proposition}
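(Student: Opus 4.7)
The plan is to prove the formula $\pi_1^{\Ao}J^{\Ao}(n)(u)=\langle u\rangle$ directly from the construction of $J^{\Ao}(n)$ together with Morel's Brouwer degree, and then to deduce the cohomological statement as a formal consequence via Postnikov truncation and \prettyref{prop:nq}.

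First I would record the identifications of the relevant $\pi_1^{\Ao}$-sheaves. On the source side, the determinant (combined with $\Ao$-connectedness of $SL_n$) yields $\pi_0^{\Ao}(GL_n)\cong\Gm$, hence $\pi_1^{\Ao}(L_{\Ao}BGL_n)\cong\Gm$. On the target side, $\pi_1^{\Ao}(\classify{S^{2n,n}})\cong\pi_0^{\Ao}(\haut(S^{2n,n}))\cong\mathbf{GW^\times}$ by Morel's Brouwer degree theorem \cite[Corollary~1.24]{morel:book}, which is the starting point of the paper. Under these identifications, $\pi_1^{\Ao}J^{\Ao}(n)$ is given by applying $\pi_0^{\Ao}$ to the composite $\Gm\hookrightarrow GL_n\to\haut(S^{2n,n})$ underlying the construction of $J^{\Ao}(n)$. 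A unit $u\in F^\times$ corresponds to the matrix $\operatorname{diag}(u,1,\dots,1)\in GL_n(F)$, whose induced self-map of $\mathbb{A}^n/(\mathbb{A}^n\setminus\{0\})$ is multiplication by $u$ in the first coordinate and the identity in the others. Morel's degree theorem then identifies the class of this self-map in $GW(F)^\times$ with $\langle u\rangle\cdot\langle 1\rangle^{\otimes(n-1)}=\langle u\rangle$, establishing the first formula.

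For the second assertion, the first Postnikov section $J^{\Ao}(n)^{(1)}$ is the map of Eilenberg--MacLane objects $B\Gm\to B\mathbf{GW^\times}$ corresponding, by the first part, to the homomorphism $u\mapsto\langle u\rangle$ of unramified sheaves of abelian groups. Since $\langle u^2\rangle=1$ in $\mathbf{GW^\times}$, this homomorphism factors as
\[
\Gm\twoheadrightarrow\Gm/2\hookrightarrow\mathbf{GW^\times},
\]
where the second arrow is exactly the monomorphism from the short exact sequence of \prettyref{prop:nq}. Applying $H^1_{\Nis}(X,-)$ to this factorization yields precisely
\[
\operatorname{Pic}(X)=H^1_{\Nis}(X,\Gm)\longrightarrow H^1_{\Nis}(X,\Gm/2)=\operatorname{Pic}/2(X)\longrightarrow H^1_{\Nis}(X,\mathbf{GW^\times}),
\]
in which the first map is the obvious projection and the second is the connecting morphism of \prettyref{prop:nq}, as required.

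The genuinely nontrivial input is the Brouwer-degree computation identifying the $\Ao$-homotopy class of multiplication by $u$ in one coordinate of $\mathbb{A}^n/(\mathbb{A}^n\setminus\{0\})$ with the one-dimensional form $\langle u\rangle$; this is a direct appeal to \cite[Corollary~1.24]{morel:book}. A minor bookkeeping point to verify is that the chosen inclusion $\Gm\hookrightarrow GL_n$ induces the same map on $\pi_0^{\Ao}$ as the determinant splitting, but this is immediate since the composite $\Gm\hookrightarrow GL_n\xrightarrow{\det}\Gm$ is the identity. Everything else is formal: the factorization through $\Gm/2$ uses only $\langle u^2\rangle=1$, and the cohomological identification is exactly the naturality built into \prettyref{prop:nq}.
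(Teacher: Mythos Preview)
Your proposal is correct and follows essentially the same line as the paper's own proof: both identify $\pi_1^{\Ao}J^{\Ao}(n)$ by evaluating the self-map of the sphere given by $\operatorname{diag}(u,1,\dots,1)$ and invoking Morel's Brouwer degree to obtain $\langle u\rangle$. Your write-up is in fact more complete than the paper's, which only spells out the first assertion and leaves the cohomological statement implicit; your factorization $\Gm\twoheadrightarrow\Gm/2\hookrightarrow\mathbf{GW^\times}$ and the application of $H^1_{\Nis}(X,-)$ make explicit exactly what the paper intends. One small terminological quibble: the map $\operatorname{Pic}/2(X)\to H^1_{\Nis}(X,\mathbf{GW^\times})$ in \prettyref{prop:nq} is not a connecting (boundary) morphism but simply the map on $H^1$ induced by the inclusion $\Gm/2\hookrightarrow\mathbf{GW^\times}$; you use it correctly, so just adjust the wording.
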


\begin{proof}
From the description of $J$, to determine the morphism induced on
fundamental groups, we need to consider for $u\in k^\times$ the
homotopy class of multiplication with the matrix
$\operatorname{diag}(u,1,\dots,1)$ in
$[\mathbb{A}^n\setminus\{0\},\mathbb{A}^n\setminus\{0\}]_{\Ao}$. Under
the isomorphism
$[\mathbb{A}^n\setminus\{0\},\mathbb{A}^n\setminus\{0\}]_{\Ao}\cong
GW(k)$, this is mapped to $\langle u\rangle$, cf. \cite{morel:book}. 
\end{proof}

\begin{remark}
\label{rem:easy}
Note that the above implies that the image of $\mathbb{G}_m\rightarrow
\mathbf{GW^\times}$ is then determined by the image of
$\mathbb{G}_m\rightarrow\mathbf{W^\times}$ since it factors through
the $W^\times$-coset of $1$ in the extension 
$$
1\rightarrow\{\pm 1\}\rightarrow
\mathbf{GW^\times}\rightarrow\mathbf{W^\times}\rightarrow 1.
$$
\end{remark}

\begin{proposition}
\label{prop:spheres}
We have the following computation of Nisnevich cohomology of
$\mathbf{GW^\times}$ over spheres:
$$
H^i_{\Nis}(S^p\wedge\mathbb{G}_m^{\wedge q},\mathbf{GW^\times})\cong 
H^{i-p}_{\Nis}(\Spec k,(\mathbf{GW^\times})_{-q})\cong 
\left\{\begin{array}{ll}
0&i\neq p\\
GW(k)^\times & q=0,\\
\mathcal{A}\oplus W(k)_{\tor}^{(1)} & q= 1,\\
W(k)_{\tor}^{(q)} & q\geq 2.
\end{array}\right.
$$
In the above, $\mathcal{A}=0$ if $T\in k(T)$ is a sum of squares and
$\mathcal{A}\cong\mathbb{Z}/2$ otherwise. 
\end{proposition}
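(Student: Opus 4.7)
The plan is to reduce \prettyref{prop:spheres} to the contraction computations of \prettyref{sec:contract} together with the standard machinery of strictly $\Ao$-invariant sheaves.

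By \prettyref{thm:sa1}, $\mathbf{GW^\times}$ is strictly $\Ao$-invariant. For any strictly $\Ao$-invariant sheaf $\mathcal{F}$ of abelian groups on $\smk$, the formalism of \cite{morel:book} provides a suspension isomorphism
$$
H^i_{\Nis}(S^p \wedge \Gm^{\wedge q} \wedge X_+, \mathcal{F}) \cong H^{i-p}_{\Nis}(X, \mathcal{F}_{-q}).
$$
This combines the simplicial suspension isomorphism, which accounts for the $S^p$-factor by shifting cohomological degree by $p$, with the adjunction between smashing with $\Gm$ and the contraction functor $\mathcal{F} \mapsto \mathcal{F}_{-1}$, iterated $q$ times to absorb the $\Gm^{\wedge q}$-factor. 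Specializing to $X = \Spec k$ and using that $\Spec k$ has Nisnevich cohomological dimension zero yields
$$
H^i_{\Nis}(S^p \wedge \Gm^{\wedge q}, \mathbf{GW^\times}) \;\cong\; \begin{cases} \mathbf{GW^\times}_{-q}(k) & i = p, \\ 0 & i \neq p, \end{cases}
$$
which is the first isomorphism in the statement and already accounts for the vanishing in degrees $i \neq p$.

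It remains to evaluate the sections $\mathbf{GW^\times}_{-q}(k)$ of the iterated contraction. For $q = 0$ one has $\mathbf{GW^\times}(\Spec k) = GW(k)^\times$ by definition. For $q = 1$, the remark following \prettyref{prop:contract}, combined with \prettyref{cor:gww} (since $-1$ is unramified and therefore contributes nothing to the contraction), identifies $\mathbf{GW^\times}_{-1}(k)$ with $\mathcal{A} \oplus W(k)_{\tor}^{(1)}$, where $\mathcal{A}$ is as in \prettyref{prop:contract}. For $q \geq 2$, the remark at the end of \prettyref{sec:contract} identifies $\mathbf{GW^\times}_{-q}(k)$ with $W(k)_{\tor}^{(q)}$. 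Substituting these values into the displayed formula gives the four cases of the proposition.

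The only point requiring genuine care is the suspension/contraction isomorphism above; its key ingredient is Morel's identification of $\mathcal{F}_{-1}$ with the $\Gm$-loops sheaf of $\mathcal{F}$ for strictly $\Ao$-invariant $\mathcal{F}$, which is exactly the content of \cite[Chapters 2--6]{morel:book} applied in our setting. Once this is cited, the argument reduces to direct bookkeeping with the contraction formulas already available.
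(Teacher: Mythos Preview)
Your proposal is correct and follows essentially the same route as the paper: the first isomorphism is the standard suspension/``Thom'' isomorphism for strictly $\Ao$-invariant sheaves, and the second is a direct readout of the contraction computations from \prettyref{sec:contract}. Your version is considerably more detailed than the paper's two-sentence proof, but the strategy and the cited inputs are the same.
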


\begin{proof}
The first isomorphism is standard, it could be called ``Thom
isomorphism''. The second restates the computations from
\prettyref{sec:contract}. 
\end{proof}

\begin{remark}
The tangent bundle of $\mathbb{P}^1$ is $\mathcal{O}(-2)$, hence its
class in the cohomology group
$H^1_{\Nis}(\mathbb{P}^1,\mathbf{GW^\times})$ is trivial. The
$\mathbb{Z}/2$-summand in the cohomology group is generated by
$\mathcal{O}(-1)$, the M\"obius strip.  

There can be many additional $\Ao$-homotopy classes of morphisms
$\mathbb{P}^1\rightarrow B\mathbf{GW^\times}$. For example, if
$k=\mathbb{Q}$, the Witt ring $W(k)$ contains a lot of torsion
elements. However, none of these homotopy classes is visible in either
real or complex realization: for the complex realization 
$W(\mathbb{C})=\mathbb{Z}/2$, for the real realization
$W(\mathbb{R})=\mathbb{Z}$ is torsion-free.
\end{remark}

\subsection{Orientation theory}

\begin{definition}
A spherical fibration $f\in [X,\classify{S^{2n,n}}]_{\Ao}$ is called
\emph{orientable} if it factors through  the universal $\Ao$-covering
$$
\widetilde{\classify{S^{2n,n}}}=B(\haut{S^{2n,n}})_0\rightarrow
\classify{S^{2n,n}}.
$$
Given a spherical fibration $f:X\rightarrow \classify{S^{2n,n}}$ with
$X$ an $\Ao$-connected space, we define its \emph{orientation
  character} to be the morphism  
$$
\pi_1^{\Ao}(f):\pi_1^{\Ao}(X)\rightarrow
\pi_1^{\Ao}\classify{S^{2n,n}}\cong \mathbf{GW^\times}.
$$
\end{definition}

\begin{remark}
\begin{itemize}
\item
Obviously, a spherical fibration over an $\Ao$-simply-connected base
$X$ is orientable.  
\item There is a similar notion of orientability for vector bundles
  requiring that the classifying map $X\rightarrow BGL_n$ factors
  through the inclusion $BSL_n\rightarrow BGL_n$. This is equivalent
  to the vector bundle having trivial first Chern class.
\item More generally, we could define an orientation character to be
  the induced morphism on fundamental groupoids in case the space $X$
  is not $\Ao$-connected. As we only deal with $\mathbb{P}^1$ in what 
  follows, we keep it simple.
\end{itemize}
\end{remark}

The above notion of orientability is the one employed in
topology for spherical fibrations. In topology, orientability of
vector bundles and their associated spherical fibrations are
equivalent since $\pi_1(O(n))\cong\pi_1(G)\cong\mathbb{Z}/2$. However,
this is not the case in $\Ao$-homotopy theory. The following
comparison of the two notions of orientability is a direct consequence
of \prettyref{prop:nq}:

\begin{proposition}
Let $f\in [X, BGL_n]_{\Ao}$ be a (continuous) vector bundle on
$X$. The associated spherical fibration is orientable if one of the
following conditions holds:
\begin{enumerate}[(i)]
\item  $c_1(E)\equiv 0\mod 2$.
\item The class $[f]\in \operatorname{Pic}/2$ is in the image of $\mathbf{NQ}$ of
  \prettyref{prop:nq}. 
\end{enumerate}
\end{proposition}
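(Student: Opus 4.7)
The plan is to reduce the orientability question, which is a statement about lifting a map along a fibration, to a cohomological obstruction in $H^1_{\Nis}(X,\mathbf{GW^\times})$, and then read off both conditions from the long exact sequence of \prettyref{prop:nq} together with the Chern-class description of $H^1_{\Nis}(X,J^{\Ao}(n)^{(1)})$ from the preceding proposition.

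First, I would identify the obstruction to orientability. By the very construction, $\widetilde{\classify{S^{2n,n}}}\to\classify{S^{2n,n}}$ is a principal $\mathbf{GW^\times}$-fibration in the $\Ao$-local homotopy category, classified by a map $\classify{S^{2n,n}}\to B\mathbf{GW^\times}$ arising from the first Postnikov section $\classify{S^{2n,n}}\to B\pi_1^{\Ao}\classify{S^{2n,n}}$. Here \prettyref{thm:sa1}, the strict $\Ao$-invariance of $\mathbf{GW^\times}$, is essential, for it ensures that $B\mathbf{GW^\times}$ is already $\Ao$-local and that $\mathbf{GW^\times}$-torsors over $X$ are classified by $H^1_{\Nis}(X,\mathbf{GW^\times})$. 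Consequently, a classifying map $\bar f:X\to\classify{S^{2n,n}}$ lifts to $\widetilde{\classify{S^{2n,n}}}$ if and only if its composite with $\classify{S^{2n,n}}\to B\mathbf{GW^\times}$ is null-homotopic, which in turn is equivalent to the vanishing of an associated class in $H^1_{\Nis}(X,\mathbf{GW^\times})$.

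Second, I would evaluate this obstruction on the spherical fibration coming from a vector bundle $E$ classified by $f:X\to BGL_n$. The relevant obstruction class is the image of $[f]$ under
\[
[X,BGL_n]_{\Ao}\xrightarrow{J^{\Ao}(n)_\ast}[X,\classify{S^{2n,n}}]_{\Ao}\to H^1_{\Nis}(X,\mathbf{GW^\times}),
\]
and by the previous proposition this composite factors through the first Chern class as
\[
\operatorname{Pic}(X)\to \operatorname{Pic}/2(X)\to H^1_{\Nis}(X,\mathbf{GW^\times}),
\]
where the second arrow is the connecting piece of the long exact sequence of \prettyref{prop:nq}. Thus the obstruction equals the image of $c_1(E)\bmod 2$ under this map.

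With this cohomological formulation, both conditions follow immediately. Under (i), $c_1(E)\equiv 0\bmod 2$ already vanishes in $\operatorname{Pic}/2(X)$, so the obstruction vanishes. Under (ii), the class $[f]\in\operatorname{Pic}/2(X)$ lies in the image of $\mathbf{NQ}(X)\to\operatorname{Pic}/2(X)$; by exactness of the long exact sequence in \prettyref{prop:nq}, this is precisely the kernel of $\operatorname{Pic}/2(X)\to H^1_{\Nis}(X,\mathbf{GW^\times})$, so again the obstruction vanishes and the spherical fibration is orientable. The main obstacle is the first step, namely the justification that orientability, defined here as a lifting property along $\widetilde{\classify{S^{2n,n}}}\to\classify{S^{2n,n}}$, is correctly captured by vanishing in $H^1_{\Nis}(X,\mathbf{GW^\times})$; this relies on having $\mathbf{GW^\times}$ strictly $\Ao$-invariant so that the standard obstruction-theoretic argument with $B\pi_1$ transports to the $\Ao$-local setting. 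Once that identification is established, the two implications are formal.
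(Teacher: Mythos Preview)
Your proof is correct and follows exactly the approach the paper intends: the paper states the proposition as ``a direct consequence of \prettyref{prop:nq}'' without further argument, and you have simply spelled out that deduction in detail, using the preceding proposition to identify the obstruction class in $H^1_{\Nis}(X,\mathbf{GW^\times})$ with the image of $c_1(E)\bmod 2$ and then reading off the two conditions from the long exact sequence.
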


We use the proposition to discuss some examples of the arithmetic
nature of the above notion of orientability. 

\begin{example}
The orientation character of the tangent bundle of
  $\mathbb{P}^n$ is given by $\mathbb{G}_m\rightarrow
  \mathbf{GW}^\times:T\mapsto \langle T^{n+1}\rangle$. We denote by
  $ST(n)$ the spherical fibration associated to the tangent
  bundle of $\mathbb{P}^n$. We use that we can determine the morphism
  $\mathbb{G}_m\rightarrow\mathbf{GW^\times}$ from its image in
  $\mathbf{W^\times}$, cf. \prettyref{rem:easy}.
\begin{enumerate}[(i)]
\item Over an algebraically closed base $k=\overline{k}$,
  $ST(n)$ is orientable for any $n$: in that case $W(k)=\mathbb{Z}/2$,
  so $W(k)^\times=\{1\}$. 
\item Over the real numbers $k=\mathbb{R}$, the morphism
  $\pi_1^{\Ao}BGL_n\rightarrow \mathbf{W^\times}$ looks as follows:
  $$
  k^\times\rightarrow W(\mathbb{R})^\times:u\mapsto 
  \left\{\begin{array}{ll} 
      1 & u\in(\mathbb{R}^\times)^2\\
      -1 & u\not\in(\mathbb{R}^\times)^2
    \end{array}\right.
  $$
Composing this with the orientation character we find that $ST(n)$ is
orientable if and only if $n$ is odd. This is 
  the behaviour of orientability for real projective spaces. 
\item In characteristic $p$, every element is a sum of
  squares. Therefore, $\langle T\rangle$ lies in the image of
  $\mathbf{NQ}$. Hence, in characteristic $p$, $ST(n)$ is always
  orientable. 
\end{enumerate}
\end{example}

\subsection{Reducing spherical fibrations to vector bundles}
We comment on the first obstruction for spherical fibrations to be the
induced spherical fibrations of vector bundles. 

\begin{proposition}
We have an isomorphism of strongly $\Ao$-invariant sheaves of groups 
$$
\pi_0^{\Ao}(G/O)\cong \mathbf{NQ}.
$$
In particular, the space $G/O$ is \'etale $\Ao$-connected but not
necessarily $\Ao$-connected. 

Let $x\in G/O(\Spec k)$ be in the component of the
trivial spherical fibration. Then there is a surjection
$$
\pi_1^{\Ao}(G/O,x)\rightarrow 2\mathbb{G}_m\rightarrow 0.
$$
\end{proposition}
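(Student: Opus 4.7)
The plan is to apply the long exact sequence of $\Ao$-homotopy sheaves associated to the homotopy fibre sequence
$$
G/O(n)\longrightarrow L_{\Ao}BGL_n\stackrel{J^{\Ao}(n)}{\longrightarrow}\classify{S^{2n,n}}.
$$
Since $L_{\Ao}BGL_n$ is $\Ao$-connected, the tail of this sequence reads
$$
\pi_1^{\Ao}(BGL_n)\stackrel{\pi_1J}{\longrightarrow}\pi_1^{\Ao}\classify{S^{2n,n}}\longrightarrow \pi_0^{\Ao}(G/O)\longrightarrow 1.
$$
By the previous proposition the first arrow is the map $\Gm\to \mathbf{GW^\times}$ sending $u$ to $\langle u\rangle$; since $\langle u\rangle$ depends only on the square class of $u$, this factors as $\Gm\twoheadrightarrow \Gm/2\hookrightarrow \mathbf{GW^\times}$, where the second map is precisely the inclusion appearing in the short exact sequence $1\to \Gm/2\to \mathbf{GW^\times}\to \mathbf{NQ}\to 1$ from \prettyref{prop:nq}. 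Hence the cokernel of $\pi_1J$ is $\mathbf{NQ}$, giving $\pi_0^{\Ao}(G/O)\cong\mathbf{NQ}$; strict $\Ao$-invariance is then inherited from \prettyref{thm:sa1} via \prettyref{prop:nq}.

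\'Etale $\Ao$-connectedness then follows by observing that over any separably closed field $K$ one has $W(K)=\mathbb{Z}/2$, hence $I(K)_{\tor}=0$ and $\mathbf{NQ}(K)=1$; conversely, non-trivial Witt torsion over fields such as $\mathbb{Q}$ makes $\mathbf{NQ}$ non-trivial globally, so $G/O$ need not be $\Ao$-connected.

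For the second assertion I would extend the long exact sequence one step further to
$$
\pi_2^{\Ao}\classify{S^{2n,n}}\longrightarrow \pi_1^{\Ao}(G/O,x)\longrightarrow \Gm\stackrel{u\mapsto\langle u\rangle}{\longrightarrow}\mathbf{GW^\times}.
$$
By exactness the image of the middle arrow equals the kernel of $u\mapsto\langle u\rangle$. Since the square-class morphism $\Gm/2\hookrightarrow\mathbf{GW^\times}$ is injective by \prettyref{prop:units}, this kernel is precisely the subsheaf of squares, i.e., the image of the squaring homomorphism $\Gm\to\Gm$ denoted $2\Gm$. This yields the announced surjection $\pi_1^{\Ao}(G/O,x)\to 2\Gm\to 0$.

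The main obstacle is essentially bookkeeping: one needs $L_{\Ao}BGL_n$ to be $\Ao$-connected with $\pi_1^{\Ao}=\Gm$, the long exact sequence of $\Ao$-homotopy sheaves to be valid for the homotopy fibre sequence at hand, and the explicit identification of $\pi_1J$ recorded in the preceding proposition. All genuinely computational input has already been assembled in \prettyref{thm:sa1}, \prettyref{prop:units} and \prettyref{prop:nq}, so no further heavy lifting is required.
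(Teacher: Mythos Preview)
Your argument is correct and follows essentially the same route as the paper: both use the long exact homotopy sequence of the fibre sequence $G/O\to BGL_n\to\classify{S^{2n,n}}$, identify $\pi_1J$ with $u\mapsto\langle u\rangle$ via the preceding proposition, and read off $\pi_0^{\Ao}(G/O)\cong\mathbf{NQ}$ as the cokernel and the surjection onto $2\Gm$ as the kernel. You even supply the justification for \'etale $\Ao$-connectedness (vanishing of $\mathbf{NQ}$ over separably closed fields) that the paper leaves implicit; the only cosmetic point is that the injectivity of $\Gm/2\hookrightarrow\mathbf{GW^\times}$ is more directly a consequence of \prettyref{prop:nq} than of \prettyref{prop:units}, which concerns $W^\times$ rather than $GW^\times$.
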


\begin{proof}
The part concerning $\pi_0^{\Ao}$ and $\pi_1^{\Ao}$ of the long exact
sequence associated to the fibre sequence 
$$
G/O\rightarrow BGL_n\rightarrow \classify{S^{2n,n}}
$$
is exactly the exact sequence of \prettyref{prop:nq}. This implies
the result on $\pi_0$,  noting that both $BGL_n$ and
$\classify{S^{2n,n}}$ are $\Ao$-connected. 

The result on $\pi_1$ also follows from the long exact sequence,
noting that $\mathbb{G}_m\cong \pi_1^{\Ao}BGL_n\rightarrow
\mathbf{GW^\times}$ factors through the quotient $\mathbb{G}_m/2$. 
\end{proof}

\begin{proposition}
\label{prop:curve}
Let $C$ be a smooth curve.
Then we have 
$$
[C,\classify{S^{2n,n}}]_{\Ao}\cong[C,B\mathbf{GW^\times}]_{\Ao}\cong
H^1_{\Nis}(C,\mathbf{GW^\times}).
$$
\end{proposition}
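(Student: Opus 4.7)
The plan is to use the low Nisnevich cohomological dimension of a smooth curve to collapse $\classify{S^{2n,n}}$ to its first Postnikov section, and then invoke the standard identification of first Nisnevich cohomology with isomorphism classes of torsors. Recall that, by a theorem of Nisnevich, the Nisnevich cohomological dimension of a Noetherian scheme agrees with its Krull dimension; in particular, $H^i_{\Nis}(C,\mathcal{F})=0$ for $i\geq 2$ and every sheaf of abelian groups $\mathcal{F}$.

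The space $\classify{S^{2n,n}}$ is $\Ao$-local by the previous proposition, it is simplicially connected, and by the computation above its $\Ao$-fundamental sheaf is the strictly $\Ao$-invariant sheaf $\mathbf{GW^\times}$. I would then appeal to Morel's results ensuring that the higher $\Ao$-homotopy sheaves of an $\Ao$-local, $\Ao$-connected simplicial sheaf with strongly $\Ao$-invariant $\pi_1^{\Ao}$ are themselves strictly $\Ao$-invariant. This yields an $\Ao$-Postnikov tower
$$
\classify{S^{2n,n}}\longrightarrow\cdots\longrightarrow P^{(m)}\longrightarrow P^{(m-1)}\longrightarrow\cdots\longrightarrow P^{(1)}=B\mathbf{GW^\times},
$$
in which each stage $P^{(m)}\to P^{(m-1)}$ is a principal fibration with fibre the Eilenberg--MacLane sheaf $K(\pi_m^{\Ao}\classify{S^{2n,n}},m)$.

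Standard obstruction theory for such principal fibrations then says that lifting a class $[C\to P^{(m-1)}]_{\Ao}$ to $[C\to P^{(m)}]_{\Ao}$ is controlled by Nisnevich cohomology groups $H^{m+1}_{\Nis}(C,\pi_m^{\Ao}\classify{S^{2n,n}})$ (obstructions) and $H^m_{\Nis}(C,\pi_m^{\Ao}\classify{S^{2n,n}})$ (indeterminacies), both of which vanish for $m\geq 2$ by the cohomological dimension bound on $C$. Assembling inductively gives the first isomorphism $[C,\classify{S^{2n,n}}]_{\Ao}\cong[C,B\mathbf{GW^\times}]_{\Ao}$. The second isomorphism $[C,B\mathbf{GW^\times}]_{\Ao}\cong H^1_{\Nis}(C,\mathbf{GW^\times})$ is the standard identification of first Nisnevich cohomology with equivalence classes of Nisnevich-locally trivial $\mathbf{GW^\times}$-torsors; it is valid in the $\Ao$-local category precisely because $\mathbf{GW^\times}$ is strictly $\Ao$-invariant by \prettyref{thm:sa1}, so $B\mathbf{GW^\times}$ is automatically $\Ao$-local.

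The main delicate point is making the $\Ao$-Postnikov tower and the attendant obstruction calculus fully rigorous in the $\Ao$-local simplicial sheaf setting: one must confirm not only that the higher homotopy sheaves $\pi_m^{\Ao}\classify{S^{2n,n}}$ are strictly $\Ao$-invariant (so the Eilenberg--MacLane fibres are $\Ao$-local), but also that the usual obstruction sequences for principal fibrations of simplicial Nisnevich sheaves behave as expected. Once this is in place, everything else is formal.
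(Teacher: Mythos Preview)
Your proposal is correct and follows essentially the same approach as the paper: use the $\Ao$-Postnikov tower of $\classify{S^{2n,n}}$, invoke obstruction theory, and kill the relevant obstruction and indeterminacy classes using the fact that the Nisnevich cohomological dimension of a smooth curve is $1$. The paper's argument is terser and does not spell out the strict $\Ao$-invariance of the higher homotopy sheaves or the precise indexing, but the content is the same.
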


\begin{proof}
The last bijection is basically the definition of
$B\mathbf{GW^\times}$. 

The first bijection is obstruction theory: the space 
$B\mathbf{GW^\times}$ is the first Postnikov section of
$\classify{S^{2n,n}}$. The obstruction and lifting classes for lifting
a morphism $f:C\rightarrow \classify{S^{2n,n}}^{(i)}$ to the next
stage of the Postnikov tower lie in $H^{i+1}_{\Nis}$
resp. $H^{i+2}_{\Nis}$ of $C$ with coefficients in
$\pi_{i+1}^{\Ao}\classify{S^{2n,n,}}$. But these cohomology groups
vanish for reasons of cohomological dimension. Therefore, any morphism
$f\in [C,B\mathbf{GW}^\times]_{\Ao}$ has a (up to $\Ao$-homotopy)
unique extension to $\classify{S^{2n,n,}}$. 
\end{proof}

\begin{corollary}
We have 
$$
[\mathbb{P}^1,\classify{S^{2n,n}}]_{\Ao}\cong \mathcal{S}\oplus
W(k)_{\tor}^{(1)}. 
$$
\end{corollary}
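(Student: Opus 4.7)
The plan is to combine \prettyref{prop:curve} with the sphere cohomology computation \prettyref{prop:spheres}, using the standard identification $\mathbb{P}^1\simeq S^1\wedge\mathbb{G}_m$ in the $\Ao$-homotopy category.

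First, I would invoke \prettyref{prop:curve} with $C=\mathbb{P}^1$ to reduce the problem from a set of $\Ao$-homotopy classes to a Nisnevich cohomology group:
$$
[\mathbb{P}^1,\classify{S^{2n,n}}]_{\Ao}\cong H^1_{\Nis}(\mathbb{P}^1,\mathbf{GW^\times}).
$$
This is the step that uses the fact that $\classify{S^{2n,n}}$ has vanishing higher Postnikov obstruction groups when evaluated on a smooth curve.

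Second, I would use the Morel--Voevodsky $\Ao$-weak equivalence $\mathbb{P}^1\simeq S^1\wedge\mathbb{G}_m$, which combined with the strict $\Ao$-invariance of $\mathbf{GW^\times}$ from \prettyref{thm:sa1} yields
$$
H^1_{\Nis}(\mathbb{P}^1,\mathbf{GW^\times})\cong H^1_{\Nis}(S^1\wedge\mathbb{G}_m,\mathbf{GW^\times}).
$$
Then \prettyref{prop:spheres} with $i=p=q=1$ identifies this with $(\mathbf{GW^\times})_{-1}(k)\cong \mathcal{A}\oplus W(k)_{\tor}^{(1)}$, where $\mathcal{A}$ is the group denoted $\mathcal{S}$ in the statement (i.e.\ $\mathbb{Z}/2$ or $0$ according to whether the uniformizer $T\in k(T)$ is not or is a sum of squares, as computed in \prettyref{prop:contract}).

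There is no real obstacle here: all three ingredients are already in place, and the corollary follows by a short chain of isomorphisms. The one item worth a remark is the notational reconciliation between the symbols $\mathcal{S}$ (used in the corollary) and $\mathcal{A}$ (used in \prettyref{prop:contract} and \prettyref{prop:spheres}) for the $\mathbb{Z}/2$- or $0$-summand, which is purely cosmetic and is settled by tracing the conventions through the definitions.
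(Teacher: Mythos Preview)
Your proposal is correct and follows exactly the paper's own argument: apply \prettyref{prop:curve} to reduce to $H^1_{\Nis}(\mathbb{P}^1,\mathbf{GW^\times})$, then invoke \prettyref{prop:spheres} with $p=q=i=1$ via the identification $\mathbb{P}^1\simeq S^1\wedge\mathbb{G}_m$. Your remark on the $\mathcal{S}$ versus $\mathcal{A}$ notation is apt and is indeed only a cosmetic reconciliation.
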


\begin{proof}
This follows directly from \prettyref{prop:curve} and
\prettyref{prop:spheres}. 
\end{proof}

Therefore, we can explicitly describe all spherical fibrations over
$\mathbb{P}^1$. They are given by glueing two trivial spherical
fibrations over $\mathbb{P}^1\setminus\{0\}$ and
$\mathbb{P}^1\setminus\{\infty\}$ along a transition morphism
$\mathbb{G}_m\rightarrow \haut{S^{2n,n}}$. 

In particular, there exist spherical fibrations over $\mathbb{P}^1$
that are not associated to vector bundles. By \prettyref{prop:nq},
these are classified exactly by
$$
H^1_{\Nis}(\mathbb{P}^1,\mathbf{NQ})\cong H^0_{\Nis}(\Spec
k,(\mathbf{NQ})_{-1})\cong W(k)_{\tor}^{(1)}/\mathcal{S}.
$$

\begin{example}
Cazanave has described the automorphisms of $\mathbb{P}^1$ in
\cite{cazanave}. As a consequence of the results of \cite{cazanave}, the
rational function 
$$
\frac{X^3-\left(\frac{a_3}{a_2}+\frac{a_2}{a_1}\right)X}
{a_1X^2-\frac{a_1a_3}{a_2}}
$$
is the endomorphism associated to the quadratic form $\langle
a_1\rangle\oplus\langle a_2\rangle\oplus \langle a_3\rangle$ in $GW(k)$. 

In particular, for $k=\mathbb{Q}_p$ and $u$ any square class, the rational
function 
$$
\frac{X^3-\left(T+u\right)X}
{X^2-T}
$$
has associated quadratic form $1\oplus\langle T\rangle\langle
u\rangle\oplus\langle u\rangle$. Therefore, the above rational function is
an endomorphism of $\mathbb{P}^1$ which is invertible up to
$\Ao$-homotopy, but not $\Ao$-homotopic to a fractional linear
transformation. Tracing through our identifications
\prettyref{prop:spheres} and \prettyref{lem:contract}, we find that
gluing two trivial spherical fibrations over
$\mathbb{P}^1\setminus\{0\}$ and $\mathbb{P}^1\setminus\{\infty\}$
along the $\mathbb{G}_m$-family of endomorphisms given by the rational
functions  
$$
\frac{X^3-\left(T+u\right)X}
{X^2-T}
$$
provides a spherical fibration over $\mathbb{P}^1$ whose associated
class in $H^1_{\Nis}(\mathbb{P}^1,\mathbf{GW^\times})$ is
non-trivial. Similar examples can be given using quadratic forms over
$\mathbb{Q}$. 
\end{example}

The spherical fibrations above provide examples of exotic Poincar{\'e}
duality structure on $\mathbb{P}^1$. In the case of the base field
$\mathbb{Q}$, these structures are indistinguishable from the standard
Poincar{\'e} duality on $\mathbb{P}^1$ in real and complex realizations,
in the category of motives and in the $\mathbb{Z}[1/2]$-localized
stable $\Ao$-homotopy. In particular, there are several spherical fibrations
inducing the standard Poincar{\'e} duality, but only one of them
is induced from a vector bundle. The notion of Poincar{\'e} 
duality space, should it ever take form in $\Ao$-homotopy, necessarily
has to include the $2$-torsion information from the Witt group to
prevent this sort of pathological behaviour.

\end{document}